\newtheorem{theorem}{Theorem}[section]
\newtheorem{conjecture}[theorem]{Conjecture}
\newtheorem{rtheorem}[theorem]{Reduction Theorem}
\newtheorem{lemma}[theorem]{Lemma}
\newtheorem{corollary}[theorem]{Corollary}
\newtheorem{problem}[theorem]{Problem}
\newtheorem{remark}[theorem]{Remark}
\newtheorem{proposition}[theorem]{Proposition}
\newtheorem{example}[theorem]{Example}
\newcommand{\sh}{\sqcup \mspace{-7.0mu} \sqcup}
\numberwithin{equation}{section}
\title{\bf On the support of the free Lie algebra: the Sch\"utzenberger problems \\}
\author{\bf Ioannis C. Michos \thanks{E-mail addresses: michos@ucy.ac.cy, \, yannis\_michos@yahoo.co.uk}\\ \\
\small{Department of Mathematics and Statistics, University of Cyprus}, \\
\small{P.O. Box 20537, CY-1678, Nicosia, Cyprus}}
\date{}
\begin{document}
\maketitle
\begin{abstract}

M.-P. Sch\"utzenberger asked to determine the support of the free Lie algebra ${\mathcal L}_{{\mathbb Z}_{m}}(A)$ on a finite alphabet $A$
over the ring ${\mathbb Z}_{m}$ of integers $\bmod \, m$ and all pairs of {\em twin} and {\em anti-twin} words,
\textit{i.e.}, words that appear with equal (resp. opposite) coefficients in each Lie polynomial.
We characterize the complement of the support of ${\mathcal L}_{{\mathbb Z}_{m}}(A)$ in $A^{*}$ as the set of words $w$ such that 
$m$ divides all the coefficients appearing in the monomials of $l^{*}(w)$, where $l^{*}$ is the adjoint endomorphism of the left normed Lie
bracketing $l$ of the free Lie ring. 
This can be rephrased, for words of length $n$, in terms of the action of the left normed multi-linear Lie bracketing $l_{n}$ of
${\mathcal L}_{{\mathbb Z}_{m}}(A)$ - viewed as an element of the group ring of the symmetric group on $n$ letters - on $\lambda$-tabloids,
where $\lambda$ is a partition of $n$.
Calculating $l^{*}(w)$ via all factors of $w$ of fixed length and the shuffle product,
we recover the result of Duchamp and Thibon $(1989)$ for the support of the free Lie ring in a much more natural way.
We conjecture that two words $u$ and $v$ of common length $n > 1$ which lie in the support of the free Lie ring are twin
(resp. anti-twin) if and only if either $u = v$ or $n$ is odd and $u = \tilde{v}$ (resp. $n$ is even and $u = \tilde{v}$), where $\tilde{v}$ denotes the reversal of the word $v$, and we prove that it suffices to show this only in the case where $|A| = 2$.
Representing a word $w$ in two letters by the subset $I$ of $[n] = \{1, 2, \ldots , n \}$ consisting of the positions that one of the letters
occurs in $w$, the computation of $l^{*}(w)$ leads us to the notion of the {\em Pascal descent polynomial} $p_{n}(I)$, a particular commutative multi-linear polynomial which is equal to a signed binomial coefficient when $|I| = 1$. We provide a recursion formula for it and show that
if $m \nmid \sum_{i \in I} (-1)^{i-1} {{n-1} \choose {i-1}}$ then $w$ lies
in the support of ${\mathcal L}_{{\mathbb Z}_{m}}(A)$.

\medskip

{\bf Keywords:} {Free Lie algebras, Pascal triangle $\bmod \, m$, shuffle product, set partitions, $\lambda$-tabloids}
\end{abstract}

\section{Introduction}\label{intro}

Let $A$ be a finite alphabet, $A^{*}$ be the {\em free monoid} on $A$ and $A^{+} = A^{*} \setminus \{ \epsilon \}$ be the {\em free semigroup}
on $A$, with $\epsilon$ denoting the empty word. For a word $w \in A^{*}$ let $|w|$ denote its length,
$|w|_{\!a}$ denote the number of occurrences of the letter $a \in A$ in $w$ and let $alph(w)$ be
the set of letters actually occurring in $w$.

Let $K$ be a commutative ring with unity. For most of our purposes $K = {\mathbb Z}_{m}$, the ring ${\mathbb Z}/(m)$ of integers $\bmod \, m$
for a non-negative integer $m$. Let $K {\langle A \rangle}$ be the {\em free associative algebra} on $A$ over $K$.
Its elements are the polynomials on non-commuting variables from $A$ and coefficients from $K$.
Each polynomial $P \in K {\langle A \rangle}$ is written in the form $P = \sum_{w \in A^{*}} \, (P, w) \, w$, where
$(P, w)$ denotes the coefficient of the word $w$ in $P$.
Given two polynomials $P, Q \in K {\langle A \rangle}$, their Lie product is the Lie bracket
$[P,Q] = PQ - QP$. In this way $K {\langle A \rangle}$ is given a Lie structure. The {\em free Lie algebra}
${\mathcal L}_{K}(A)$ on $A$ over $K$ is then equal to the Lie subalgebra of $K {\langle A \rangle}$ generated by $A$.
When $K$ is the ring of rational integers $\mathbb Z$, ${\mathcal L}_{K}(A)$ is also known as the {\em free Lie ring}.
A {\em Lie monomial} is an element of ${\mathcal L}_{K}(A)$ formed by Lie products of the elements $a \in A$.
A {\em Lie polynomial} is a linear combination of Lie monomials, \textit{i.e.}, an arbitrary element of ${\mathcal L}_{K}(A)$.
The {\em support} of ${\mathcal L}_{K}(A)$ is the subset of ${A}^{*}$ consisting of those
words that appear (with a nonzero coefficient) in some Lie polynomial.
A pair of words $u, \, v$ is called {\em twin} (respectively {\em anti-twin}) if both words
appear with equal (respectively opposite) coefficients in each Lie polynomial over $K$.

M.-P. Sch\"utzenberger had posed the following problems (private communication with G. Duchamp):

\begin{problem}\label{suppfreeLieZ}
Determine the support of the free Lie ring ${\mathcal L}_{\mathbb Z}(A)$.
\end{problem}

\begin{problem}\label{suppfreeLieZm}
Determine the support of ${\mathcal L}_{{\mathbb Z}_{m}}(A)$, for $m > 1$.
\end{problem}

\begin{problem}\label{twinantiZ}
Determine all the twin and anti-twin pairs of words with respect to ${\mathcal L}_{\mathbb Z}(A)$.
\end{problem}

\begin{problem}\label{twinantiZm}
Determine all the twin and anti-twin pairs of words with respect to ${\mathcal L}_{{\mathbb Z}_{m}}(A)$, for $m > 1$.
\end{problem}

In view of these problems Sch\"utzenberger considered, for each word $w \in {A}^{*}$, the smallest non-negative integer - which we denote by
$c(w)$ - that appears as a coefficient of $w$ in some Lie polynomial over $\mathbb Z$. For each non-negative integer $m$ he also defined
and tried to characterize the language $L_{m}$ of all words with $c(w) = m$; considering, in particular, the cases $m = 0$ and
$m = 1$ (see \cite[\S 1.6.1]{Reut}).

For $m = 0$ the language $L_{0}$ is clearly equal to the complement of the support of the free Lie ring
${\mathcal L}_{\mathbb Z}(A)$ in $A^{*}$, since a word $w$ does not appear in any Lie polynomial over
$\mathbb Z$ if and only if $c(w) = 0$.
Duchamp and Thibon gave a complete answer to Problem \ref{suppfreeLieZ} in \cite{Duch + Thib} and proved that
$L_{0}$ consists of all words $w$ which are either a power $a^{n}$ of a letter $a$,
with exponent $n > 1$, or a {\em palindrome} (\textit{i.e.}, a word $u$ equal to its {\em reversal}, denoted by
$\tilde{u}$) of even length.
The non-trivial part of their work was to show that each word not of the previous form lies in the support of
${\mathcal L}_{\mathbb Z}(A)$ and this was achieved by a construction of an ad hoc family of Lie polynomials.
This result was extended in \cite{Duch + Laug + Luqu} - under certain conditions - to {\em traces},
\textit{i.e.}, partially commutative words (see \cite{Diek + Rozen} for an exposition of trace theory) instead of
noncommutative ones, and the corresponding free partially commutative Lie algebra
(also known as {\em graph Lie algebra}).

For $m = 1$ all {\em Lyndon words} on $A$ (for more on this subject see e.g., \cite[\S 5.1 and \S 5.3]{Loth}) lie in $L_{1}$ since the
element $P_w$ of the {\em Lyndon basis} of ${\mathcal L}_{\mathbb Z}(A)$ that corresponds to the {\em standard factorization} of a given
Lyndon word $w$ is equal to $w$ plus a linear combination of greater words - with respect to the {\em lexicographic ordering} in $A^{+}$
- of the same length as $w$ \cite[Lemma 5.3.2]{Loth}. On the other hand, there exist non Lyndon words which also lie in $L_{1}$.
For example, one can check that the word $a^2b^2a$ - which is clearly non Lyndon as it starts and ends with the same letter - appears with
coefficient equal to $-1$ in the Lie monomial $P_{a^3b^2} \: = \: [\, a, \, [\, a, \, [\, [\, a, \, b \,], \, b \,] \,] \, ]$ and
therefore $(- P_{a^3b^2}, \, a^2b^2a) = 1$.

\smallskip

In Section \ref{prelim} we relate Problems \ref{suppfreeLieZ} up to \ref{twinantiZm} with the notion of the adjoint endomorphism
$l^{*}$ of the left normed Lie bracketing $l$ of the free Lie algebra ${\mathcal L}_{K}(A)$ over $K$.
Our starting point is the simple idea that a word $w$ does not lie in the support of ${\mathcal L}_{K}(A)$ if and only if $l^{*}(w) = 0$ and
a pair $u, \, v$ of words is twin (respectively anti-twin) if and only if $l^{*}(u) = l^{*}(v)$ (respectively $l^{*}(u) = - \, l^{*}(v)$).
We also show that $c(w)$ is either zero or is equal to the greatest common divisor of the coefficients of the
monomials appearing in $l^{*}(w)$, for the left normed Lie bracketing $l$ of the free Lie ring.
It turns out that it is also equal to the greatest common divisor of the coefficients in the expression of $l^{*}(w)$ as a linear combination of
the images of the Lyndon words of length $|w|$ under $l^{*}$.

Considering the natural projection from $\mathbb Z$ onto ${\mathbb Z}_{m}$ for $m \neq 1$, we show that the complement of
the support of ${\mathcal L}_{{\mathbb Z}_{m}}(A)$ is identified with the language ${\overline{L}}_{m}$
of all words $w$ with $m \, | \, c(w)$.
For Problem \ref{twinantiZ} we conjecture that two words $u, v$ of common length $n$ that do not lie in the support of the
free Lie ring, \textit{i.e.}, they are not $n$-th powers of a letter with $n > 1$ or palindromes of even length,
are twin if either $u = v$ or $n$ is odd and $u = \tilde{v}$ and are anti-twin if
$n$ is even and $u = \tilde{v}$. We also show that it suffices to prove this over an alphabet of two letters.

\smallskip

In Section \ref{l*shuffle} we calculate the polynomial $l^{*}(w)$ recursively in terms of
all factors $u$ of fixed length $r \geq 1$ of $w$ and the {\em shuffle product} of words (see \cite[\S 1.4]{Reut} for a definition) as
\begin{equation*}
l^{*}(w) \: = \: \sum_{{w = sut} \atop {|u|=r}} \, l^{*}(u) \, (-1)^{|s|} \, \{ \tilde {s} \, \sh \, \,  t \}
\end{equation*}
and use this to recover naturally the result of Duchamp and Thibon in \cite{Duch + Thib}.
Furthermore, if $w$ lies in the kernel of $l^{*}$ over $K$ we show that $|alph(w)| \leq \lceil |w|/2 \rceil$.
Applying this for $K = {\mathbb Z}_{m}$ for all $m \neq 1$ we obtain, as a corollary, the fact that all words $w$ with
$|alph(w)| > \lceil |w|/2 \rceil$ have $c(w) = 1$ and therefore lie in $L_{1}$, just as Lyndon words do.

\smallskip

In Section \ref{lnsymm} Problems \ref{suppfreeLieZ} up to \ref{twinantiZm} boil down to particular combinatorial questions
on the group ring $K {\mathfrak S}_{n}$ of the symmetric group ${\mathfrak S}_{n}$ on $n$ letters.
Let $[n]$ denote the set $\{ 1, 2, \ldots , n \}$ and fix an ordered sub-alphabet $B = \{ a_{1}, a_{2}, \ldots a_{r} \}$ of $A$.
The main idea is to view a word $w$ of length $n$ on $B$ as an ordered set partition of $[n]$ denoted by
$\{ w \} = (I_{1}(w), I_{2}(w), \ldots , I_{r}(w))$, where for each $k \in [r]$ the set $I_{k}(w)$ consists of the positions of $[n]$
in which the letter $a_k$ occurs in $w$. If $\lambda = ({\lambda}_{1}, {\lambda}_{2}, \ldots , {\lambda}_{r})$ is the multi-degree of $w$
then $\{ w \}$ is just a $\lambda$-tabloid, where $\lambda$ may, without loss of generality, assumed to be an integer partition of $n$.
The role of the reversal $\tilde{w}$ of a word $w$ is played by the tabloid ${{\tau}_{n}} \cdot \{ w \}$, where ${\tau}_{n}$ is the involution
$\prod_{i=1}^{k} \, (i, \, n-i+1)$ of ${\mathfrak S}_{n}$ with $k = {\left\lfloor n/2 \right\rfloor}$.
Viewing each permutation as a word in $n$ distinct letters, the left normed  multi-linear Lie bracketing
$l_{n} = l(x_1 x_2 \cdots x_n)$ and its adjoint $l^{*}_{n} = l^{*}(x_1 x_2 \cdots x_n)$
can be viewed as elements of the group ring $K {\mathfrak S}_{n}$.
The right permutation action of $l^{*}_{n}$ on words is then equivalent to the left natural action of $l_n$
on tabloids; in particular $w \cdot l^{*}_{n} \: = \: 0$ if and only if $l_{n} \cdot \{w\} \: = \: 0$.
In this way all previous results and problems translate to corresponding
problems on tabloids. In particular, Problem \ref{suppfreeLieZm} boils down to the problem of finding all $\lambda$-tabloids $t$
that satisfy the equation $l_{n} \cdot t = 0$ in the group ring ${\mathbb Z}_{m}{\mathfrak S}_{n}$.

\smallskip

One can say more for words where only two letters occur since an ordered partition with two parts is determined by
the subset $I = \{i_{1}, i_{2}, \ldots , i_{s} \}$ of $[n]$ that appears in its second row, and is denoted accordingly by $\overline{I}$.
We map $\overline{I}$ to the monomial $x_I = x_{i_{1}} x_{i_{2}} \cdots x_{i_{s}}$ in $n$ commuting variables
$x_1 , x_2 , \ldots , x_n$ and extending this by linearity the element $l_n \cdot \overline{I}$ can be viewed as
a multi-linear polynomial $h_{n}(I)$ of total degree $s$. In this way we can view all Sch\"utzenberger problems in a commutative algebra
setting.
We show that $h_{n}(I)$ is a multiple of the binomial $x_1 - x_2$.
The corresponding quotient $p_{n}(I)$ is called {\em Pascal descent polynomial} and is in many aspects a generalization of the notion of
the signed binomial coefficient. We study the polynomials $p_{n}(I)$ in Section \ref{pascal} and give a recursion formula for their calculation.
Problem \ref{suppfreeLieZm} is then equivalent to determining all subsets $I$ of $[n]$ such that $p_{n}(I) \equiv 0 \, (\bmod \, m )$.
The latter leads us to an explicitly stated sufficient condition  for a word $w$ to lie in the support
of the free Lie algebra ${\mathcal L}_{{\mathbb Z}_{m}}(A)$: namely $m \nmid N_{n}(I)$, where $n = |w|$,
$I = I(w)$ is the subset of $[n]$ consisting of the positions that one of the two letters occurs in $w$ and
$\displaystyle N_{n}(I) \, = \, \sum_{i \in I} (-1)^{i-1} {{n-1} \choose {i-1}}$. This actually means that the signed sum of the entries
appearing at the positions corresponding to the subset $I$ in the $n$-th row (starting to count from $n = 0$) of the {\em Pascal triangle}
$\bmod \, m$ has to be different from zero $\bmod \, m$.
Similar necessary conditions are obtained for twin and anti-twin words with respect to ${\mathcal L}_{{\mathbb Z}_{m}}(A)$.
Finally our conjecture for twin and anti-twin pairs in the free Lie ring is equivalent to showing that when
$p_{n}(I) \neq 0$ and $p_{n}(J) \neq 0$, then $p_{n}(I) = p_{n}(J)$ if and only if $I = J$ or $I = {{\tau}_{n}}(J)$
and $n$ is odd and $p_{n}(I) = - \, p_{n}(J)$ if and only if $I = {{\tau}_{n}}(J)$ and $n$ is even.

\bigskip


\section{Preliminary results}\label{prelim}

The set of all polynomials $K {\langle A \rangle}$ becomes a non-commutative associative algebra with the
usual {\em concatenation} product defined as
\begin{equation}
(PQ, \, w) \: = \: \sum_{w = uv} (P,u)(Q,v) \, ,
\end{equation}
and a commutative associative algebra with the {\em shuffle product} that is initially defined for words as
${\epsilon} \, \sh \: w  \: = \: w  \, \sh \, {\epsilon} \: = \: w$, if at least one of them is the empty word $\epsilon$, and recursively as
\begin{equation}
(au') \, \sh \, (bv') \: = \: a(u' \, \sh \, (bv')) \, + \, b((au') \, \sh \, v')  \, , \label{recdefshuffle}
\end{equation}
if $u = au'$ and $v = bv'$ with $a, b \in A$ and $u', v' \in A^{*}$ (see \cite[(1.4.2)]{Reut}).
Elements of the form $u \, \sh \, v$ with $u, v \in A^{+}$ are called {\em proper shuffles}.
The definition of the shuffle product is then extended linearly to the whole of $K {\langle A \rangle}$.

\smallskip

The {\em left normed Lie bracketing} of a word is the Lie polynomial defined recursively as
\begin{equation}
l({\epsilon}) \, = \, 0 \, , \qquad l(a) \, = \, a \, , \qquad \mbox{and} \qquad
l(ua) \, = \,  [l(u), \, a] \, ,
\end{equation}
for each $a \in A$ and $u \in {A}^{+}$.
One can extend $l$ linearly to $K {\langle A \rangle}$ and construct a linear map, denoted also by $l$, which maps
$K {\langle A \rangle}$ onto the free Lie algebra ${\mathcal L}_{K}(A)$, since the set
$\{ l(u) \: : \: u \in A^{*} \}$ is a well known linear generating set of ${\mathcal L}_{K}(A)$ (see e.g. \cite[\S 0.4.1]{Reut}).

Given two polynomials $P, Q \in K {\langle A \rangle}$ there is a canonical scalar product defined as
\begin{equation}
(P,Q) \: = \: \sum_{w \in {A}^{\, *}} (P,w)(Q,w) \, \label{scalarproduct}
\end{equation}
in the sense that it is the unique scalar product on $K {\langle A \rangle}$ for which ${A}^{*}$
is an orthonormal basis. The adjoint endomorphism $l^*$ of the left normed Lie bracketing $l$ is then defined
by the relation
\begin{equation}
({l^{*}}(u), v) \, = \, (l(v), u) \label{defl*}
\end{equation}
for any words $u$, $v$. The image of $l^{*}$ on a word of ${A}^{\, *}$ can also be effectively defined recursively
by the relations
\begin{equation}
{l^{*}}({\epsilon}) \, = \, 0 \, , \qquad {l^{*}}(a) \, = \, a \, , \qquad \mbox{and} \qquad
{l^{*}}(aub) \: = \: {l^{*}}(au)\,b - {l^{*}}(ub)\,a \, , \label{defrecl*}
\end{equation}
where $a, b \in A$ and $u \in {A}^*$ (cf. \cite[Problem 5.3.2]{Loth}). The proof goes by induction on the length of the given word,
just as in the case of the adjoint endomorphism of the right-normed Lie bracketing (discussed in detail in
\cite[pp. 32 - 33]{Reut}). The reason we choose to work with the left normed one is that there exist well known formulae for the left normed
multi-linear Lie bracketing of the free Lie algebra; this will be discussed later on in Section \ref{lnsymm}.

\smallskip
\begin{lemma}\label{l*reversal}
Let $\tilde{w}$ denote the reversal of the word $w$. Then
\[ {l^{*}}(\tilde{w}) \, = \, (-1)^{|w| + 1} {l^{*}}(w). \]
\end{lemma}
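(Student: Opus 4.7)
The plan is to prove this by strong induction on $|w|$, using the recursive characterization (\ref{defrecl*}) of $l^{*}$. The base case $|w| = 1$ is immediate: if $w = a \in A$, then $\tilde{w} = w$ and $(-1)^{1+1} l^{*}(a) = l^{*}(a)$.

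For the inductive step, assume $|w| = n \geq 2$ and that the claim holds for every word of length less than $n$. Write $w = aub$ with $a, b \in A$ and $u \in A^{*}$ (possibly empty); then $\tilde{w} = b\tilde{u}a$. Applying (\ref{defrecl*}) to $\tilde{w}$ yields
\begin{equation*}
l^{*}(\tilde{w}) \, = \, l^{*}(b\tilde{u})\, a \, - \, l^{*}(\tilde{u}a)\, b.
\end{equation*}
Now $b\tilde{u} = \widetilde{ub}$ and $\tilde{u}a = \widetilde{au}$, so the inductive hypothesis applied to the words $ub$ and $au$ of length $n-1$ gives $l^{*}(b\tilde{u}) = (-1)^{n} l^{*}(ub)$ and $l^{*}(\tilde{u}a) = (-1)^{n} l^{*}(au)$. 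Substituting and factoring out $(-1)^{n}$, the right-hand side becomes
\begin{equation*}
(-1)^{n} \bigl( l^{*}(ub)\, a - l^{*}(au)\, b \bigr) \, = \, -(-1)^{n} \bigl( l^{*}(au)\, b - l^{*}(ub)\, a \bigr) \, = \, (-1)^{n+1} l^{*}(aub),
\end{equation*}
where the last equality uses (\ref{defrecl*}) once more, this time applied to $w = aub$. This is exactly $(-1)^{|w|+1} l^{*}(w)$, completing the induction.

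There is no real obstacle here; the only minor point to watch is that the recursion (\ref{defrecl*}) is valid with $u = \epsilon$, which covers the case $|w| = 2$ within the inductive step without requiring a separate base case. The argument is essentially a symmetry between the two summands of the right-hand side of (\ref{defrecl*}) that gets swapped (with a sign) upon reversal.
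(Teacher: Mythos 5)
Your proof is correct and is precisely the argument the paper intends: the paper's own proof is just the one-line remark ``by the recursive formula (\ref{defrecl*}) and an easy induction on $|w|$,'' and your write-up fills in exactly that induction, including the observation that the $|w|=2$ case is absorbed into the inductive step by allowing $u=\epsilon$.
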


\begin{proof}
By the recursive formula (\ref{defrecl*}) and an easy induction on $|w|$.
\end{proof}


One can also extend $l^{*}$ linearly to the whole of $K {\langle A \rangle}$ and construct a linear endomorphism of $K {\langle A \rangle}$,
denoted also by $l^{*}$. What is of crucial importance for the Sch\"utzenberger problems is the kernel $\ker l^{*}$ of $l^{*}$.
Let ${{\mathcal L}_{K}(A)}^{\perp}$ denote the orthogonal complement of ${\mathcal L}_{K}(A)$ with respect to the scalar product
(\ref{scalarproduct}) in $K {\langle A \rangle}$. Then for an arbitrary commutative ring $K$ with unity the following two results hold.

\begin{lemma}\label{kerl*orth}
$\ker l^{*} = {{\mathcal L}_{K}(A)}^{\perp}$.
\end{lemma}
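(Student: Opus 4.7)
The plan is to establish the two inclusions separately; both follow essentially formally from the defining adjoint relation together with two facts already noted in the excerpt: the set $\{l(u) : u \in A^{*}\}$ linearly spans $\mathcal{L}_{K}(A)$, and $A^{*}$ is an orthonormal basis with respect to the scalar product $(\cdot,\cdot)$.

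First I would observe that by bilinearity and symmetry of $(\cdot,\cdot)$, the relation $(l^{*}(u),v) = (l(v),u)$ valid for words extends to
\begin{equation*}
(l^{*}(P), Q) \: = \: (l(Q), P) \: = \: (P, l(Q))
\end{equation*}
for all $P, Q \in K\langle A \rangle$. Moreover, since $A^{*}$ is orthonormal, the coefficient of a word $u$ in any polynomial $R$ is retrieved as $(R, u)$, so $R = 0$ if and only if $(R, u) = 0$ for every $u \in A^{*}$.

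For the inclusion $\ker l^{*} \subseteq \mathcal{L}_{K}(A)^{\perp}$, suppose $l^{*}(P) = 0$. Then for every word $u$, $(P, l(u)) = (l^{*}(P), u) = 0$. By linearity of the scalar product and the fact that $\{l(u) : u \in A^{*}\}$ generates $\mathcal{L}_{K}(A)$ as a $K$-module, we get $(P, L) = 0$ for every Lie polynomial $L$, i.e., $P \in \mathcal{L}_{K}(A)^{\perp}$. For the converse inclusion, suppose $P \in \mathcal{L}_{K}(A)^{\perp}$. Then $(P, l(u)) = 0$ for every word $u$, hence $(l^{*}(P), u) = (P, l(u)) = 0$ for every $u \in A^{*}$. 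By the orthonormality remark above this forces $l^{*}(P) = 0$, i.e., $P \in \ker l^{*}$.

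There is no genuine obstacle here; the only thing one has to be careful about is that $K$ is only assumed to be a commutative ring with unity, so one cannot invoke duality arguments that need fields. The proof avoids this by working directly with the basis $A^{*}$ and the fact that a polynomial vanishes iff each of its coefficients does — which is an identity in $K$, not a dimension argument.
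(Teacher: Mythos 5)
Your proof is correct and follows essentially the same route as the paper's: both arguments extend the adjoint relation $(l^{*}(P),u) = (P,l(u))$ by linearity, use that $A^{*}$ is an orthonormal basis to translate $l^{*}(P)=0$ into the vanishing of all coefficients $(l^{*}(P),u)$, and then invoke the fact that $\{l(u) : u \in A^{*}\}$ linearly generates ${\mathcal L}_{K}(A)$. The paper merely phrases the two inclusions as a single chain of equivalences, and your remark about avoiding field-dependent duality is consistent with how the paper handles a general commutative ring $K$.
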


\begin{proof}
A polynomial $P \in \ker l^{*}$ if and only if
$\Big( \sum_{w \in A^{*}} \, (P, w) \, l^{*}(w), \: u \Big) = \sum_{w \in A^{*}} \, (P, w) (l^{*}(w), u) = 0$,
for each $u \in A^{*}$. By (\ref{defl*}) and (\ref{scalarproduct}) the latter means that
$(P, \, l(u)) = 0$, for each $u \in A^{*}$. Since $\{ l(u) \, : \, u \in A^{*} \}$ is a $K$-linear generating set for ${\mathcal L}_{K}(A)$, this is equivalent to having $(P, Q) = 0$ for each $Q \in {\mathcal L}_{K}(A)$, which means precisely that $P \in {{\mathcal L}_{K}(A)}^{\perp}$.
\end{proof}

\begin{lemma}\label{kerl*supp}
Let $u, v, w \in A^{*}$. Then
\begin{description}
\item[\em{(i)}]
A word $w$ does not lie in the support of ${\mathcal L}_{K}(A)$ if and only if $w \in \ker l^{*}$.
\item[\em{(ii)}]
A pair of words $(u, v)$ is twin (respectively anti-twin) with respect to ${\mathcal L}_{K}(A)$ if and only if
the binomial $u - v$ (respectively $u + v$) $\in \ker l^{*}$.
\end{description}
\end{lemma}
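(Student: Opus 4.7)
The plan is to read off both parts of the lemma from the definitions of support, twin, and anti-twin pairs, and then to invoke the identification $\ker l^{*} = {\mathcal L}_{K}(A)^{\perp}$ established in Lemma \ref{kerl*orth}. In effect, each statement about coefficients of words in Lie polynomials is first translated into an orthogonality condition with respect to the scalar product (\ref{scalarproduct}), and orthogonality to every element of ${\mathcal L}_{K}(A)$ is exactly membership in $\ker l^{*}$.

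For part (i), I would argue as follows. By definition, $w$ fails to belong to the support of ${\mathcal L}_{K}(A)$ precisely when $(P, w) = 0$ for every Lie polynomial $P \in {\mathcal L}_{K}(A)$. Since $(P, w)$ is exactly the scalar product of $P$ with the basis word $w$, this condition says $w \in {\mathcal L}_{K}(A)^{\perp}$, and Lemma \ref{kerl*orth} then rewrites this as $w \in \ker l^{*}$. The converse direction is immediate by reading the same chain of equivalences backwards.

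For part (ii), the twin condition $(P, u) = (P, v)$ holding for every $P \in {\mathcal L}_{K}(A)$ is, by bilinearity of (\ref{scalarproduct}), equivalent to $(P, u - v) = 0$ for every $P \in {\mathcal L}_{K}(A)$; that is, $u - v \in {\mathcal L}_{K}(A)^{\perp} = \ker l^{*}$ by Lemma \ref{kerl*orth}. The anti-twin case is identical after replacing the difference $u - v$ by the sum $u + v$, using $(P, u) = -(P, v)$ iff $(P, u + v) = 0$.

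I do not expect any serious obstacle: the lemma is essentially a restatement of the definitions once Lemma \ref{kerl*orth} is available. The only small point worth writing carefully is that for the twin/anti-twin direction one must note that $\ker l^{*}$ is a $K$-submodule of $K{\langle A \rangle}$, so the binomials $u \pm v$ are legitimate test elements; this however is automatic from the linear extension of $l^{*}$ to all of $K{\langle A \rangle}$ mentioned just above the statement of the lemma.
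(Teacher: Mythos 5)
Your proposal is correct and follows essentially the same route as the paper: both parts are obtained by translating the definitions of support and twin/anti-twin pairs into orthogonality conditions $(P,w)=0$, respectively $(P,u\mp v)=0$, and then invoking Lemma \ref{kerl*orth} to identify ${\mathcal L}_{K}(A)^{\perp}$ with $\ker l^{*}$. Nothing further is needed.
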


\begin{proof}
(i) It follows from Lemma \ref{kerl*orth} and the fact that a word $w$ does not lie in the support of ${\mathcal L}_{K}(A)$ if and only if
$(Q, w) = 0$, for each Lie polynomial $Q$.

(ii) Suppose that $u$ and $v$ are twin words. By definition this
means that $(Q, u) = (Q, v)$, for every $Q \in {\mathcal L}_{K}(A)$, \textit{i.e.}, the binomial
$u - v \in {{\mathcal L}_{K}(A)}^{\perp}$ and the result follows from Lemma \ref{kerl*orth}.
An analogous argument is used for anti-twin pairs and the binomial $u + v$.
\end{proof}

\begin{remark}\label{propershuffle}
By a result originally due to Ree \cite{Ree} it is known (see \cite[Theorem 3.1\,(iv)]{Reut} and cf. \cite[Ex. 5.3.4]{Loth}) that if
$K$ is assumed to be a commutative $\mathbb Q$-algebra, where $\mathbb Q$ denotes the field of rational numbers, a polynomial with zero constant
term lies in ${{\mathcal L}_{K}(A)}^{\perp}$ - and hence not in the support of ${\mathcal L}_{K}(A)$ in view of Lemmas \ref{kerl*orth} and
\ref{kerl*supp} - if and only if it is a $K$-linear combination of proper shuffles.
\end{remark}

\begin{proposition}\label{l*Lyndon}
Let $K$ be any commutative ring with unity and $l_{1}, \, l_{2}, \, \ldots , \, l_{k}$ be the Lyndon words of length $n$ on the alphabet $A$.
Then the set $\{ l^{*}(l_{1}), \, l^{*}(l_{2}), \, \ldots \, , l^{*}(l_{k}) \}$ is a $K$-basis of the image under $l^{*}$ of
the $n$-th homogeneous component of $K {\langle A \rangle}$.
\end{proposition}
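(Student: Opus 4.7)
The plan is to prove linear independence and spanning of $\{l^{*}(l_{i})\}_{i = 1}^{k}$ separately over the arbitrary commutative ring $K$, bypassing dimension counting by exploiting the unitriangular expansion of the Lyndon basis in words. Enumerating the Lyndon words of length $n$ in increasing lexicographic order $l_{1} < l_{2} < \cdots < l_{k}$, the classical triangularity $P_{l_{j}} = l_{j} + \sum_{v > l_{j}, \, |v| = n} \alpha_{v}\, v$ (Lemma $5.3.2$ of \cite{Loth}) immediately gives $(l_{i}, P_{l_{j}}) = \delta_{ij}$ for $i \leq j$. Hence the $k \times k$ matrix $M = \bigl( (l_{i}, P_{l_{j}}) \bigr)_{i, j}$ is lower triangular with $1$'s on the diagonal, so $\det M = 1$ and $M$ is invertible over any $K$.

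For linear independence I would suppose $\sum_{i} c_{i}\, l^{*}(l_{i}) = 0$ with $c_{i} \in K$, invoke Lemma \ref{kerl*orth} to conclude $\sum_{i} c_{i}\, l_{i} \in \ker l^{*} = {{\mathcal L}_{K}(A)}^{\perp}$, and then pair with each $P_{l_{j}}$ to obtain the homogeneous system $M \mathbf{c} = \mathbf{0}$; unimodularity of $M$ forces $\mathbf{c} = \mathbf{0}$. For spanning, given any word $u$ of length $n$, the inhomogeneous system $M \mathbf{c} = \bigl( (u, P_{l_{j}}) \bigr)_{j = 1}^{k}$ has a unique solution $\mathbf{c} \in K^{k}$; this means precisely that the degree-$n$ polynomial $u - \sum_{i} c_{i}\, l_{i}$ is orthogonal to each $P_{l_{j}}$. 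Since the $P_{l_{j}}$ form a $K$-basis of the $n$-th homogeneous component of ${\mathcal L}_{K}(A)$ and distinct homogeneous components are automatically orthogonal under the canonical scalar product, this polynomial lies in ${{\mathcal L}_{K}(A)}^{\perp} = \ker l^{*}$. Applying $l^{*}$ yields $l^{*}(u) = \sum_{i} c_{i}\, l^{*}(l_{i})$, and extending by linearity the set $\{l^{*}(l_{i})\}$ spans the image under $l^{*}$ of the $n$-th homogeneous component of $K {\langle A \rangle}$.

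The only point where working over a general commutative ring rather than a field could cause difficulty is the solvability of these two linear systems, since neither rank arguments nor Cramer-style division are directly available; the unitriangularity of the Lyndon basis dissolves this obstacle by giving $\det M = 1$, and no other step of the proof relies on special features of $K$.
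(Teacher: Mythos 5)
Your proof is correct and rests on exactly the same mechanism as the paper's: the unitriangular matrix $\bigl((P_{l_{i}}, l_{j})\bigr)$ coming from the triangular expansion of the Lyndon basis, whose unimodularity makes the relevant linear system uniquely solvable over an arbitrary commutative ring $K$. The only difference is organizational --- you separate independence from spanning and route both through Lemma \ref{kerl*orth}, whereas the paper obtains existence and uniqueness of the coefficients simultaneously by manipulating the adjoint identity \eqref{defl*} directly --- but the substance is identical.
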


\begin{proof}
Suppose that, without loss of generality, $l_{1} < l_{2} < \cdots < l_{k}$ with respect to the lexicographic ordering in $A^{+}$.
Consider also the corresponding Lyndon basis $\{ P_{l_{1}}, \, P_{l_{2}}, \, \ldots \, , P_{l_{k}} \}$ of ${\mathcal L}^{n}_{K}(A)$,
where each $P_{l_{i}}$ is written in the from $\displaystyle P_{l_{i}} = l_{i} + \sum_{t > l_{i}} c_{t} t$, for suitable $c_{t} \in K$
(see \cite[Lemma 5.3.2]{Loth}).

Let $w$ now be a given word of length $n$; we have to show that there exist unique coefficients
${\xi}_{1}, \, {\xi}_{2}, \, \ldots \, , \, {\xi}_{k} \in K$ such that
${\xi}_{1} \, l^{*}(l_{1}) \: + \: {\xi}_{2} \, l^{*}(l_{2}) \: + \cdots \: {\xi}_{k} \, l^{*}(l_{k}) \: = \: l^{*}(w)$.
This means that for all $u \in A^{n}$ we must have
$(l^{*}(l_{1}), u) \, {\xi}_{1} \: + \: (l^{*}(l_{2}), u) \, {\xi}_{2} \: + \: \cdots \: + (l^{*}(l_{k}), u) \, {\xi}_{k}
\: = \: (l^{*}(w), u)$, which, by (\ref{defl*}), is equivalent to
$(l(u), l_{1}) \, {\xi}_{1} \: + \: (l(u), l_{2}) \, {\xi}_{2} \: + \: \cdots \: (l(u), l_{k}) \, {\xi}_{k} \: = \:
(l(u), w)$. Since $\{ l(u) \: : \: \: u \in {A}^{n} \}$ and $\{ P_{l_{1}}, \, P_{l_{2}}, \, \ldots \, , P_{l_{k}} \}$ is a generating set
and a $K$-basis, respectively, for ${\mathcal L}^{n}_{K}(A)$ the latter is equivalent to the $k \times k$ linear system
$(P_{l_{i}}, l_{1}) \, {\xi}_{1} \: + \:  (P_{l_{i}}, l_{2}) \, {\xi}_{2} \: + \: \cdots \: + \: (P_{l_{i}}, l_{k}) \, {\xi}_{k}
\: = \:  (P_{l_{i}}, w), \quad i \in \{1, 2, \ldots , k \}$ in the unknowns ${\xi}_{1}, {\xi}_{2}, \ldots , {\xi}_{k}$.
Since $(P_{l_{i}}, l_{i}) = 1$ and $(P_{l_{i}}, l_{j}) = 0$ for $j < i$ this boils down to
\begin{equation}
{\xi}_{i} \: + \: (P_{l_{i}}, l_{i+1}) \, {\xi}_{i+1} \: + \cdots \: + \: (P_{l_{i}}, l_{k}) \, {\xi}_{k} \: = \:
(P_{l_{i}}, w), \quad i \in \{1, 2, \ldots , k \}. \label{Lyndcoeffw}
\end{equation}
It is now evident that the linear system \eqref{Lyndcoeffw} has a unique solution $({\xi}_{1}, {\xi}_{2}, \ldots , {\xi}_{k})$,
as required.
\end{proof}

\smallskip

For a given word $w$ of length $n$ Sch\"utzenberger considered the unique non-negative generator $c(w)$ of the ideal
$\{ (P,w) \, : \, P \in {\mathcal L}_{\mathbb Z}(A) \}$ of $\mathbb Z$ (see \cite[\S 1.6.1]{Reut}).
It is natural to ask how to calculate the number $c(w)$ and secondly try to find a Lie polynomial $Q_w$ of degree $n$ such that
$(Q_{w}, w) = c(w)$.
One would a priori search amongst all Lie polynomials of some basis of the $n$-th homogeneous component ${\mathcal L}^{n}_{\mathbb Z}(A)$.
It turns out that this can be done in terms of just one polynomial, which is not a Lie polynomial in general, namely the element
$l^{*}(w) \in {\mathbb Z}{\langle A \rangle}$.

\begin{theorem}\label{c(w)gcd}
Let $w$ be a word of length $n$ and $l^{*}$ be the adjoint endomorphism of the left normed Lie
bracketing $l$ of the free Lie ring on $A$. Then $c(w)$ is either zero or is equal to the greatest common divisor of the non-zero coefficients
that appear either in the monomials of the polynomial ${l^{*}}(w)$ or in its representation as a linear combination of the images
of the Lyndon words of length $n$ under $l^{*}$.
\end{theorem}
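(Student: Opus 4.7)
The plan is to establish the theorem by identifying $c(w)$ with two gcds in turn, exploiting the adjunction $(l(u),w) = (l^{*}(w),u)$ together with the triangularity already exhibited in the proof of Proposition \ref{l*Lyndon}. Throughout I assume $l^{*}(w)\neq 0$, for otherwise Lemma \ref{kerl*supp}(i) yields $c(w)=0$ directly, which is the first alternative in the statement.

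First, I would show that
\[
c(w) \;=\; d_{1} \;:=\; \gcd\{\, (l^{*}(w),u) \,:\, u \in A^{n},\ (l^{*}(w),u)\neq 0\,\}.
\]
Since $\{l(u): u \in A^{*}\}$ is a $\mathbb{Z}$-linear generating set of $\mathcal{L}_{\mathbb{Z}}(A)$ and only the $n$-th homogeneous component of a polynomial $P$ can contribute to $(P,w)$, the ideal $\{(P,w) : P\in \mathcal{L}_{\mathbb{Z}}(A)\}$ is exactly the $\mathbb{Z}$-span of $\{(l(u),w): u\in A^{n}\}$. By (\ref{defl*}) this span equals the span of the coefficients $(l^{*}(w),u)$, whose positive generator is $d_{1}$. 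Hence $c(w)=d_{1}$.

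Next I would show $d_{1} = d_{2}$, where $d_{2} := \gcd\{\xi_{1},\ldots,\xi_{k}\}$ and
$l^{*}(w) = \sum_{i=1}^{k}\xi_{i}\, l^{*}(l_{i})$ is the unique expansion furnished by Proposition \ref{l*Lyndon}. The inclusion $d_{2}\mid d_{1}$ is immediate, since
$(l^{*}(w),u) = \sum_{i} \xi_{i}\,(l^{*}(l_{i}),u)$ for every $u$. For the reverse divisibility, I would invoke the triangular system (\ref{Lyndcoeffw}) and rewrite it as
\[
\xi_{i} \;=\; (P_{l_{i}},w) \;-\; \sum_{j>i}(P_{l_{i}},l_{j})\,\xi_{j}, \qquad i\in\{1,2,\ldots,k\}.
\]
A downward induction on $i$, starting from $\xi_{k}=(P_{l_{k}},w)$, expresses each $\xi_{i}$ as a $\mathbb{Z}$-linear combination of the numbers $(P_{l_{j}},w)$ with $j\geq i$. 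Since every $P_{l_{j}}$ is a Lie polynomial, each such number is a multiple of $c(w)=d_{1}$, and therefore $d_{1}\mid \xi_{i}$ for every $i$, giving $d_{1}\mid d_{2}$ and completing the equality $c(w)=d_{1}=d_{2}$.

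The only delicate point is the second direction $d_{1}\mid d_{2}$: a priori there is no reason the coordinates $\xi_{i}$ should be $\mathbb{Z}$-combinations of the coefficients of $l^{*}(w)$ itself, and attempting to invert the expansion $l^{*}(w)=\sum \xi_{i}l^{*}(l_{i})$ directly would require control over the matrix $\bigl((l^{*}(l_{i}),u)\bigr)$, which is unwieldy. Routing the argument through the Lyndon basis $\{P_{l_{i}}\}$ of $\mathcal{L}^{n}_{\mathbb{Z}}(A)$, whose unitriangular change-of-basis with the Lyndon words supplies (\ref{Lyndcoeffw}), bypasses this obstacle by reducing the divisibility to the defining property of $c(w)$.
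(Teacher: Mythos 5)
Your proposal is correct and follows essentially the same route as the paper: both identify $c(w)$ with the positive generator of the ideal $\{(P,w) : P \in \mathcal{L}_{\mathbb{Z}}(A)\}$, compute it via the generating set $\{l(u) : u \in A^{n}\}$ and the adjunction \eqref{defl*} for the first gcd, and then use the unitriangular system \eqref{Lyndcoeffw} from Proposition \ref{l*Lyndon} to pass to the Lyndon coordinates $\xi_{i}$. Your explicit two-way divisibility argument ($d_{2}\mid d_{1}$ via the expansion, $d_{1}\mid d_{2}$ via downward induction on the triangular system) is just a spelled-out version of the paper's statement that $\langle (P_{l_{1}},w),\ldots,(P_{l_{k}},w)\rangle = \langle \xi_{1},\ldots,\xi_{k}\rangle$.
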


\begin{proof}
Clearly $c(w) = 0$ if and only if $l^{*}(w) = 0$. Suppose that $l^{*}(w) \neq 0$. Then we obtain
$$\{ (P,w)  \: : \: P \in {\mathcal L}_{\mathbb Z}(A) \} \: = \: \{ (P,w) \: : \:  P \in {\mathcal L}^{n}_{\mathbb Z}(A) \}
 \: = \: \langle \, \{ (P, w) \: : \: P \in \mathcal X , \: \mathcal X \: \mbox{generating set for} \,
         {\mathcal L}^{n}_{\mathbb Z}(A) \} \rangle \, .$$
The ideal $\langle \,  n_{1}, n_{2}, \ldots n_{k}  \, \rangle$ generated by given integers
$n_{1}, n_{2}, \ldots , n_{k}$ is equal to $ \langle \gcd \{ n_{1}, n_{2}, \ldots n_{k} \} \rangle$.
Thus if we choose $\mathcal X = \{ l(u) \: : \: \: u \in {A}^{\, n} \}$ then by (\ref{defl*}) and the definition of $c(w)$ we obtain
$c(w) =  \gcd \, \{ (l(u), w) \: : \: u \in {A}^{\, n} \} = \gcd \, \{ ({l^{*}}(w), u) \: : \: u \in {A}^{\, n}\}$, as required.
If, on the other hand, we choose $\mathcal X = \{ P_{l_{1}}, \, P_{l_{2}}, \, \ldots \, , P_{l_{k}} \}$ then again
$c(w) = \gcd \, \{ (P_{l_{1}}, w), \, (P_{l_{2}}, w), \, \ldots , \, (P_{l_{k}}, w) \}$. Now the triangular form of the equations \eqref{Lyndcoeffw}
for $K = \mathbb Z$ immediately implies that ${\xi}_{1}, {\xi}_{2}, \ldots , {\xi}_{k} \in \mathbb Z$ and finally
$\langle \, (P_{l_{1}}, w), \, (P_{l_{2}}, w), \, \ldots , \, (P_{l_{k}}, w) \, \rangle \: = \:
\langle \, {\xi}_{1}, \, {\xi}_{2}, \, \ldots , \, {\xi}_{k} \, \rangle$.
\end{proof}

Suppose that ${l^{*}}(w) = d_1 u_1 \, + \, d_2 u_2 \, + \,  \cdots \, + \, d_s u_s$, where
$d_{1}, d_{2}, \ldots , d_{s} \in {\mathbb Z}^{*}$ and $u_{1}, u_{2}, \ldots , u_{s} \in A^{n}$, \textit{i.e.}, they are words on the alphabet $A$
of length $n$. Then by Theorem 2.4 $c(w) = \gcd \, ( d_1 , d_2 , \ldots , d_s )$ and by an extension of {\em Bezout's identity}
to more than two integers there exist $k_1 , k_2 , \ldots , k_s \in \mathbb Z$ such that
$c(w) = k_{1} d_{1} \, + \, k_{2} d_{2} \, + \,  \cdots \, + \, k_{s}  d_{s}$. Therefore if we set
$Q_w = k_1 \, l(u_{1}) \, + \, k_2 \, l(u_{2}) \, + \, \cdots \, + \, k_s \, l(u_s)$ by \eqref{defl*} we easily
obtain $(Q_w, w) = c(w)$, as required.

\medskip

Let $m$ be a positive integer with $m > 1$. The natural projection $k \mapsto \overline{k} = k \, (\bmod \, m)$ from $\mathbb Z$ onto
${\mathbb Z}_{m}$ induces a surjective map
$\theta \, : \, {\mathbb Z} \langle A \rangle \: \twoheadrightarrow \: {\mathbb Z}_{m} \langle A \rangle$
that sends a polynomial $P = \sum_{u \in {A}^{*}} (P, u) \, u \in {\mathbb Z} \langle A \rangle$ to the polynomial
${\theta}(P) = \sum_{u \in {A}^{*}} \overline{(P, u)} \, u \in {\mathbb Z}_{m} \langle A \rangle$.
Clearly $\ker{\theta} = (m) \, {\mathbb Z} \langle A \rangle$.
The restriction $\psi$ of $\theta$ to the free Lie ring ${\mathcal L}_{\mathbb Z}(A)$ is also surjective onto
${\mathcal L}_{{\mathbb Z}_{m}}(A)$ with $\ker{\psi} = (m) \, {\mathcal L}_{\mathbb Z}(A)$.
If we denote the left normed Lie bracketing over ${\mathbb Z}_{m}$ and its adjoint by $\overline{l}$ and
$\overline{l^{*}}$, respectively, it is easy to see that for each word $u$ in
${A}^{*}$ we have ${\psi}(l(u)) = {\overline{l}}(u)$. From this we can show that
${\theta}({l^{*}}(w)) = {\overline{l^{*}}}(w)$, for each word $w$ in ${A}^{*}$.
Indeed,
${\theta}({l^{*}}(w)) =  {\theta} ( \sum_{u} ( {l^{*}}(w), u) \, u )
                    =   \sum_{u} \overline{( {l^{*}}(w), u)} \, u
                    =   \sum_{u} \overline{( l(u), w)} \, u
                    =   \sum_{u} (\overline{l}(u), w) \, u
                    =   \overline{{l^{*}}}(w)$.

\begin{theorem}\label{suppm|c(w)}
\begin{description}
\item[\em{(i)}]
The complement of the support of the free Lie algebra
${\mathcal L}_{{\mathbb Z}_{m}}(A)$ over the ring ${\mathbb Z}_{m}$ of integers $\bmod \: m$ consists of
all words $w$ such that $m \mid c(w)$ or equivalently those words with the property that the polynomial
$l^{*}(w)$ lies in $(m) \, {\mathbb Z} \langle A \rangle$.

\item[\em{(ii)}]

A pair of words $u, \, v$ is twin (respectively anti-twin) with respect to
${\mathcal L}_{{\mathbb Z}_{m}}(A)$ if and only if the polynomial $l^{*}(u) - l^{*}(v)$
(respectively $l^{*}(u) + l^{*}(v)$) lies in $(m) \, {\mathbb Z} \langle A \rangle$.
\end{description}
\end{theorem}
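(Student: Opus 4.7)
The proof plan is to transfer the characterizations already obtained for an arbitrary ring $K$ in Lemma \ref{kerl*supp} down to the specific case $K = \mathbb{Z}_m$, using the commutation between $l^*$ and the coefficient-wise reduction map $\theta$ established just before the theorem statement.

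First I would handle part (i). By Lemma \ref{kerl*supp}(i) applied to $K = \mathbb{Z}_m$, a word $w$ fails to lie in the support of $\mathcal{L}_{\mathbb{Z}_m}(A)$ precisely when $\overline{l^{*}}(w) = 0$ in $\mathbb{Z}_m\langle A\rangle$. The relation $\theta(l^{*}(w)) = \overline{l^{*}}(w)$ verified just above the theorem rephrases this as $l^{*}(w) \in \ker\theta = (m)\,\mathbb{Z}\langle A\rangle$, which is the second equivalent condition in the statement. It remains to identify this with the divisibility $m \mid c(w)$. If $l^{*}(w) = 0$, then $c(w) = 0$ by Theorem \ref{c(w)gcd} and trivially $m \mid 0$. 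Otherwise, writing $l^{*}(w) = \sum_{i} d_i u_i$ with distinct words $u_i$ and nonzero $d_i \in \mathbb{Z}$, the containment $l^{*}(w) \in (m)\,\mathbb{Z}\langle A\rangle$ is equivalent to $m \mid d_i$ for every $i$, which in turn, by the gcd characterization in Theorem \ref{c(w)gcd}, is equivalent to $m \mid c(w)$.

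For part (ii) the argument is essentially the same, with binomials in place of single words. By Lemma \ref{kerl*supp}(ii) applied over $\mathbb{Z}_m$, the pair $(u,v)$ is twin (respectively anti-twin) with respect to $\mathcal{L}_{\mathbb{Z}_m}(A)$ if and only if $\overline{l^{*}}(u-v) = 0$ (respectively $\overline{l^{*}}(u+v) = 0$) in $\mathbb{Z}_m\langle A\rangle$. Using linearity of $l^{*}$ and $\theta$ together with $\theta\circ l^{*} = \overline{l^{*}}$, these identities become
\[
\theta\bigl(l^{*}(u) - l^{*}(v)\bigr) = 0, \qquad \theta\bigl(l^{*}(u) + l^{*}(v)\bigr) = 0,
\]
i.e., $l^{*}(u) \mp l^{*}(v) \in \ker\theta = (m)\,\mathbb{Z}\langle A\rangle$, which is what is claimed.

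The whole proof is essentially a chase through the identifications already in place, so there is no real obstacle; the only point that demands a moment of care is the equivalence, in part (i), between the ideal membership $l^{*}(w) \in (m)\,\mathbb{Z}\langle A\rangle$ and the divisibility $m \mid c(w)$. This is where Theorem \ref{c(w)gcd} must be invoked explicitly, since $c(w)$ is defined via all Lie polynomials rather than via $l^{*}(w)$ alone, and one has to note separately that the degenerate case $l^{*}(w)=0$ still fits the equivalence.
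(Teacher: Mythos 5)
Your proposal is correct and follows essentially the same route as the paper: Lemma \ref{kerl*supp} over $K = \mathbb{Z}_m$, the identity $\theta \circ l^{*} = \overline{l^{*}}$ with $\ker\theta = (m)\,\mathbb{Z}\langle A\rangle$, and Theorem \ref{c(w)gcd} to convert ideal membership into $m \mid c(w)$, handling the degenerate case $l^{*}(w)=0$ separately. No gaps.
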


\begin{proof}
(i)  By Lemma \ref{kerl*supp}\,(i) applied for $K = {\mathbb Z}_{m}$, the complement of the support of ${\mathcal L}_{{\mathbb Z}_{m}}(A)$ is equal to
$\{ w \in A^{*} \: : \: \overline{l^{*}}(w) \, = \, \overline{0} \}$. Since $\ker \theta = (m) \, {\mathbb Z} \langle A \rangle$ and
${\theta}({l^{*}}(w)) = {\overline{l^{*}}}(w)$ the latter
is equal to $\{ \, w \in A^{*} \: : \: {l^{*}}(w) \in (m) \, {\mathbb Z} \langle A \rangle \}$.
If ${l^{*}}(w) = d_1 u_1 \, + \, d_2  u_2 \, + \, \cdots \, + \, d_s u_s$, for
$d_{1}, d_{2}, \ldots , d_{s} \in {\mathbb Z}^{*}$ (that depend on the word $w$) then $m \mid d_{i}$
for all $i \in \{1, 2, \ldots ,s \}$ if and only if $m \mid \gcd ( d_1 , d_2 , \ldots , d_s )$ which, by
Theorem \ref{c(w)gcd}, is equal to $c(w)$. On the other hand, it is clear that
$l^{*}(w) = 0$ is equivalent to $c(w) = 0$. In any case ${l^{*}}(w) \in (m) \, {\mathbb Z} \langle A \rangle$
if and only if $m \mid c(w)$ and the result follows.

(ii) It follows similarly from part (i) and Lemma \ref{kerl*supp}\,(ii).
\end{proof}

\medskip

Let us see now the way the Sch\"utzenberger problems relate to the {\em Pascal triangle} $\bmod \, m$.
When $m = p$, a prime number, an old result due to E. Lucas \cite{Luca} known as the {\em Lucas correspondence theorem}
(for a nice exposition of this see \cite{Fine}) asserts that if $n$ and $r$ have expansions in base $p$ respectively given by
$n = \sum_{q \geq 0} n_{q} p^{\, q}$ and $r = \sum_{q \geq 0} r_{q} p^{\, q}$ with $n_{q}, r_{q} \in \{ 0,1, \ldots p-1 \}$, then
\[ { n \choose r }  \equiv \prod_{q \geq 0} { n_{q} \choose r_{q} } \, (\bmod \, p). \]

By another old result of E. Kummer, known as {\em Kummer's lemma} \cite{Kumm} the highest power of
a prime $p$ dividing $\displaystyle {{ k  + l } \choose k}$ is equal to the number of carries in the $p$-ary
addition of $k$ and $l$.
This enables us to solve directly the Sch\"utzenberger problems for words $w$ of the form $w = a^{k}ba^{l}$.

\begin{lemma}\label{l*1b}
{\em Let $k$ and $l$ be non-negative integers which are not both equal to zero and
$m$ be a positive integer with primary decomposition $m =  p_{1}^{e_{1}} p_{2}^{e_{2}} \cdots p_{s}^{e_{s}}$. Then}
\begin{description}
\item[\em{(i)}]
\begin{equation*}
l^{*}(a^{k}ba^{l}) \: = \: {(-1)}^{k} {{k  + l } \choose k} l^{*}(ba^{k + l})
                   \: = \: {(-1)}^{k} {{k  + l } \choose k} \, \{ ba^{k + l } - aba^{k + l - 1} \} .
\end{equation*}
\item[\em{(ii)}]
The word $a^{k}ba^{l}$ does not lie in the support of the free Lie algebra ${\mathcal L}_{{\mathbb Z}_{m}}(A)$ if and only if
for each $i \in \{1, 2, \ldots , s \}$ the number of carries in the $p_{i}$-ary addition of $k$ and $l$ is at least $e_{i}$.
\end{description}
\end{lemma}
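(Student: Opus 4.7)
The plan is to prove (i) by a direct induction on $n := k+l \geq 1$, isolating the scalar $(-1)^k {{k+l} \choose k}$ via Pascal's rule, and then to derive (ii) from (i) using Theorem \ref{suppm|c(w)}(i) together with Kummer's lemma.

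For (i), I would first set $f(k,l) := l^{*}(a^k b a^l)$ and record the auxiliary fact that $l^{*}(a^n) = 0$ for every $n \geq 2$; this is a one-line induction from (\ref{defrecl*}), since $l^{*}(a^n) = l^{*}(a^{n-1})\,a - l^{*}(a^{n-1})\,a$. The base cases $n = 1$, namely $f(1,0) = ab - ba$ and $f(0,1) = ba - ab$, are immediate and match the claim. For the inductive step with $k, l \geq 1$, the recursion (\ref{defrecl*}) applied to $w = a \cdot (a^{k-1}ba^{l-1}) \cdot a$ gives
\begin{equation*}
f(k,l) \: = \: f(k, l-1)\, a \, - \, f(k-1, l)\, a,
\end{equation*}
and substituting the inductive hypothesis together with the Pascal identity ${{n-1} \choose k} + {{n-1} \choose {k-1}} = {n \choose k}$ collapses the right-hand side to $(-1)^k {n \choose k}(ba^n - aba^{n-1})$. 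The two boundary cases use the same recursion together with the vanishing above: if $k = 0$ and $l = n \geq 2$ then $f(0,l) = f(0, l-1)\, a$, while if $l = 0$ and $k = n \geq 2$ then $f(k, 0) = -\, f(k-1, 0)\, a$; in either case the inductive hypothesis yields the stated formula.

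For (ii), Theorem \ref{suppm|c(w)}(i) tells us that $a^k b a^l$ fails to lie in the support of ${\mathcal L}_{{\mathbb Z}_{m}}(A)$ precisely when $l^{*}(a^k b a^l) \in (m)\, {\mathbb Z} \langle A \rangle$. Since $k + l \geq 1$, the two monomials $ba^{k+l}$ and $aba^{k+l-1}$ appearing in (i) are distinct, so this condition reduces by (i) to $m \mid {{k+l} \choose k}$. Writing $m = \prod_{i=1}^{s} p_{i}^{e_{i}}$, the latter is equivalent to $p_{i}^{e_{i}} \mid {{k+l} \choose k}$ for every $i \in \{1, 2, \ldots, s\}$, which by Kummer's lemma is precisely the statement that the number of carries in the $p_{i}$-ary addition of $k$ and $l$ is at least $e_{i}$.

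The only delicate points will be the boundary cases $k = 0$ or $l = 0$ in the induction for (i), where one of the two terms in the recursion involves $l^{*}(a^n)$ and must be killed using the vanishing $l^{*}(a^n) = 0$ for $n \geq 2$; apart from this routine bookkeeping and the invocation of Kummer's lemma for the passage from (i) to (ii), I do not foresee any substantive obstacle.
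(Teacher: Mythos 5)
Your proposal is correct and follows essentially the same route as the paper: induction on $k+l$ via the recursion (\ref{defrecl*}) combined with Pascal's rule for (i), and Theorem \ref{suppm|c(w)}\,(i) plus Kummer's lemma for (ii) (the paper phrases the divisibility condition through $c(a^kba^l)=\binom{k+l}{k}$ via Theorem \ref{c(w)gcd}, while you read it off directly from the two distinct monomials of $l^{*}(a^kba^l)$, which is an equivalent formulation). Your explicit treatment of the boundary cases $k=0$ or $l=0$ using $l^{*}(a^n)=0$ is slightly more careful than the paper's ``without loss of generality $k\geq 1$''.
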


\begin{proof}
(i) First we show that $l^{*}(ba^{m})  =  ba^{m } - aba^{m - 1}$ by an easy induction on $m$. Assuming, without loss of generality,
that $k \geq 1$ the recursive definition of $l^{*}$ yields $l^{*}(a^{k}ba^{l}) = \{ l^{*}(a^{k}ba^{l -1}) - l^{*}(a^{k -1}ba^{l}) \} \, a$.
The proof is completed by induction on $k + l$ and the recursive definition of binomial coefficients.

(ii) By Theorem \ref{suppm|c(w)} a word $w$ lies in the complement of the support of the free Lie algebra
${\mathcal L}_{{\mathbb Z}_{m}}(A)$ if and only if $p_{i}^{e_{i}} \mid c(w)$ for each $i \in \{1, 2, \ldots , s \}$.
Theorem \ref{c(w)gcd} and part (i) yield $c(a^{k}ba^{l}) = {{k  + l } \choose k}$ and the result follows by Kummer's lemma.
\end{proof}

Note that in the binary case the condition of Lemma \ref{l*1b}\,(ii) simply means that if $k$ and $l$ are written in base $2$ as
$k  = \sum_{q \geq 0} {k}_{q} 2^{q}$ and $l = \sum_{q \geq 0} {\lambda}_{q} 2^{q}$ with ${k}_{q}, {l}_{q} \in \{ 0,1 \}$,
there exists at least one position $q$ where ${k}_{q} = {l}_{q} = 1$.

\medskip

Let us now discuss Problem \ref{twinantiZ}. In view of Lemma \ref{kerl*supp}\,(ii), given a pair of words $u, \, v$ one has to check
whether $l^{*}(u) = l^{*}(v)$ (respectively $l^{*}(u) = - l^{*}(v)$) for the pair to be twin (respectively anti-twin).
There are some trivial solutions of this problem, namely when $l^{*}(u) = l^{*}(v) = 0$, \textit{i.e.}, when both $u$ and $v$
are either powers of a letter with exponent larger than one or palindromes of even length. So let us suppose that both words do
lie in the support of the free Lie ring. In view of Lemma \ref{l*reversal} we propose the following conjecture.

\smallskip

\begin{conjecture}\label{conjtwinantiZ}
Let $l^{*}$ be the adjoint endomorphism of the left normed Lie bracketing of the free Lie ring and let
$u$ and $v$ be words of common length $n$ such that both $l^{*}(u)$ and $l^{*}(v)$ are non-zero. Then
\begin{description}
 \item[\em{(i)}]
$l^{*}(u) = l^{*}(v)$ if and only if $u = v$ or $n$ is odd and $u = \tilde{v}$.
 \item[\em{(ii)}]
$l^{*}(u) = - \, l^{*}(v)$ if and only if $n$ is even and $u = \tilde{v}$.
\end{description}
\end{conjecture}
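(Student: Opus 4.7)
The plan is to dispose of the ``if'' direction by invoking the reversal identity of Lemma \ref{l*reversal}, then to reduce the substantive ``only if'' direction to a binary alphabet, and finally to translate the binary case into a statement about Pascal descent polynomials. The ``if'' direction is immediate: $u = v$ trivially gives $l^*(u) = l^*(v)$, and $u = \tilde{v}$ gives $l^*(u) = (-1)^{n+1} l^*(v)$, which equals $l^*(v)$ for $n$ odd (case (i)) and $-l^*(v)$ for $n$ even (case (ii)).

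For the ``only if'' direction, I would first reduce to the case $|A|=2$. Every set map $\pi \colon A \to \{a,b\}$ extends to a monoid morphism $A^* \to \{a,b\}^*$ and then linearly to a ring morphism $\mathbb{Z}\langle A\rangle \to \mathbb{Z}\langle\{a,b\}\rangle$; since this morphism respects concatenation, the Lie bracket, and the canonical scalar product, it commutes with $l^*$. Consequently $l^*(u) = \varepsilon\, l^*(v)$ with $\varepsilon \in \{\pm 1\}$ implies $l^*(\pi(u)) = \varepsilon\, l^*(\pi(v))$ for every such $\pi$. Granting the conjecture for $|A|=2$, the projection $\pi_i$ sending the letter $a_i$ to $b$ and every other letter to $a$ forces the position set $I_i(u)$ of $a_i$ in $u$ to equal either $I_i(v)$ or $\tau_n(I_i(v))$. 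Applying in addition a projection that collapses two distinct letters $a_i, a_j$ simultaneously and using the disjointness of $I_i(v)$ and $I_j(v)$ inside $v$, one rules out mixing the two alternatives across different letters, concluding either $u = v$ or $u = \tilde{v}$; the parity constraint is inherited from the binary statement.

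In the binary case, a word $w \in \{a,b\}^n$ is encoded by the subset $I \subseteq [n]$ of positions of $b$, and as indicated in the introductory material for Section \ref{pascal}, the content of $l^*(w)$ is captured by the Pascal descent polynomial $p_n(I)$. The conjecture thus reduces to: for $p_n(I), p_n(J) \neq 0$, $p_n(I) = p_n(J)$ forces $I = J$ or ($n$ odd and $I = \tau_n(J)$), and $p_n(I) = -\,p_n(J)$ forces $n$ even and $I = \tau_n(J)$. I expect this to be the main obstacle, since one needs an invariant of $p_n(I)$ that separates $I$ from every $J \notin \{I, \tau_n(I)\}$. My first attempts would be an induction on $n$ via the recursion for $p_n(I)$ developed in Section \ref{pascal}, a comparison of leading monomials under a well-chosen monomial order on $\mathbb{Z}[x_1, \ldots, x_n]$, or evaluation at carefully chosen points that isolate the entries of $I$ one at a time.
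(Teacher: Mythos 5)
The statement you are asked to prove is Conjecture \ref{conjtwinantiZ}, which the paper itself leaves open: it is not proved anywhere in the text. What the paper does establish is exactly the skeleton you sketch --- the ``if'' direction via Lemma \ref{l*reversal}, the reduction to a two-letter alphabet (Reduction Theorem \ref{reducetoalphof2}), and the reformulation of the binary case in terms of Pascal descent polynomials (Conjecture \ref{conjpn}). Your proposal reproduces this outline but does not close the argument: the binary case, which you correctly identify as ``the main obstacle,'' is precisely the open problem, and the three strategies you list (induction on the recursion for $p_n(I)$, leading monomials, evaluation at special points) are left entirely unexecuted. Evaluation at $(1,t,\ldots,t)$, for instance, only recovers the invariant $N_n(I)$ of Proposition \ref{Nn}, and the remarks after Corollary \ref{suffsupp} exhibit distinct $I$, $J$ with $N_{9}(I) = N_{9}(J)$ but $p_{9}(I) \neq p_{9}(J)$, so that invariant alone cannot separate the sets. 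There is therefore a genuine, unfilled gap at the heart of the proof.

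Two further points on the reduction step. First, your justification that the literal morphism ``respects the canonical scalar product'' and hence commutes with $l^{*}$ is false as stated: a surjective literal morphism identifies distinct words and is not an isometry for \eqref{scalarproduct}. The commutation $\phi\, l^{*}_{A} = l^{*}_{\Sigma}\, \phi$ does hold, but the paper proves it by induction on $|w|$ using the recursive definition \eqref{defrecl*} (Lemma \ref{l*2alph}\,(i)), not via the scalar product. Second, your claim that each projection $\pi_{i}$ ``forces $I_{i}(u) = I_{i}(v)$ or $\tau_{n}(I_{i}(v))$'' ignores the degenerate case $l^{*}(\pi_{i}(u)) = l^{*}(\pi_{i}(v)) = 0$, i.e.\ when the projected binary words are even palindromes or powers of a letter; there the binary conjecture gives no information at all. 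The paper's proof of Reduction Theorem \ref{reducetoalphof2} spends most of its effort on exactly this case, partitioning the alphabet into the letters whose projections are palindromic and those whose projections agree, and using Lemma \ref{alphsubst} on two-letter collapses to rule out mixing the alternatives. Your pairwise-collapse idea points in the right direction, but that case analysis would have to be carried out before even the reduction is complete.
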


\smallskip

\begin{rtheorem}\label{reducetoalphof2}
It suffices to prove Conjecture \ref{conjtwinantiZ} for an alphabet of two letters.
\end{rtheorem}

The proof of this result will occupy the remaining of this section. Let us first see how $l^{*}$ is affected by alphabetic substitutions.
Consider two finite alphabets $A$ and $\Sigma$ with $|A| \geq |{\Sigma}| \geq 2$ and a mapping
$\phi$ from $A$ onto ${\Sigma}$. This induces a surjective {\em literal} morphism (\textit{i.e.}, a morphism such that ${\phi}(a) \in \Sigma$, for each
$a \in A$), also denoted by $\phi$, from $A^{*}$ onto ${\Sigma}^{*}$ which in turn can be extended linearly to an algebra surjective
homomorphism - still denoted by $\phi$ - from $K {\langle A \rangle}$ onto $K {\langle \Sigma \rangle}$. Let $l^{*}_{A}$ and $l^{*}_{\Sigma}$
denote the adjoint endomorphism of the left normed Lie bracketing of the free Lie algebras ${\mathcal L}_{K}(A)$
and ${\mathcal L}_{K}(\Sigma)$, respectively.

\begin{lemma}\label{l*2alph}
Let $A$ and $\Sigma$ be two finite alphabets with $|A| \geq |{\Sigma}| \geq 2$  and $\phi$ be a fixed literal morphism
from $A^{*}$ onto ${\Sigma}^{*}$.
\begin{description}
\item[\em{(i)}]
The algebra homomorphisms ${\phi} \, l^{*}_{A}$ and $l^{*}_{\Sigma} \, {\phi}$ from $K {\langle A \rangle}$ to
$K {\langle {\Sigma} \rangle}$ are identical and ${\phi}(\ker l^{*}_{A}) \subseteq \ker l^{*}_{\Sigma}$.
\item[\em{(ii)}]
If $|A| = |{\Sigma}|$ then $\ker l^{*}_{A}$ is mapped bijectively onto $\ker l^{*}_{\Sigma}$ under $\phi$.
\item[\em{(iii)}]
${\phi}(\ker l^{*}_{A}) \: = \: \ker l^{*}_{\Sigma}$ but $\ker l^{*}_{A}$ is a proper subset of
${\phi}^{-1}({\ker l^{*}_{\Sigma}})$ if $|A| > |{\Sigma}|$.
\end{description}
\end{lemma}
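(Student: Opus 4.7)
The plan is to prove part (i) directly by induction and then derive parts (ii) and (iii) as formal consequences, with only a pigeonhole observation needed for the properness in (iii).

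For (i), the goal is the equality $\phi \circ l^*_A = l^*_\Sigma \circ \phi$ of algebra homomorphisms from $K\langle A \rangle$ to $K\langle \Sigma \rangle$. I would verify this on words by induction on length using the recursive formula (\ref{defrecl*}), then extend by linearity. The base cases $w = \epsilon$ and $w = a$ are immediate from $\phi(0) = 0$ and the fact that $\phi$ is literal (so $\phi(a) \in \Sigma$). For the inductive step, write $w = aub$ with $a,b \in A$, $u \in A^*$, and unpack
\[
\phi\bigl(l^*_A(aub)\bigr) \;=\; \phi\bigl(l^*_A(au)\bigr)\,\phi(b) \;-\; \phi\bigl(l^*_A(ub)\bigr)\,\phi(a),
\]
using that $\phi$ respects concatenation and subtraction; the induction hypothesis and the recursion (\ref{defrecl*}) on the $\Sigma$-side identify this with $l^*_\Sigma(\phi(aub))$. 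Crucially, it is the literal nature of $\phi$ that makes $\phi(a), \phi(b)$ single letters of $\Sigma$, so the recursion applies cleanly on both sides. The inclusion $\phi(\ker l^*_A) \subseteq \ker l^*_\Sigma$ then follows at once.

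For (ii), when $|A| = |\Sigma|$ the literal morphism $\phi$ is a bijection of alphabets, extending to a $K$-algebra isomorphism $K\langle A \rangle \to K\langle \Sigma \rangle$ whose inverse $\phi^{-1}$ is again literal. Applying part (i) to $\phi^{-1}$ yields $\phi^{-1}(\ker l^*_\Sigma) \subseteq \ker l^*_A$, which combined with (i) gives $\phi(\ker l^*_A) = \ker l^*_\Sigma$; bijectivity on kernels is inherited from the global bijectivity of $\phi$.

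For (iii), to establish $\phi(\ker l^*_A) = \ker l^*_\Sigma$ I would pick a sub-alphabet $A' \subseteq A$ with $|A'| = |\Sigma|$ on which $\phi$ restricts to a bijection $\phi'\colon A' \to \Sigma$. Because the recursion (\ref{defrecl*}) uses only concatenation and scalar operations, $l^*_{A'}$ coincides with $l^*_A$ on the natural inclusion $K\langle A' \rangle \hookrightarrow K\langle A \rangle$, so $\ker l^*_{A'} \subseteq \ker l^*_A$; applying (ii) to $\phi'$ then lifts any $Q \in \ker l^*_\Sigma$ to an element of $\ker l^*_A \cap K\langle A' \rangle$. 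For the proper inclusion, pigeonhole gives distinct $a, b \in A$ with $\phi(a) = \phi(b)$, and the binomial $a - b$ satisfies $\phi(a - b) = 0 \in \ker l^*_\Sigma$ while $l^*_A(a - b) = a - b \neq 0$, exhibiting an element of $\phi^{-1}(\ker l^*_\Sigma) \setminus \ker l^*_A$. No step here poses a genuine obstacle; the only place requiring care is making the inductive matching in (i) respect the \emph{literal} hypothesis, and everything downstream is formal.
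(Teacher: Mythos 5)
Your proof is correct and follows essentially the same route as the paper: induction on word length via the recursion (\ref{defrecl*}) for (i), applying (i) to both $\phi$ and $\phi^{-1}$ for (ii), and restricting $\phi$ to a sub-alphabet on which it is a bijection (then using (ii) and the compatibility of $l^{*}$ with alphabet inclusion) to get $\phi(\ker l^{*}_{A}) \supseteq \ker l^{*}_{\Sigma}$ in (iii). The only divergence is your witness for properness --- the binomial $a - b$ with $\phi(a) = \phi(b)$, rather than the paper's single word $abca$ mapping onto an even-length palindrome --- which is simpler and equally valid, since the lemma only asserts proper containment of subsets of $K\langle A\rangle$.
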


\begin{proof}
(i) It suffices to show that $l^{*}_{\Sigma}({\phi}(w)) = {\phi}(l^{*}_{A}(w))$ for each word
$w \in A^{*}$. This follows by an easy induction on $|w|$ using the recursive definition \eqref{defrecl*} of $l^{*}$.
The inclusion ${\phi}(\ker l^{*}_{A}) \subseteq \ker l^{*}_{\Sigma}$ follows easily.

(ii) We apply the result of part (i) for the literal morphisms $\phi : A \twoheadrightarrow \Sigma$ and
${\phi}^{-1} : \Sigma \twoheadrightarrow A$ and obtain ${\phi}(\ker l^{*}_{A}) \subseteq \ker l^{*}_{\Sigma}$ and
${\phi}^{-1}(\ker l^{*}_{\Sigma}) \subseteq \ker l^{*}_{A}$ which clearly implies the required result.

(iii) It remains to show that ${\phi}(\ker l^{*}_{A}) \supseteq \ker l^{*}_{\Sigma}$.
Let $Q = \sum_{i} {\lambda}_{i} v_{i}$ be a polynomial in $\ker l^{*}_{\Sigma}$ and
${\Sigma}_{Q} = \bigcup_{i} \, alph(v_{i})$. For each letter $b \in {\Sigma}_{Q}$ choose a unique letter $a \in A$ such that
${\phi}(a) = b$ and let $A_{Q}$ be the subset of $A$ consisting of all those chosen letters.
Clearly the restriction $\widehat{\phi}$ of $\phi$ to $A_{Q}$ is a bijection from $A_{Q}$ onto ${\Sigma}_{Q}$.
Consider the extension of $\widehat{\phi}$ - denoted by the same symbol - to ${A_{Q}}^{*}$ which is a bijective
homomorphism to ${{\Sigma}_{Q}}^{*}$.
Let $P = \sum_{i} {\lambda}_{i} {\widehat{\phi}}^{-1}(v_{i})$.
Then clearly $\widehat{\phi}(P) = Q$, so that ${\phi}(P) = Q$. Furthermore, since $Q \in \ker l^{*}_{{\Sigma}_{Q}}$ and
$|A_{Q}| = |{\Sigma}_{Q}|$ part (ii) applied for the bijection $\widehat{\phi}$ implies that
$P$ also lies in $\ker l^{*}_{A_{Q}}$ and hence in $\ker l^{*}_{A}$.

Suppose now that $|A| > |{\Sigma}|$. We can consider three distinct letters $a, b, c \in A$
and two distinct letters $e, f \in \Sigma$ in such a way that, without loss of generality,
${\phi}(a) = e$ and ${\phi}(b) = {\phi}(c) = f$.
Consider the word $w = abca$. Then $w \in {\phi}^{-1}({\ker l^{*}_{\Sigma}})$ since clearly ${\phi}(w) = ef^{2}e$ is
a palindrome of length $4$, but $w \not\in \ker l^{*}_{A}$ since $l_{A}^{*}(w) = abca - baca - 2bca^{2} + 2cba^{2} + caba - acba \neq 0$.
\end{proof}

\smallskip

Consider a two-lettered alphabet $\Sigma = \{0,1\}$. For each subset $B$ of $A$ let ${\phi}_{B}$ be the literal morphism from $A^{*}$ onto ${\Sigma}^{*}$ defined as ${\phi}_{B}(a)  = 1$, when $a \in B$ and ${\phi}_{B}(a)  = 0$, otherwise.
For brevity when $B = \{b\}$ we write ${\phi}_{b}$ instead of ${\phi}_{\{ b \}}$. Let us also denote the set of palindromes of length $n$
in $A^{*}$ and ${\Sigma}^{*}$ by $Pal_{n}(A)$ and $Pal_{n}({\Sigma})$, respectively.

\begin{lemma}\label{alphsubst}
Let $u, v \in A^{*}$ of common length and $a, b$ be distinct elements of $A$.
\begin{description}
\item[\em{(i)}]
${\phi}_{a}(u) = {\phi}_{a}(v)$, for each $a \in A$, if and only if $u = v$.
\item[\em{(ii)}]
If ${\phi}_{a}(u) \in Pal_{n}({\Sigma})$ for each $a \in A$, then $u \in Pal_{n}(A)$.
\item[\em{(iii)}]
If ${\phi}_{\{a,b\}}(u) = {\phi}_{\{a,b\}}(v)$ and ${\phi}_{a}(u) = {\phi}_{a}(v)$ then ${\phi}_{b}(u) = {\phi}_{b}(v)$.
\item[\em{(iv)}]
If ${\phi}_{\{a,b\}}(u)$ and ${\phi}_{a}(u)$ lie in $Pal_{n}({\Sigma})$ then ${\phi}_{b}(u)$ does also.
\end{description}
\end{lemma}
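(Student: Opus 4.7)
The plan is to reduce everything to a positional analysis of the single-letter indicator maps $\phi_a$, noting that for a word $u = u_1 u_2 \cdots u_n \in A^{n}$ the letter at position $i$ of $\phi_a(u)$ is $1$ if $u_i = a$ and $0$ otherwise, and similarly $\phi_{\{a,b\}}(u)$ has a $1$ at position $i$ exactly when $u_i \in \{a,b\}$. Once this pointwise description is in place, each of the four assertions reduces to a routine check position by position, so the main work is really just verifying the right set-theoretic identities between the supports of these $0/1$-words.

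For part (i), I would observe that $\phi_a(u) = \phi_a(v)$ says $\{\,i : u_i = a\,\} = \{\,i : v_i = a\,\}$; letting $a$ range over $A$ partitions $[n]$ in exactly the same way for $u$ and $v$, forcing $u_i = v_i$ for every $i$ and hence $u = v$. The reverse implication is trivial. For part (ii), the hypothesis $\phi_a(u) \in Pal_n(\Sigma)$ says that, for each $a \in A$, $u_i = a$ if and only if $u_{n-i+1} = a$; since for every $i$ the letter $u_i$ is the unique $a$ for which this indicator is $1$, we conclude $u_i = u_{n-i+1}$ for all $i$, i.e.\ $u \in Pal_n(A)$.

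For parts (iii) and (iv) the key observation is the pointwise relation
\begin{equation*}
\phi_b(u)_i \;=\; \phi_{\{a,b\}}(u)_i \;-\; \phi_a(u)_i \qquad (i = 1,2,\ldots,n),
\end{equation*}
which is valid since, at any position $i$, $\phi_{\{a,b\}}(u)_i = 1$ breaks into the two disjoint sub-cases $u_i = a$ (giving $\phi_a(u)_i = 1$) and $u_i = b$ (giving $\phi_b(u)_i = 1$). From this identity, (iii) is immediate: equality of the right-hand sides for $u$ and $v$ gives equality of the left-hand sides. Similarly, (iv) follows by applying the identity at position $i$ and at position $n-i+1$ and using the assumed palindromicity of $\phi_{\{a,b\}}(u)$ and $\phi_a(u)$ to conclude $\phi_b(u)_i = \phi_b(u)_{n-i+1}$.

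There is no genuine obstacle here; the only care needed is to handle the bookkeeping cleanly, in particular to make the $0/1$ arithmetic in the displayed identity rigorous (it is a bit-by-bit relation, not an arithmetic identity in $\mathbb{Z}\langle\Sigma\rangle$, and is justified by the disjointness of the events $u_i = a$ and $u_i = b$). Once stated in that form, (iii) and (iv) follow in one line each, and the whole lemma is a sequence of short positional arguments rather than a substantial computation.
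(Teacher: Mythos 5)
Your proof is correct and is essentially the paper's argument: both come down to a letter-by-letter (positional) verification, with your bitwise identity $\phi_b(u)_i = \phi_{\{a,b\}}(u)_i - \phi_a(u)_i$ being just a compact restatement of the paper's case split on whether $u_i$ lies in $\{a,b\}$. The only cosmetic difference is that the paper derives (ii) and (iv) from (i) and (iii) by taking $v = \tilde{u}$ and using $\widetilde{\phi(w)} = \phi(\tilde{w})$, whereas you verify them directly at positions $i$ and $n-i+1$; either way the content is the same.
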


\begin{proof}
For (i) and (iii) it suffices to check the case where $u, v \in A$. Then (i) follows directly from the definition of the morphism ${\phi}_{a}$.
For (iii) if $u, v \not\in \{a,b\}$ the result is clear, whereas if $u, v \in \{a,b\}$ we necessarily obtain $u = v = a$ or $u = v = b$ and
the result follows.
Parts (ii) and (iv) follow respectively from (i) and (iii) for $v = \tilde{u}$ and the fact that $\widetilde{{\phi}(w)} = {\phi}(\tilde{w})$ for
each literal morphism $\phi$ and each word $w \in A^{*}$ which is easily proved by induction on $|w|$.
\end{proof}

\medskip

{\em Proof of Reduction Theorem \ref{reducetoalphof2}.}\,
Suppose that Conjecture \ref{conjtwinantiZ} is true for a two lettered alphabet $\Sigma = \{0,1\}$. We will show that it also holds for any finite alphabet $A$ with $|A| > 2$. Suppose that $u, v \in A^{*}$ of common length $n > 1$ and that both polynomials $l^{*}(u)$ and $l^{*}(v)$ are non-zero.

\smallskip

{\bf (1) $\mathbf {l^{*}(u) = l^{*}(v) }$}. Then $u - v \in \ker l^{*}_{A}$ and for any subset $B$ of $A$
${\phi}_{B}(u - v) \in \ker l^{*}_{\Sigma}$ by Lemma \ref{l*2alph}\,(i), so we obtain $l^{*}({\phi}_{B}(u)) = l^{*}({\phi}_{B}(v))$, for each
$B \subseteq A$. We consider two cases.

\smallskip

{\em (i) $n$ is even}.
Our aim is to show that $u = v$.
If $l^{*}({\phi}_{B}(u)) = l^{*}({\phi}_{B}(v)) \neq 0$, then Conjecture \ref{conjtwinantiZ} yields
${{\phi}_{B}}(u) = {{\phi}_{B}}(v) \notin Pal_{n}({\Sigma})$. If, on the other hand, $l^{*}({\phi}_{B}(u)) = l^{*}({\phi}_{B}(v)) = 0$
both ${\phi}_{B}(u)$ and ${\phi}_{B}(v)$ lie in $Pal_{n}({\Sigma})$. Restricting ourselves initially to singleton subsets $B = \{ a \}$
we define, for our fixed words $u$ and $v$, the sub-alphabets $C$ and $D$ of $A$ as
\[ C  =  \{ a \in A \: : \: {\phi}_{a}(u), {\phi}_{a}(v) \in  Pal_{n}({\Sigma}) \} \quad \mbox{and} \quad
   D  =  \{ a \in A \: : \: {\phi}_{a}(u) = {\phi}_{a}(v)  \not \in Pal_{n}({\Sigma}) \}. \]
By construction $A = C \cup D$ and $C \cap D = \emptyset$.
Suppose that $C = \emptyset$. Then $A = D$ and the result follows immediately by Lemma \ref{alphsubst}\,(i).
If $C \neq \emptyset$ then also $D \neq \emptyset$, since otherwise $A = C$ and by Lemma \ref{alphsubst}\,(ii) it would follow that $u, \, v \in Pal_{n}(A)$, a fact which clearly contradicts our original assumption that both $l^{*}(u)$ and $l^{*}(v)$ are non-zero.
Let $c$ be an arbitrary element of $C$.
In view of Lemma \ref{alphsubst}\,(i) it suffices to show that ${\phi}_{c}(u) = {\phi}_{c}(v)$. Let $d \in D$ and set $B = \{ c, d \}$. Then either ${\phi}_{\{c,d\}}(u)$ and ${\phi}_{\{c,d\}}(v)$ lie in $Pal_{n}({\Sigma})$ or ${\phi}_{\{c,d\}}(u) = {\phi}_{\{c,d\}}(v) \notin Pal_{n}({\Sigma})$. In the former case the fact that ${\phi}_{c}(u), {\phi}_{c}(v) \in Pal_{n}({\Sigma})$ and Lemma \ref{alphsubst}\,(iv) yield
${\phi}_{d}(u), {\phi}_{d}(v) \in Pal_{n}({\Sigma})$, which contradicts the fact that $d \in D$.
In the latter one Lemma \ref{alphsubst}\,(iii) finally yields ${\phi}_{c}(u) = {\phi}_{c}(v)$, as required.

\smallskip

{\em (ii) $n$ is odd}. Our aim is to show that either $u=v$ or $u = {\tilde{v}}$.
First we show that either ${{\phi}_{B}}(u) = {{\phi}_{B}}(v)$ or ${{\phi}_{B}}(u) = {\phi}_{B}({\tilde v})$.
If $l^{*}({\phi}_{B}(u)) = l^{*}({\phi}_{B}(v)) \neq 0$ this follows immediately by
Conjecture \ref{conjtwinantiZ} which is assumed to hold for $\Sigma$. If, on the other hand, $l^{*}({\phi}_{B}(u)) = l^{*}({\phi}_{B}(v)) = 0$ then both ${{\phi}_{B}}(u)$ and ${{\phi}_{B}}(v)$ lie in $\{ 0^{n}, 1^{n} \}$. We claim that ${{\phi}_{B}}(u) = {{\phi}_{B}}(v)$.
If this is not the case, without loss of generality, ${{\phi}_{B}}(u) = 1^{n}$ and ${{\phi}_{B}}(v) = 0^{n}$. By the definition of ${\phi}_{B}$ it follows that $alph(u) \subseteq B$ and $alph(v) \subseteq A \setminus B$. On the other hand having assumed that $l^{*}(u) = l^{*}(v) \neq 0$
we also get $alph(u) = alph(v)$ and we reach a contradiction.
For our fixed pair $(u,v)$ define the sub-alphabets $E$ and $F$ of $A$ as
\[ E  =  \{ a \in A \: : \: {\phi}_{a}(u) = {\phi}_{a}(v)  \} \quad  \mbox{and} \quad
   F  =  \{ a \in A \: : \: {\phi}_{a}(u) = {\phi}_{a}({\tilde{v}}) \} . \]
It follows that $A = E \cup F$. It remains to show that either $E = A$ or $F = A$.
Suppose, for the sake of contradiction, that $e \in E \setminus F$ and $f \in F \setminus E$.
Then if we let $B = \{ e,f \}$ we either get ${\phi}_{\{ e,f \}}(u) = {\phi}_{\{ e,f \}}(v)$ or
${\phi}_{\{ e,f \}}(u) = {\phi}_{\{ e,f \}}({\tilde v})$.
By Lemma \ref{alphsubst}\,(iii) the former equality together with the fact that $e \in E$ yields ${\phi}_{f}(u) = {\phi}_{f}(v)$,
so that $f$ will also lie in $E$, which is a contradiction since we took $f \in F \setminus E$. We get a
similar contradiction starting from the latter equality.

\medskip

{\bf (2) $\mathbf {l^{*}(u) = - \, l^{*}(v) }$}. It means that $u + v \in \ker l^{*}_{A}$ so by Lemma \ref{l*2alph}\,(i)
${\phi}_{B}(u + v) \in \ker l^{*}_{\Sigma}$ and therefore $l^{*}({\phi}_{B}(u)) = - \, l^{*}({\phi}_{B}(v))$, for each subset $B$ of $A$.
The case where $n$ is even is dealt in an analogous way as in case 1(i) before; we leave the details to the reader.

If $n$ is odd, by Conjecture \ref{conjtwinantiZ}, we can not have $l^{*}({\phi}_{B}(u)) = - \, l^{*}({\phi}_{B}(v)) \neq 0$.
Thus for each $B \subseteq A$ both ${\phi}_{B}(u)$ and  ${\phi}_{B}(v)$ lie in $\{ 0^{n}, 1^{n} \}$.
This will hold in particular, for each singleton $B = \{a\}$, with $a \in A$.
Then if ${\phi}_{a}(u) = 1^{n}$ for some $a \in A$ we obtain $u = a^{n}$, a contradiction.
Therefore ${\phi}_{a}(u) = {\phi}_{a}(v) = 0^{n}$ for each $a \in A$. Then $alph(u) \cap A = \emptyset$, which also clearly can not hold.
\hfill $\square$

\bigskip


\section{Calculation of $\mathbf {l^{*}}$}\label{l*shuffle}

We will now generalize the effective definition (\ref{defrecl*}) of $l^*$ using the shuffle product of words and calculate the
polynomial $l^{*}(w)$ recursively in terms of all factors $u$ of fixed length $r \geq 1$ of $w$.
By a factor of a word $w$ we mean a word $u$ such that there exist $s, t \in A^{*}$ with $w = sut$.

\begin{proposition}\label{recl*shuffle}
Let $w$ be a word and $r$ be a positive integer with $r \leq |w|$.
Consider the set of all factors $u$ of length $r$ of $w$. Then
\[l^{*}(w) \: = \: \sum_{{w = sut} \atop {|u|=r}} \, l^{*}(u) \, (-1)^{|s|} \, \{ \tilde {s} \, \sh \, \,  t \} \, . \]
\end{proposition}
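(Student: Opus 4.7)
The plan is to prove the formula by induction on $|w|$, treating all valid $r$ simultaneously. The base case $|w| = 1$ forces $r = 1$ and reduces to $l^{*}(a) = a$, which holds by definition. The extreme case $r = |w|$ is immediate for any $w$, since the unique factorization $s = t = \epsilon$, $u = w$ gives $l^{*}(w) \cdot (\epsilon \, \sh \, \epsilon) = l^{*}(w)$.

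For the main inductive step, with $n = |w| \geq 2$ and $1 \leq r < n$, I would write $w = \alpha w_{0} \beta$ with $\alpha, \beta \in A$ its first and last letters, and apply the recursive definition (\ref{defrecl*}) to obtain $l^{*}(w) = l^{*}(\alpha w_{0}) \, \beta - l^{*}(w_{0} \beta) \, \alpha$. Since $|\alpha w_{0}| = |w_{0} \beta| = n - 1 \geq r$, I would expand each of $l^{*}(\alpha w_{0})$ and $l^{*}(w_{0} \beta)$ by the induction hypothesis as a sum over factors of length $r$, and then match terms with the right-hand side of the claim after indexing factorizations $w = sut$ by the starting position of $u$ in $w$.

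At the two boundary positions the matching is direct: when $u$ is a prefix of $w$ the contribution comes only from $l^{*}(\alpha w_{0}) \, \beta$, and when $u$ is a suffix it comes only from $-l^{*}(w_{0} \beta) \, \alpha$, with the identity $\widetilde{s'} \alpha = \widetilde{\alpha s'}$ (applied to the prefix $s = \alpha s'$ flanking $u$) producing the required reversal. The essential case is an interior factor $u$, to which both expansions contribute; after tracking the sign shift caused by re-indexing positions of $w_{0} \beta$ as positions of $w$, the combined contribution takes the form
\[ l^{*}(u) \, (-1)^{|s|} \bigl[ (\widetilde{s'} \, \sh \, t) \, \alpha \; + \; (\tilde{s} \, \sh \, t') \, \beta \bigr], \]
where $s = \alpha s'$ and $t = t' \beta$ are the prefix and suffix of $w$ flanking $u$. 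This collapses to the desired $l^{*}(u) \, (-1)^{|s|} \, (\tilde{s} \, \sh \, t)$ via the ``last-letter'' form of the shuffle recursion,
\[ x \alpha \, \sh \, y \beta \; = \; (x \, \sh \, y \beta) \, \alpha \; + \; (x \alpha \, \sh \, y) \, \beta, \]
applied with $x = \widetilde{s'}$ and $y = t'$; this identity is an easy consequence of (\ref{recdefshuffle}) together with the commutativity of the shuffle product.

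The main obstacle I anticipate is purely bookkeeping: carefully matching the signs $(-1)^{|s_{1}|}$ and $(-1)^{|s_{2}|}$ arising from the two induction expansions against $(-1)^{|s|}$ in the claim, correctly shifting indices when translating between factorizations of $\alpha w_{0}$, of $w_{0} \beta$, and of $w$, and handling the degenerate subcases where $s'$ or $t'$ is empty (which pose no real difficulty since the last-letter shuffle recursion covers these cases as well). Once the last-letter shuffle identity is in hand, the algebraic heart of the inductive step condenses into that single application.
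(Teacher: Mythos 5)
Your proof is correct, but it runs the induction in the opposite direction from the paper's. The paper fixes $w$ and inducts on $k = n - r$: assuming the formula for factors of length $r$, it applies the recursion \eqref{defrecl*} to \emph{each} factor $u_{i} = a_{i}v_{i}b_{i}$ and regroups the resulting terms via the first-letter shuffle recursion $b_{i}(\widetilde{s_{i}} \sh t_{i}) + a_{i-1}(\widetilde{s_{i-1}} \sh t_{i-1}) = \widetilde{s_{i}} \sh b_{i}t_{i}$ to descend to factors of length $r-1$. You instead fix $r$ and induct on $|w|$, applying \eqref{defrecl*} once to $w = \alpha w_{0}\beta$ at the top level and expanding $l^{*}(\alpha w_{0})$ and $l^{*}(w_{0}\beta)$ by the induction hypothesis; the recombination then happens through the mirror-image ``last-letter'' shuffle identity. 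The two arguments are dual and of comparable difficulty; yours has the mild advantage of only three positional cases (prefix, suffix, interior occurrence of $u$) against the paper's bookkeeping over all $n-r+1$ consecutive factors, while the paper's has the advantage of invoking \eqref{defrecl*} only on the factors and of yielding the $k=1$ case for free as the definition itself. One small correction: the last-letter identity $x\alpha \sh y\beta = (x \sh y\beta)\alpha + (x\alpha \sh y)\beta$ does not follow from \eqref{recdefshuffle} \emph{plus commutativity} (commutativity swaps the two arguments, it does not convert first letters into last letters); the clean derivation is via the compatibility of the shuffle with reversal, $\widetilde{u \sh v} = \tilde{u} \sh \tilde{v}$, or by a direct induction on $|x| + |y|$. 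The identity itself is standard and true, so this is a misattributed justification rather than a gap.
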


\begin{proof}
Let $|w| = n$. We argue by induction on $k = n - r$. Clearly $0 \leq k \leq n - 1$.
Note that for $k = 0$ the result is trivial since $s = t = \epsilon$.
Also for $k = 1$ it follows from the recursive definition (\ref{defrecl*}) of $l^*$ since the factors of
length $n - 1$ of $w = aub$ are just the words $au$ and $ub$.

Let $\{ u_{i} \, : \, 0 \leq i \leq n-r \}$ be the set of all $n - r + 1$ consecutive factors of length $r \geq 2$ of $w$.
We let $u_{i} = a_{i} v_{i} b_{i}$ with $a_{i}, \, b_{i} \in A$, $v_{i} \in {A}^{*}$.
Note that $v_{i} b_{i} = a_{i+1} v_{i+1}, \: 0 \leq i \leq n-r-1$.

Suppose that the result holds for the factors $u_i$. We have to show that it also holds for factors
of length $r - 1$ of $w$.
We let $w = s_{i} u_{i} t_{i}$ for $0 \leq i \leq n-r$, where $s_{0} = t_{n-r} = \epsilon$,
$s_{1} = a_{0}$ and $t_{n-r-1} = b_{n-r}$. Then $| \widetilde {s_{i}} | = |s_{i}| = i$, for each
$0 \leq i \leq n-r$ and our induction hypothesis for factors of length $r$ yields
\begin{equation*}
l^{*}(w)  =  l^{*}(u_{0}) t_{0} \, + \,
               \sum_{i=1}^{n-r-1} \, l^{*}(u_{i}) \, (-1)^{i} \, \{ \widetilde {s_{i}} \, \sh \, \,  t_{i} \}
               \, + \, l^{*}(u_{n-r}) \, (-1)^{n-r} \, \widetilde{s_{n-r}} \, .
\end{equation*}
We apply the recursive formula (\ref{defrecl*}) on factors $u_{i} = a_{i} v_{i} b_{i}$ and obtain
\begin{eqnarray*}
l^{*}(w) & = & \Big[ l^{*}(a_{0} v_{0}) b_{0}  \, - \, l^{*}(v_{0} b_{0}) a_{0} \Big] \, t_{0}  \, + \,
               \sum_{i=1}^{n-r-1} \, \Big[ l^{*}(a_{i} v_{i}) b_{i} \, - \, l^{*}(v_{i} b_{i}) a_{i} \Big]
                \, (-1)^{i} \, \{ \widetilde {s_{i}} \, \sh \, \,  t_{i} \}  \\
         &   & \, + \, \Big[ l^{*}(a_{n-r} v_{n-r}) b_{n-r}  \, - \, l^{*}(v_{n-r} b_{n-r}) a_{n-r} \Big] \,
                (-1)^{n-r} \, \widetilde{s_{n-r}} \, .
\end{eqnarray*}
Since $v_{i} b_{i} = a_{i+1} v_{i+1}$ for $0 \leq i \leq n-r-1$, grouping all elements of the form
$a_{i} v_{i}$ for $0 \leq i \leq n-r$ we obtain
\begin{eqnarray*}
\displaystyle l^{*}(w) & = & l^{*}(a_{0} v_{0}) \, (-1)^{0} b_{0} t_{0}   \, +  \,
               l^{*}(a_{1} v_{1}) \, (-1)^{1} \,  \{ a_{0} t_{0} \, + \,
               b_{1} \, ( a_{0} \sh \, t_{1} ) \} \, + \, \\
         &   & \sum_{i=2}^{n-r-1} \, l^{*}(a_{i} v_{i}) \, (-1)^{i} \,
    \{ b_{i} \, ( \widetilde{s}_{i} \sh \, t_{i} ) \, + \, a_{i-1} \, ( \widetilde{s_{i-1}} \sh \, t_{i-1} ) \} \, + \, \\
         &   &  l^{*}(a_{n-r} v_{n-r}) \, (-1)^{n-r} \, \{ b_{n-r} \widetilde{s_{n-r}} \, + \,
                                          a_{n-r-1} \, ( \widetilde{s_{n-r-1}} \sh \, t_{n-r-1} ) \} \, + \, \\
         &   & l^{*}(v_{n-r} b_{n-r}) \, (-1)^{n-r+1} \, a_{n-r} \widetilde{s_{n-r}} \, .
\end{eqnarray*}
Since $s_{i} = s_{i-1} a_{i-1}$ and $t_{i-1} = b_{i} t_{i}$ for $1 \leq i \leq n-r$ (note the extreme cases
$s_{1} = a_{0}$ for $i = 1$ and $t_{n-r-1} = b_{n-r}$ for $i = n-r$), from the recursive definition (\ref{recdefshuffle}) of the shuffle product
we get
$$b_{i} \, ( \widetilde{{s}_{i}} \sh \, t_{i} ) \, + \, a_{i-1} \, ( \widetilde{s_{i-1}} \sh \, t_{i-1} )
\, = \, a_{i-1} \widetilde{s_{i-1}} \sh \, b_{i} t_{i} \, = \, \widetilde{{s}_{i}} \sh \, b_{i} t_{i},$$
for all $i \in \{ 1, \ldots , n-r \}$.
Then we immediately obtain
\begin{eqnarray*}
l^{*}(w) & = & l^{*}(a_{0} v_{0}) \, (-1)^{0} \{ {\epsilon} \sh \, b_{0} t_{0} \}  \, +  \,
               \sum_{i=1}^{n-r} \, l^{*}(a_{i} v_{i}) \, (-1)^{i} \, \{ \widetilde{{s}_{i}} \sh \, b_{i} t_{i} \} \, + \, \\
         &   & l^{*}(v_{n-r} b_{n-r}) \, (-1)^{n-r+1} \, \{ \widetilde{s_{n-r} a_{n-r}} \sh \, {\epsilon} \} \, ,
\end{eqnarray*}
which is precisely the required summation for factors of length $r-1$ we had aimed for.
\end{proof}

{\em Remarks}. The case where the factors $u$ of the word $w$ are letters (\textit{i.e.}, we are at the bottom
level $r=1$) seems to be known in the literature (see \cite[Ex. 4.6.5, p.126]{Diek + Rozen})
even for the broader class of free partially commutative Lie algebras.
On the other hand, Proposition \ref{recl*shuffle} clearly does not hold if we consider trivial factors of $w$, \textit{i.e.}, factors of length $r=0$.
In this case the identity
$\displaystyle 0 \: = \: \sum_{w = st} \,  (-1)^{|s|} \, \{ \tilde {s} \, \sh \, \,  t \}$,
due to W. Schmidt, holds; see \cite[\S 1.6.4]{Reut}.

\bigskip

We will now use Proposition \ref{recl*shuffle} to reobtain - in a non ad hoc way - the result by Duchamp and Thibon
\cite[\S 3, p.124]{Duch + Thib} for the calculation of the support of the free Lie ring.

\smallskip

\begin{theorem}\label{DuchThib}
The words that vanish under the adjoint endomorphism $l^{*}$ of the left normed Lie bracketing $\, l$
of the free Lie ring are either powers of a single letter with exponent greater than one or
palindromes of even length.
\end{theorem}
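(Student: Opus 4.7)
The plan is to prove both directions separately. For the easy direction, if $w = a^n$ with $n > 1$, induction on $n$ via \eqref{defrecl*} yields $l^{*}(a^n) = l^{*}(a^{n-1})\,a - l^{*}(a^{n-1})\,a = 0$, and if $w$ is a palindrome of even length, Lemma \ref{l*reversal} gives $l^{*}(w) = l^{*}(\tilde w) = -l^{*}(w)$, whence $l^{*}(w) = 0$ in $\mathbb{Z}\langle A\rangle$.

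For the converse, let $w = a_1 a_2 \cdots a_n$ be neither a power nor an even palindrome; the aim is to exhibit a word $v$ with $(l^{*}(w),v)\neq 0$. Applying Proposition \ref{recl*shuffle} at $r = 1$,
\[
  l^{*}(w) \;=\; \sum_{i=1}^n (-1)^{i-1}\, a_i \cdot \bigl(\widetilde{a_1 \cdots a_{i-1}} \sh a_{i+1}\cdots a_n\bigr),
\]
and grouping by leading letter, $(l^{*}(w),cv') = \sum_{i\in P_c}(-1)^{i-1}\bigl(\widetilde{a_1\cdots a_{i-1}}\sh a_{i+1}\cdots a_n,\,v'\bigr)$ where $P_c = \{i: a_i = c\}$. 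If some letter $c$ occurs at a unique position $i_0$, the sum reduces to a single term $(-1)^{i_0-1}$ times a shuffle coefficient; the shuffle of two (possibly empty) words is a nonzero polynomial with non-negative integer coefficients, so any monomial in its support yields an immediate witness.

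If every letter of $w$ repeats then $n \geq 4$, and I proceed by strong induction on $n$ using Proposition \ref{recl*shuffle} at $r = n-1$:
\[
  l^{*}(w) \;=\; l^{*}(a_1\cdots a_{n-1})\,a_n \;-\; l^{*}(a_2\cdots a_n)\,a_1.
\]
When $a_1 \neq a_n$, the two summands live in disjoint subspaces (distinguished by last letter), so $l^{*}(w) = 0$ forces both proper-factor values to vanish; by induction each is a power or an even palindrome, and the resulting letter constraints ($a_1 = \cdots = a_{n-1}$ from a power, or $a_i = a_{n-i}$ together with $a_j = a_{n+2-j}$ from two palindromes forcing all odd-indexed letters to coincide and similarly all even-indexed ones) either leave some letter of $w$ unique or force $a_1 = a_n$---in every combination a contradiction. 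When $a_1 = a_n = a$, write $w = aua$ so that $l^{*}(w) = (l^{*}(au) - l^{*}(ua))\,a$; the sub-cases where both or only one of $l^{*}(au)$, $l^{*}(ua)$ vanish are handled by the same inductive propagation, each collapsing to $w = a^n$ and hence to a contradiction.

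The principal obstacle is the remaining sub-case: $l^{*}(au) = l^{*}(ua) \neq 0$ with $w = aua$ non-excluded. This equality is precisely the non-trivial content of Conjecture \ref{conjtwinantiZ}. To dispatch it within the proof of Theorem \ref{DuchThib}, the plan is to apply Proposition \ref{recl*shuffle} simultaneously at $r = 1$ to both $au$ and $ua$, equate the shuffle expansions, and---via Lemma \ref{l*1b} for the base patterns $a^{k}ba^{l}$---derive that $u$ must be either $a^{n-2}$ (making $w = a^n$) or a palindrome with $n$ even (making $w$ an even palindrome), in either case contradicting the hypothesis. This combinatorial extraction is the hardest step; in \cite{Duch + Thib} it is effected via an ad hoc family of Lie polynomials, whereas Proposition \ref{recl*shuffle} supplies the natural shuffle-theoretic vehicle for the same argument.
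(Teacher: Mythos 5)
Your easy direction and your preliminary reductions are fine, but the proposal has a genuine gap at exactly the crux of the theorem. Decomposing $w = aua$ via $r = n-1$ gives $l^{*}(w) = \bigl(l^{*}(au) - l^{*}(ua)\bigr)a$, and the case you yourself flag as the ``principal obstacle'' --- $l^{*}(au) = l^{*}(ua) \neq 0$ --- is left as a plan rather than an argument. Worse, closing it the way you describe amounts to proving a special case of Conjecture \ref{conjtwinantiZ}: if that conjecture held, $l^{*}(au) = l^{*}(ua) \neq 0$ would force $au = ua$ (whence $w = a^{n}$) or $au = \widetilde{ua} = a\tilde{u}$ (whence $u = \tilde u$ with $n$ even and $w$ an even palindrome). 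But Conjecture \ref{conjtwinantiZ} is stated in this paper as \emph{open}, so an appeal to ``equate the shuffle expansions at $r=1$ and extract the conclusion via Lemma \ref{l*1b}'' is not a proof --- it is a restatement of the hard problem. No concrete mechanism is given for why the equality of the two $r=1$ expansions should force $u$ to be a power or a palindrome.

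The paper avoids this trap by choosing a different decomposition in the case where $w$ begins and ends with the same letter: it writes $w = a^{k}\,bvc\,a^{l}$ with $b,c \neq a$ (peeling off the maximal powers of $a$ at both ends) and applies Proposition \ref{recl*shuffle} at factor length $|v|+2$. The point is that among all factorizations $w = sxt$, only $(s,x,t) = (a^{k}, bvc, a^{l})$ produces shuffles $\tilde s \sh t$ equal to $a^{k+l}$; hence the monomials of $l^{*}(w)$ ending in $a^{k+l}$ are exactly those of $\binom{k+l}{k}\,l^{*}(bvc)\,a^{k+l}$, and $l^{*}(w)=0$ forces $l^{*}(bvc)=0$ in characteristic zero. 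This feeds the induction on the strictly shorter word $bvc$ without ever confronting a twin-type equality, and the residual subcases $k \neq l$ and $k = l$ are dispatched by one more application of the proposition (at factor length $|v|+l+1$) and a parity count of occurrences of $b$. If you want to salvage your structure, you should replace your $w = aua$, $r = n-1$ step by this coefficient-isolation argument; as written, your proof does not go through.
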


\begin{proof}
If $w = a^{n}$, where $n \geq 2$, then $l^{*}(w) = 0$, since
${l^{*}}(w) \, = \, {l^{*}}(a^{n-1}) \, a \, - \, {l^{*}}(a^{n-1}) \, a$.
For a palindrome $w$ of even length Lemma 2.1 yields $2 \, {l^{*}}(w) = 0$, hence ${l^{*}}(w) = 0$
since we are in characteristic zero.

\medskip

For the other direction of the theorem consider a word $w$ such that $l^{*}(w) = 0$. We will argue by induction
on the length $|w| = n$. For $n = 2$ we necessarily get $w = a^{2}$ because if $w = ab$ with $a \neq b$ then
$l^{*}(ab) = ab - ba \neq 0$, so the result follows trivially.
Let $n > 2$. We consider two cases.

\smallskip

{\bf (1)} $\mathbf{w = aub, \: (a, b \in A, \: a \neq b)}$.
Then $0 \, = \, l^{*}(w) = l^{*}(au) \, b \, - \, l^{*}(ub) \, a$. Since $a \neq b$ we get
$l^{*}(au) = l^{*}(ub) = 0$. By our induction hypothesis we have to consider two subcases.

(i) \,If at least one of the words $au$ and $ub$ (without loss of generality say $au$) is a power of a single
letter we obtain $au = a^{n-1}$, so that $ub = a^{n - 2} \, b$ which is neither a power of a single letter nor a palindrome of even length,
so we reach a contradiction.

(ii) \,If both $au$ and $ub$ are palindromes of even length and not powers of a single letter we must have
$au = s \tilde{s}$, for some $s \in A^{+}$, so that there exists a $t \in A^{*}$ with $s = at$ and $u = t \tilde{t} a$.
But then the word $ub = t \tilde{t} a b$ can not be a palindrome of even length since the number of occurrences $|t \tilde{t} a|_{a}$
of the letter $a$ in the word $t \tilde{t} a$ is equal to $2 |t|_{a} + 1$, an odd positive integer and we obtain another contradiction.

\medskip

{\bf (2)} $\mathbf{w = a^{k} \, bvc \, a^{l}, \: (a,b,c \in A, \: b, c \neq a)}$.
We consider all factors $x$ of length $|v| + 2$ of $w$. Then Proposition \ref{recl*shuffle} yields
\begin{eqnarray*}
l^{*}(w) & = & l^{*}(bvc) \, (-1)^{k} \, \{ a^{k} \sh \, a^{l} \} \quad + \,
               \sum_{{w = sxt} \atop {(s,t) \neq (a^{k},a^{l})}} \,
               l^{*}(x) \, (-1)^{|s|} \, \{ \tilde{s} \sh \, t \} \\
         &  = & (-1)^{k} {{k + l} \choose k} \, l^{*}(bvc) \, a^{k+l} \quad + \,
               \sum_{{w = sxt} \atop {(s,t) \neq (a^{k},a^{l})}} \,
               l^{*}(x) \, (-1)^{|s|} \, \{ \tilde{s} \sh \, t \}.
\end{eqnarray*}
For any factorization $w = sxt$ other than the one where $(s,x,t) = (a^{k}, bvc, a^{l})$, each shuffle
of $\tilde{s}$ and $t$ contains other letters except $a$ so it is different from $a^{k+l}$, which appears only as a shuffle of $a^k$ and $a^l$.
From this we deduce that the monomials of $l^{*}(w)$ such that a power of the letter $a$ appears as a
right factor with maximum possible exponent are precisely the monomials in
${{k + l} \choose k} \, l^{*}(bvc) \, a^{k+l}$.
Now, the assumption $l^{*}(w) = 0$ yields ${{k+l} \choose l} \, l^{*}(bvc) = 0$ and since we are in characteristic
zero we immediately obtain $l^{*}(bvc) = 0$. Then by our induction hypothesis we get $b = c$ and $v$ is a power of
$b$ or a palindrome of even length. Two subcases have to be considered.

(i) \,If $k \neq l$ (without loss of generality say $k < l$) we consider factors $y$ of length $|v| + l + 1$
of $w$ and apply again Proposition \ref{recl*shuffle}. We obtain
\begin{equation}
 l^{*}(w) \: = \: \sum_{{w = syt} \atop {(s,t) \neq (a^{k}b, {\epsilon})}}
 l^{*}(y) \, (-1)^{|s|} \, \{ \tilde{s} \sh \, t \} \quad + \quad l^{*}(vba^{l}) \, (-1)^{k+1} \, b \, a^{k}. \label{vbal}
\end{equation}
In this case every shuffle of the words $\tilde{s}$ and $t$ from each term of the first summand of \eqref{vbal} will be equal to $a^{k+1}$.
Assuming that $l^{*}(w) = 0$ yields $(-1)^{k+1} l^{*}(vba^{l}) ba^{k} \, + \, P a^{k+1} \, = \, 0$,
for some polynomial $P \in {\mathbb Z}{\langle A \rangle}$. But then $l^{*}(vba^{l}) = P = 0$, so by our induction hypothesis the word $vba^{l}$
has to be a palindrome of even length. When $v$ is a power of $b$ this clearly can not happen. It remains to check the case where both $bvb$ and $vba^{l}$ are simultaneously palindromes of even length. Then $|v|_{b}$ would be an even positive integer in the former case, whereas $|v|_{b} = |vba^{l}|_{b} - 1$ would be odd in the latter; a clear contradiction.

(ii) \,Suppose that $k = l$. If $bvb = b^{q}$ with $q$ an odd positive integer then by assumption
$0 = l^{*}(w) = l^{*}(a^{k} \, b^{q} a^{k}) = 2 \, l^{*}(a^{k} \, b^{q} \, a^{k-1}) \, a$, so that
$l^{*}(a^{k}b^{q}a^{k-1}) = 0$ which is a contradiction since the word $a^{k}b^{q}a^{k-1}$ can not be a palindrome of even length.
So we are finally left with the case where $bvb$ is a palindrome of even length which is what we had originally aimed for.
\end{proof}

\smallskip

\begin{theorem}\label{alphbound}
Let $K$ be a commutative ring with unity and suppose that $l^{*}(w) = 0$ for a word $w \in A^{*}$.
Then $|alph(w)| \leq  {\lceil |w|/2 \rceil}$.
\end{theorem}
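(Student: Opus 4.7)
The plan is to argue by contradiction: suppose $l^{*}(w) = 0$ in $K \langle A \rangle$ while $|alph(w)| > \lceil n/2 \rceil$, where $n = |w|$. If every letter of $w$ occurred at least twice we would have $n \geq 2 |alph(w)| > 2 \lceil n/2 \rceil \geq n$, so pigeonhole forces some letter $a$ to occur in $w$ exactly once, at some position $i_{0}$. Write $w = u a v$ with $i_{0} = |u| + 1$ and $a \notin alph(u) \cup alph(v)$.

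Apply Proposition \ref{recl*shuffle} with $r = 1$: the summand corresponding to $i = i_{0}$, namely $(-1)^{i_{0}-1} a \cdot \{\tilde{u} \sh v\}$, is the unique source of monomials of $l^{*}(w)$ whose first letter is $a$, since for $i \neq i_{0}$ the leading letter $w_{i}$ belongs to $A \setminus \{a\}$. Hence $l^{*}(w) = 0$ forces the shuffle identity $\tilde{u} \sh v = 0$ in $K \langle A \setminus \{a\} \rangle$. When $u$ or $v$ is empty this shuffle collapses to a single nonempty word, nonzero in $K$ since $1_{K} \neq 0$; a contradiction. So assume both $u, v$ are nonempty, and aim to exhibit a monomial in $\tilde{u} \sh v$ whose coefficient over $\mathbb{Z}$ equals $\pm 1$, giving a contradiction in any unital $K$.

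A natural candidate is the concatenation $\tilde{u} v$, whose multiplicity in $\tilde{u} \sh v$ equals $\binom{p + q}{p}$, where $p$ (resp. $q$) is the length of the maximal final run (resp. initial run) of the common boundary letter in $\tilde{u}$ (resp. $v$); this equals $1$ whenever $u$ and $v$ begin with distinct letters, and a symmetric argument using $v \tilde{u}$ handles the case when $u$ and $v$ end with distinct letters. The main obstacle is the residual configuration in which $u$ and $v$ begin \emph{and} end with the same letter, forcing both natural shuffle coefficients to be binomial coefficients that may vanish in $K$ (for instance $\tilde{u} = v$ over $\mathbb{Z}_{2}$). To handle it, I would exploit the refined pigeonhole bound that guarantees at least two single-occurrence letters, choosing $a$ so that at least one of the boundary coincidences fails; failing that, I would fall back on an induction on $n$ via the recursion $l^{*}(w) = l^{*}(w_{1} \cdots w_{n-1}) w_{n} - l^{*}(w_{2} \cdots w_{n}) w_{1}$ with the case split on $w_{1}$ versus $w_{n}$ used in the proof of Theorem \ref{DuchThib}.
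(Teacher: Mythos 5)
Your opening reduction is correct and is a genuinely different route from the paper's: the pigeonhole argument does yield a letter $a$ occurring exactly once in $w$, the $r=1$ case of Proposition \ref{recl*shuffle} does show that the term $(-1)^{|u|}\, a\,\{\tilde{u} \sh v\}$ is the only source of monomials of $l^{*}(w)$ beginning with $a$, and hence $l^{*}(w)=0$ forces $\tilde{u} \sh v = 0$ in $K\langle A\rangle$. (The paper instead argues by induction on $|w|$, splitting on whether $w$ begins and ends with the same letter, and uses the $r=1$ recursion only for the subsidiary claim that in $w=aub$ with $a\neq b$ both $a$ and $b$ must occur in $u$.)

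The second half of your argument, however, has two genuine gaps. First, the multiplicity formula you invoke is false: the coefficient of the concatenation $\tilde{u}v$ in $\tilde{u} \sh v$ is the number of ways of realizing $\tilde{u}v$ as an interleaving of $\tilde{u}$ and $v$, and this is not governed by the boundary runs alone. For example $ab \sh ab = 2\,abab + 4\,aabb$, so the concatenation $abab$ has coefficient $2$ although the boundary letters $b$ and $a$ are distinct, and over ${\mathbb Z}_{2}$ this entire shuffle vanishes. Second, the ``residual configuration'' you set aside is not a technicality but the crux: when $w = u a \tilde{u}$ is an odd palindrome one has $\tilde{u} \sh v = \tilde{u} \sh \tilde{u}$, which is identically zero over ${\mathbb Z}_{2}$ (the involution exchanging the two copies pairs off the interleavings without fixed points), so no choice of monomial can rescue the argument for that letter $a$; one must pass to a different single-occurrence letter, and the claim that some single-occurrence letter either sits at an end of $w$ or avoids one of the two boundary coincidences is precisely the combinatorial statement that needs proof. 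It is in fact provable --- each ``bad'' single-occurrence letter forces flanking occurrences of $w_{1}$ on its right and of $w_{n}$ on its left, and counting these positions against the hypothesis $|alph(w)| > \lceil n/2\rceil$ gives $k\le 1$ for the number $k$ of single-occurrence letters, contradicting the pigeonhole bound $k\ge 2$ --- but nothing of the sort appears in your write-up, and your final fallback (``induction on $n$ with the case split used in the proof of Theorem \ref{DuchThib}'') is an appeal to the paper's actual argument rather than a completion of yours. As written, the proposal establishes only the reduction to a nonvanishing statement about shuffles.
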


\begin{proof}
Consider a word $w$ such that $l^{*}(w) = 0$. We will argue by induction
on the length $|w| = n$. For $n = 2$ we clearly get $w = a^{2}$ and the result follows trivially.
Let $n > 2$. We consider two cases.

\smallskip

{\bf (1)} $\mathbf{w = aub, \: (a, b \in A, \: a \neq b)}$.
Then $0 \, = \, l^{*}(w) = l^{*}(au) \, b \, - \, l^{*}(ub) \, a$. Since $a \neq b$ we get
$l^{*}(au) = l^{*}(ub) = 0$. Our first claim is that both letters $a$ and $b$ have to lie in $alph(u)$.
Indeed, suppose for the sake of contradiction, that - without loss of generality - $a \notin alph(u)$.
Let us consider all factors of length $1$ of $w = aub$ and apply Proposition \ref{recl*shuffle}. We obtain
\begin{equation*}
 \displaystyle l^{*}(w)  = \, a \, ub \quad + \, \sum_{{w = sct} \atop {c \, \in \, alph(ub)}} \,
              (-1)^{|s|} \, c \, \{ \tilde{s} \sh \, t \} \, .
\end{equation*}
Since $a \notin alph(u)$ and $a \neq b$, we have $c \neq a$, hence the only monomial of $l^{*}(w)$ that starts with the letter $a$
is the word $aub$ which cannot be cancelled and therefore $l^{*}(w) \neq 0$, a contradiction.

Having obtained that $a, b \in alph(u)$, we get $alph(w) = alph(u)$.
Our result then follows since, by our induction hypothesis, $|alph(u)| \leq \lceil (n-2)/2 \rceil \leq \lceil n/2 \rceil$.

\medskip

{\bf (2)} $\mathbf{w = aua, \: (a \in A)}$.
We define $r(w) \: = \: \max \, \{ \, |s| \: : \: w = sut \, , \: |s| = |t| \, , \: alph(s) = alph(t) \, \}$.
Since $w$ starts and ends with the same letter, $r(w)$ is a well defined positive integer.
Let $p$ and $q$ be respectively the left and the right factor of $w$ of length equal to $r(w)$.
There are three cases to consider: either $w = pq$; $w = pbq$ with $b \in A$; or finally $w = pbucq$, where $u \in A^{*}$,
$b, c$ are distinct letters and at least one of them, without loss of generality say $b$, does not lie in $alph(p)$.
In the first case our result follows immediately since clearly $|alph(w)| = |alph(p)| \leq |p|  = |w|/2 = \lceil |w|/2 \rceil$.
Similarly in the second one $|alph(w)| \leq |alph(p)| + 1 \leq |p| + 1 = \lceil |w|/2 \rceil$.
Finally in the third one $alph(w) = alph(pbuc) = alph(p) \cup alph(buc)$, hence
$|alph(w)| \leq |alph(p)| + |alph(buc)|$. Since $b \neq c$ case (1) yields $|alph(buc)| \leq \lceil |buc|/2 \rceil$, so that
$|alph(w)| \leq |p| + \lceil |buc|/2 \rceil = \lceil (2|p| + |buc|)/2 \rceil = \lceil |w|/2 \rceil$.
\end{proof}

\medskip

\begin{corollary}\label{c(w)=1}
If $w$ is a word of $A^{*}$ with $|alph(w)| > {\lceil |w|/2 \rceil}$ then $c(w) = 1$.
\end{corollary}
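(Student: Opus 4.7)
The plan is to obtain this corollary as an essentially immediate consequence of Theorem \ref{alphbound}, once one recognizes that Theorem \ref{alphbound} is stated for an \emph{arbitrary} commutative ring $K$ with unity; in particular it applies over every $\mathbb{Z}_{m}$. My strategy is therefore to read off the contrapositive of Theorem \ref{alphbound} in each such $K$ and then translate the resulting non-vanishing statements back into divisibility conditions on $c(w)$ via Theorems \ref{c(w)gcd} and \ref{suppm|c(w)}\,(i).

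Concretely, fix $w$ with $|alph(w)| > \lceil |w|/2 \rceil$. First I would apply the contrapositive of Theorem \ref{alphbound} with $K = \mathbb{Z}$ to conclude that $l^{*}(w) \neq 0$, so that $c(w) \neq 0$ by Theorem \ref{c(w)gcd}. Next, for each integer $m > 1$, I would apply the same contrapositive with $K = \mathbb{Z}_{m}$ to get $\overline{l^{*}}(w) \neq \overline{0}$ in $\mathbb{Z}_{m} \langle A \rangle$. Invoking the identity $\theta(l^{*}(w)) = \overline{l^{*}}(w)$ (established just before Theorem \ref{suppm|c(w)}) together with $\ker \theta = (m)\,\mathbb{Z}\langle A \rangle$, this translates to $l^{*}(w) \notin (m)\,\mathbb{Z}\langle A \rangle$, which by Theorem \ref{suppm|c(w)}\,(i) is equivalent to $m \nmid c(w)$.

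Combining these two facts, $c(w)$ is a positive integer divisible by no $m > 1$, which forces $c(w) = 1$ as required. No step here is a serious obstacle: all the heavy lifting has already been done in Theorem \ref{alphbound}, and the only real subtlety is recognizing that its statement over $K = \mathbb{Z}_{m}$ directly supplies the non-divisibility $m \nmid c(w)$ once paired with Theorem \ref{suppm|c(w)}\,(i).
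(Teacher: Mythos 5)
Your proposal is correct and is essentially the paper's own argument presented in contrapositive form: the paper assumes $c(w) = m \neq 1$ and derives a contradiction by applying Theorem \ref{alphbound} over $\mathbb{Z}$ (for $m=0$) and over $\mathbb{Z}_{m}$ (for $m>1$) via Theorem \ref{suppm|c(w)}\,(i), which is exactly the chain of implications you run in the forward direction. No substantive difference.
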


\begin{proof}
Suppose that $c(w) = m$ where $m$ is a non-negative integer with $m \neq 1$. If $m=0$ then $l^{*}(w) = 0$ over $\mathbb Z$, hence by
Theorem \ref{alphbound} for $K = \mathbb Z$ we get $|alph(w)| \leq {\lceil |w|/2 \rceil}$, a contradiction.
If $m > 1$ then $l^{*}(w) \in (m) \, {\mathbb Z} \langle A \rangle$ by Theorem \ref{suppm|c(w)}\,(i). Thus if $\overline{l^{*}}$ is the adjoint of
the left normed Lie bracketing $\overline{l}$ over ${\mathbb Z}_{m}$, we get $\overline{l^{*}}(w) = \overline{0}$.
Once more Theorem \ref{recl*shuffle} for $K = {\mathbb Z}_{m}$ yields $|alph(w)| \leq {\lceil |w|/2 \rceil}$, the same contradiction.
\end{proof}

\smallskip

Recall that by Lemma \ref{l*2alph}\,(iii) if $A$ and $\Sigma$ are finite alphabets with $|A| > |{\Sigma}| \geq 2$ and $\phi$ is a literal morphism from $A^{*}$ onto ${\Sigma}^{*}$ then ${\phi}(\ker l_{A}^{*}) = \ker l_{\Sigma}^{*}$ and $\ker l_{A}^{*}$ is a proper subset of
${\phi}^{-1}(\ker l_{\Sigma}^{*})$.
It is worth asking the following: is it possible to have a solution $w$ of the equation $l_{\Sigma}^{*}(w) = 0$ with $alph(w) = \Sigma$
that can not be the image under {\em any} literal surjective morphism $\phi: A^{*} \twoheadrightarrow {\Sigma}^{*}$ of a corresponding
solution $u$ of the equation $l_{A}^{*}(u) = 0$ with $alph(u) = A$?
If $|w| = n$ Theorem \ref{alphbound} implies that our question makes sense when in fact ${\lceil n/2 \rceil} \geq |A| > |{\Sigma}|$.

The following example demonstrates that this is indeed possible.

\begin{example}\label{example}
Set $K = {\mathbb Z}_{2}$, $A = \{a,b,c,d\}$, $\Sigma = \{e,f,g\}$ and $w = efegfef$. Then $w \in \ker l^{*}_{\Sigma}$ but
for each literal morphism $\phi$ from $A^{*}$ onto ${\Sigma}^{*}$ no word $u$ with $alph(u) = A$ and ${\phi}(u) = w$ lies in $\ker l^{*}_{A}$.
\end{example}

\begin{proof}
All calculations are made over ${\mathbb Z}_{2}$. First we show that $l^{*}_{\Sigma}(w) = 0$. Consider all factors of length $3$ in $w$
and apply Proposition \ref{recl*shuffle}.
Since $l^{*}(efe) = l^{*}(fef) = 0$ and $e \sh fef =  fe \sh ef  = efe \sh f = efef + fefe, \,$ we get
$l^{*}_{\Sigma}(w) \: = \: l^{*}_{\Sigma}(feg) \{ e \sh fef \} + l^{*}_{\Sigma}(egf) \{ fe \sh ef \} +
               l^{*}_{\Sigma}(gfe) \{ efe \sh f \}
         \: = \: \{ l^{*}_{\Sigma}(feg) + l^{*}_{\Sigma}(egf) + l^{*}_{\Sigma}(gfe) \} \{ efef + fefe \}$.
The terms in $l^{*}_{\Sigma}(feg) + l^{*}_{\Sigma}(egf) + l^{*}_{\Sigma}(gfe)$ cancel out and therefore
$l^{*}_{\Sigma}(w) = 0$.
Now consider an arbitrary surjective map $\phi$ from $A$ onto $\Sigma$ and an arbitrary word $u$ in ${\phi}^{-1}(\{w\})$ with $alph(u) = A$.
Without loss of generality we may assume that ${\phi}(a) = e, \, {\phi}(b) = f, \, {\phi}(d) = g$ and ${\phi}(c) \in \{e,f\}$. Indeed if
${\phi}(c) = g$ then $alph(u)$ is either $\{a,b,c\}$ or $\{a,b,d\}$ which in both cases is a proper subset of $A$.
We may also assume that ${\phi}(c) = f$; the case ${\phi}(c) = e$ is handled in a similar manner.
Then $u = apadqar$, where $p,q$ and $r$ are letters that lie in $\{b,c\}$. We will show that $l^{*}_{A}(u) \neq 0$.
Suppose the contrary. Since $r \neq a$ we get $l^{*}_{A}(padqar) = 0$, so by Theorem \ref{alphbound} $|alph(padqar)| \leq 3$. On the other hand,
clearly $alph(padqar) = alph(u) = A$, so that $|alph(padqar)| = 4$ and we reach a contradiction.
\end{proof}

\bigskip


\section{Combinatorial interpretation of ${\mathbf l^{*}}$}\label{lnsymm}

It is customary for many problems on free Lie algebras to boil down to
particular combinatorial questions on the group algebra of the symmetric group.
This will also be the case for the Sch\"utzenberger problems.

We start from the {\em place permutation action} of the symmetric group ${\mathfrak S}_{n}$ on $n$
letters, on the set of words of length $n$, where if $w = x_1 \ldots x_n$ and
$\sigma \in {\mathfrak S}_{n}$ we have
$(x_1 \ldots x_n ) \cdot {\sigma} = x_{{\sigma}(1)} \ldots x_{{\sigma}(n)}$.
This is a right action of ${\mathfrak S}_{n}$ that extends by linearity
to a right action of the group ring $K{{\mathfrak S}_{n}}$ on the $n$-th homogeneous component of the free associative
algebra ${K{\langle A \rangle}}$ (e.g., see \cite[\S 8.1, \S 3.3]{Reut}).
Viewing each permutation in ${\mathfrak S}_{n}$ as a word $x_1 x_{2} \ldots x_n$ in $n$ distinct letters, the left normed multi-linear
Lie bracketing of the free Lie algebra, denoted by $l_n$, can be viewed as the element of $K {\mathfrak S}_{n}$ defined by
\begin{equation}
(x_{1} x_{2} \ldots x_{n}) \cdot l_{n} \: = \: l(x_{1} x_{2} \ldots x_{n}) \, . \label{defln}
\end{equation}
For a non-negative integer $k$ let $[k]$ denote the set $\{1, 2, \ldots , k \}$, when $k \geq 1$, or the empty set, when $k = 0$.
A {\em descent} of a permutation $\sigma \in {\mathfrak S}_{n}$ is a position $i \in [n-1]$ for which ${\sigma}(i) > {\sigma}(i+1)$.
Let $D({\sigma})$ be the set of descents of $\sigma$ and for $X \subseteq  [n-1]$ let
$\displaystyle D_{X} = \sum_{D({\sigma}) = X} \!\!\! {\sigma} \, \in K{{\mathfrak S}_{n}}$.
Then the following formulae for $l_n$ are well known (see \cite[Theorem 8.16]{Reut})
\begin{eqnarray}
l_{n} & = & ({\mathbf 1} - {\zeta}_{2} )({\mathbf 1} - {\zeta}_{3}) \cdots ({\mathbf 1} - {\zeta}_{n})  \label{multln}\\
      & = & \sum_{k=1}^{n} \, (-1)^{k-1} D_{[k-1]} \, , \label{addln}
\end{eqnarray}
where $\mathbf 1$ denotes the identity permutation and ${\zeta}_{k}$ the descending $k$-cycle $(k \, \ldots \, 2 \, 1)$.
Note that in \eqref{multln} the products $\sigma \tau$ of permutations ${\sigma}, {\tau} \in {\mathfrak S}_{n}$ are to be read from right
to left: first $\tau$ and then $\sigma$.
It is well known that the elements $D_{X}$ span a subalgebra of rank $2^{n-1}$ of the group algebra ${\mathbb Q}{\mathfrak S}_{n}$,
called the {\em Solomon descent algebra} and denoted by ${\mathcal D}_{n}$  (e.g., see \cite{Scho} and cf. \cite[Chapter 9]{Reut}).
By \eqref{addln} it follows that $l_{n}$ lies in ${\mathcal D}_{n}$.

\smallskip

We also define $l^{*}_{n}$ to be the element of  $K{\mathfrak S}_{n}$ such that
\begin{equation}
(x_{1} x_{2} \ldots x_{n}) \cdot l^{*}_{n} \: = \: l^{*}(x_{1} x_{2} \ldots x_{n}) \, , \label{defln*}
\end{equation}
where $x_1 x_{2} \ldots x_n$ is a word in $n$ distinct letters and obtain the following result.

\medskip

\begin{lemma}\label{lnln*symm}
\begin{description}
\item[\em{(i)}]
Suppose that $\displaystyle l_{n} = \sum_{{{\sigma} \in {\mathfrak S}_{n}}} {\alpha}_{\sigma} {\sigma}$ and
$\: \displaystyle l^{*}_{n} = \sum_{{{\sigma} \in {\mathfrak S}_{n}}} {\beta}_{\sigma} {\sigma}$.
Then $\displaystyle {\beta}_{\sigma} = {\alpha}_{{\sigma}^{-1}}$.
\item[\em{(ii)}]
$ \quad l^{*}_{n} \: = \: ({\mathbf 1} - {\zeta}_{n}^{-1}) \cdots ({\mathbf 1} - {\zeta}_{3}^{-1})({\mathbf 1} - {\zeta}_{2}^{-1}) \, . $
\end{description}
\end{lemma}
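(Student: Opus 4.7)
The plan is to derive part (i) directly from the defining adjoint relation~\eqref{defl*} by reading off coefficients, and then to obtain part (ii) by transporting the product formula~\eqref{multln} through the canonical anti-involution $\sigma \mapsto \sigma^{-1}$ of $K{\mathfrak S}_{n}$.

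For (i), I would fix the word $u = x_{1} x_{2} \cdots x_{n}$ in $n$ distinct letters and let $v = x_{\tau(1)} x_{\tau(2)} \cdots x_{\tau(n)}$ range over its multilinear rearrangements as $\tau$ runs through ${\mathfrak S}_{n}$. Using the place permutation action and \eqref{defln*}, we have
\[
l^{*}(u) \, = \, u \cdot l^{*}_{n} \, = \, \sum_{\sigma \in {\mathfrak S}_{n}} \beta_{\sigma}\, x_{\sigma(1)} x_{\sigma(2)} \cdots x_{\sigma(n)} \, ,
\]
so the coefficient $(l^{*}(u), v)$ equals $\beta_{\tau}$. On the other hand, writing $v = y_{1} y_{2} \cdots y_{n}$ with $y_{i} = x_{\tau(i)}$ and expanding $l(v) = v \cdot l_{n}$ yields $l(v) = \sum_{\sigma} \alpha_{\sigma}\, x_{\tau\sigma(1)} \cdots x_{\tau\sigma(n)}$; the monomial $u$ occurs precisely when $\tau\sigma = {\mathbf 1}$, so $(l(v), u) = \alpha_{\tau^{-1}}$. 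The adjoint relation $(l^{*}(u), v) = (l(v), u)$ then forces $\beta_{\tau} = \alpha_{\tau^{-1}}$, which is (i).

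For (ii), consider the linear map $\omega : K{\mathfrak S}_{n} \to K{\mathfrak S}_{n}$ determined on the basis by $\omega(\sigma) = \sigma^{-1}$. Because $(\sigma\tau)^{-1} = \tau^{-1}\sigma^{-1}$, $\omega$ is an anti-homomorphism of $K$-algebras, and the reindexing $\sigma \mapsto \sigma^{-1}$ gives $\omega \bigl( \sum_{\sigma} \alpha_{\sigma} \sigma \bigr) = \sum_{\sigma} \alpha_{\sigma^{-1}} \sigma$. By part (i) the right-hand side is precisely $l^{*}_{n}$, hence $l^{*}_{n} = \omega(l_{n})$. Applying $\omega$ to the factorization $l_{n} = ({\mathbf 1} - {\zeta}_{2})({\mathbf 1} - {\zeta}_{3}) \cdots ({\mathbf 1} - {\zeta}_{n})$ from~\eqref{multln} reverses the order of the factors and replaces each $\zeta_{k}$ by $\zeta_{k}^{-1}$, which is exactly the claimed expression for $l^{*}_{n}$.

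The argument is essentially bookkeeping, and the only subtle point -- the one requiring the most care -- is tracking how permutations compose under the right place action, with the convention of~\eqref{multln} that products are read right-to-left. It is precisely this convention that produces $\tau\sigma = {\mathbf 1}$, and hence $\alpha_{\tau^{-1}}$ rather than $\alpha_{\tau}$, in the computation for (i), and that guarantees the correct order reversal of the factors when $\omega$ is applied in (ii). I do not anticipate any deeper obstacle.
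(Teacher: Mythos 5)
Your proposal is correct and follows essentially the same route as the paper: part (i) is the same coefficient computation via the adjoint relation \eqref{defl*} combined with \eqref{defln}, \eqref{defln*} and the place permutation action, and part (ii) applies the inversion map to the factorization \eqref{multln}. The only cosmetic difference is that you package the order-reversal step as the anti-involution $\omega(\sigma)=\sigma^{-1}$ of $K{\mathfrak S}_{n}$, whereas the paper proves the same fact by a short induction on the number of factors $({\mathbf 1}-\sigma_{i})$.
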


\begin{proof}
(i) The coefficients ${\beta}_{\sigma}$ and ${\alpha}_{{\sigma}^{-1}}$ are related via the canonical scalar product in $K {\langle A \rangle}$
defined by \eqref{scalarproduct} in the following way :
\begin{eqnarray*}
{\beta}_{\sigma} & = &  \bigl( (x_{1} x_{2} \ldots x_{n}) \, \cdot \, l^{*}_{n}, \quad x_{{\sigma}(1)} x_{{\sigma}(2)} \ldots x_{{\sigma}(n)} \bigr)
\qquad [\mbox{by definition}] \\
& = & \bigl( l^{*}(x_{1} x_{2} \ldots x_{n}), \quad x_{{\sigma}(1)} x_{{\sigma}(2)}\ldots x_{{\sigma}(n)} \bigr) \qquad [\mbox{ by \eqref{defln*}}] \\
& = & \bigl( l( x_{{\sigma}(1)} x_{{\sigma}(2)} \ldots x_{{\sigma}(n)}), \quad  x_{1} x_{2} \ldots x_{n} \bigr) \qquad [\mbox{by \eqref{defl*}}].
\end{eqnarray*}
Setting new variables $y_{i} = x_{{\sigma}(i)}$ for $i = 1, 2, \ldots , n$ we have $y_{{{\sigma}^{-1}}(i)} = x_{i}$, so we obtain
\begin{eqnarray*}
{\beta}_{\sigma} & = &
\bigl( l( y_{1} y_{2} \ldots y_{n}), \quad y_{{{\sigma}^{-1}}(1)} y_{{{\sigma}^{-1}}(2)} \ldots y_{{{\sigma}^{-1}}(n)} \bigr) \\
& = & \bigl( ( y_{1} y_{2} \ldots y_{n}) \, \cdot \, l_{n}, \quad y_{{{\sigma}^{-1}}(1)} y_{{{\sigma}^{-1}}(2)} \ldots y_{{{\sigma}^{-1}}(n)} \bigr)
\qquad [\mbox{ by \eqref{defln}}] \\
& = & {\alpha}_{{\sigma}^{-1}} \qquad [\mbox{by definition}].
\end{eqnarray*}

(ii) Let ${\sigma}_{1}, \, {\sigma}_{2}, \ldots , {\sigma}_{k}$ be arbitrary elements of ${\mathfrak S}_{n}$.
By induction on $k$ it is straightforward to check that if
$\displaystyle ({\mathbf 1} -  {\sigma}_{1})({\mathbf 1} -  {\sigma}_{2}) \cdots ({\mathbf 1} -  {\sigma}_{k}) =
\sum_{{\sigma} \in {\mathfrak S}_{n}} {\beta}_{\sigma} {\sigma}$ then
$\displaystyle ({\mathbf 1} - {\sigma}_{k}^{-1}) ({\mathbf 1} - {\sigma}_{k-1}^{-1}) \cdots ({\mathbf 1} - {\sigma}_{1}^{-1}) =
\sum_{{\sigma} \in {\mathfrak S}_{n}} {\beta}_{\sigma} \, {\sigma}^{-1}$.
Our result follows from this property and part (i).
\end{proof}

\bigskip

We carry on with some preliminaries on set partitions and tabloids.
A {\em composition} $\lambda$ of a positive integer $n$ into $r$ positive parts, written $\lambda \models n$,
is an ordered sequence $({\lambda}_{1}, {\lambda}_{2}, \ldots , {\lambda}_{r})$ of positive integers
such that $\sum_{j=1}^{r} {\lambda}_{j} = n$. If in addition ${\lambda}_{1} \geq {\lambda}_{2} \geq \cdots \geq {\lambda}_{r}$
then $\lambda$ is called an ({\em integer) partition} of $n$, written $\lambda  \vdash  n$.
An {\em ordered set partition} (or {\em set composition}) $P$ of $[n]$ into $r$ parts is an ordered $r$-tuple
$P = ( I_{1}, I_{2}, \ldots , I_{r} )$ of $r$ pairwise disjoint non empty subsets $I_{k}$ of $[n]$ (called {\em blocks})
whose union is $[n]$. If we forget the ordering of the blocks and consider just the collection $\pi = \{ I_{1} , I_{2} , \ldots , I_{r} \}$
we obtain an {\em (unordered) set partition} $\pi$ of $[n]$ into $r$ parts.
The {\em type} of $P$ is the composition ${\lambda}(P) = ( |I_{1}|, |I_{2}|, \ldots , |I_{r}|)$ of $n$ and its {\em length} $l(P)$ is the number
of blocks $r$. We let ${\Pi}_{n}^{r}$ (respectively ${\Delta}_{n}^{r}$) denote the set of ordered (respectively unordered) partitions
of $[n]$ with $r$ blocks and ${\Pi}_{n}$ (respectively ${\Delta}_{n}$) be the set of all ordered (respectively unordered) set partitions
of $[n]$. Let also ${\mathcal T}_{n}^{\lambda}$ be the set of ordered partitions of given type
$\lambda = ({\lambda}_{1}, {\lambda}_{2}, \ldots , {\lambda}_{r})$,
where $\lambda \models n$. This is nothing but the set of {\em $\lambda$-tabloids}.
(Note that $\lambda$-tabloids are usually defined for $\lambda  \vdash  n$ as row equivalence classes of {\em Young tableaux of shape $\lambda$}
(see \cite[Def. 2.1.4]{Saga}), but the same can be done in general for $\lambda \models n$ since the definition of a tableau is extended to
compositions in the obvious way (cf. \cite[p. 67]{Saga}).)

It is well known (see \cite[\S 6.1]{Grah + Knut + Pata}) that $|{\Delta}_{n}^{r}|$ is equal
to the {\em Stirling number of the second kind}, denoted by $\{ {n \atop r } \}$, which can be computed by the sum
$\frac{1}{r!} \, \sum_{i=0}^{r} \, (-1)^{i} \binom{r}{i} {(r - i)}^{n}$ and $|{\Delta}_{n}|$ is equal to
the $n$-th {\em Bell number} $B_{n}$ which can recursively be defined as $B_{0} = 1$ and $B_{n} = \sum_{i=0}^{n-1} \binom{n-1}{i} \, B_{i}$.
Then $|{\Pi}_{n}^{r}|$ is equal to $r! \, \{ {n \atop r } \}$, since each element in
${\Delta}_{n}^{r}$ yields $r!$ distinct elements in ${\Pi}_{n}^{r}$ by permuting its blocks.
On the other hand, clearly $|{\mathcal T}_{n}^{\lambda}|$ is equal to the multinomial coefficient
$\displaystyle \binom{n}{\lambda} := \binom{n}{{\lambda}_{1}, \, {\lambda}_{2}, \, \ldots , \, {\lambda}_{r}}
= \frac{n!}{{\lambda}_{1}! {\lambda}_{2}! \cdots {\lambda}_{r}!}$.

Following Sagan in \cite[Def. 5.5.7]{Saga} each $\pi \in {\Delta}_{n}^{r}$ may be written uniquely in the form of a special ordered
partition, called the {\em tabloid form} of $\pi$, which is defined as
\begin{equation}
\mathcal P \: = \: I_{1} / I_{2} / \ldots / I_{r} \, , \label{tablunpart}
\end{equation}
where numbers in each block are written in the natural increasing order; blocks are listed in weakly decreasing order of size
and furthermore blocks of equal size are arranged in increasing order of their minimal elements.
The {\em type} of $\pi$ is then the integer partition
${\lambda}({\pi}) = ( |I_{1}|, |I_{2}|, \ldots , |I_{r}|)$ of $n$.
For example the partition $\pi \: = \: \{ \{1, 2 \}, \, \{4, 7, 8 \}, \, \{3, 5, 6 \} \}$  of $[8]$ is written in tabloid form as
$\mathcal P \: = \: 3, 5, 6 \, / \, 4, 7, 8 \, / \, 1, 2$ and is of type $(3,3,2)$.

Let $P, Q \in {\Pi}_{n}$. We say that $P$ {\em refines} $Q$ and write $P \preceq Q$ if each block of $P$ is a subset of some block of $Q$.
The relation $\preceq$ on ${\Pi}_{n}$ is reflexive and transitive. We have $P \preceq Q$ and $Q \preceq P$ if and only if $Q$ may be obtained
by rearranging the blocks of $P$. In this case we write $P \simeq Q$ and obtain an equivalence relation $\simeq$ in ${\Pi}_{n}$. The set
of its equivalence classes is then clearly identified with the set ${\Delta}_{n}$ of unordered set partitions of $[n]$
which inherits the refinement order $\preceq$ from ${\Pi}_{n}$. The $n$-block partition $1 \, / 2 \, / \, \ldots \, / \, n$ and
the $1$-block partition $1, 2, \ldots , n \, / \,$ appear respectively at the bottom and at the top of the corresponding {\em Hasse diagram}
of the partially ordered set $({\Delta}_{n}, \, \preceq )$.

The symmetric group ${\mathfrak S}_{n}$ acts naturally from the left on the sets
${\Delta}_{n}^{r}$, ${\Pi}_{n}^{r}$ and ${\mathcal T}_{n}^{\lambda}$ defined above,
simply by permuting the entries in the blocks of a set partition or a tabloid.
For a commutative ring $K$ with unity let $D_n^r$, $P_n^r$ and $T_n^{\lambda}$ be the $K$-modules freely generated by the sets
${\Delta}_{n}^{r}$, ${\Pi}_{n}^{r}$ and ${\mathcal T}_{n}^{\lambda}$, respectively.
Extending the permutation action of ${\mathfrak S}_{n}$ linearly to $D_n^r$, $P_n^r$ and $T_n^{\lambda}$ these become
left permutation $K{\mathfrak S}_{n}$-modules.

\medskip

In Problem \ref{suppfreeLieZm} we search for words $w$ with $|w| = n$ and $|alph(w)| = r$ that vanish under $l^{*}$.
By Lemma \ref{l*2alph}\,(ii) if $B_1$ and $B_2$ are sub-alphabets of cardinality $r$ of $A$ then $\ker l_{B_{1}}^{*}$ is identified with
$\ker l_{B_{2}}^{*}$. Therefore, without loss of generality, we may fix a sub-alphabet $B = \{ a_{1}, a_{2}, \ldots a_{r} \}$ of $A$
with the natural total order $a_{1} < a_{2} < \cdots a_{r}$ and consider the set ${\mathcal W}_n^r$ of all words $w$ with $|w| = n$ and $alph(w) = B$.
The mapping $w \mapsto \{ w \}$ from ${\mathcal W}_n^r$ to ${\Pi}_n^r$ that sends a
word $w = x_{1} x_{2} \ldots x_{n} \, \in \, {\mathcal W}_n^r$ to the ordered
partition $\{w\} = (I_{1}(w), I_{2}(w), \ldots , I_{r}(w))$, where for each $k \in [r]$ the set $I_{k}(w)$ consists
of the positions in $[n]$ where the letter $a_k$ occurs in $w$, is clearly a bijection.
Moreover, if $\lambda = ({\lambda}_{1}, {\lambda}_{2}, \ldots , {\lambda}_{r}) \models n$
we can also consider the set ${\mathcal W}_{n}^{\lambda}$ of all words $w$ of multi-degree
$({\lambda}_{1}, {\lambda}_{2}, \ldots , {\lambda}_{r})$ in $B$, \textit{i.e.}, $|w|_{a_{k}} = {\lambda}_{k}$ for each
$k \in [r]$. The ordered partition $\{w\}$ is then a $\lambda$-tabloid, hence the restriction of the map  $w \mapsto \{ w \}$ to
${\mathcal W}_{n}^{\lambda}$ is a bijection between the set of words ${\mathcal W}_{n}^{\lambda}$ and the set
${\mathcal T}_{n}^{\lambda}$ of $\lambda$-tabloids. For example, for $\lambda = (3, 3, 1, 1)$ and $B = \{ a, b, c, d \}$, the word $w = aacbdbba$
is represented by the tabloid $1, 2, 8 \, / \, 4, 6, 7 \, / \, 3 \, / \, 5$.
By changing our initial order on $B$ to another one which makes $\lambda = (|w|_{a_{1}}, |w|_{a_{2}}, \ldots , |w|_{a_{r}})$ an integer
partition of $n$ we may, without loss of generality, assume that $\lambda \vdash n$ and consider only the sets ${\mathcal W}_{n}^{\lambda}$
and ${\mathcal T}_{n}^{\lambda}$ for $\lambda \vdash n$.

\smallskip

Let $W_n^r$  be the $K$-span of ${\mathcal W}_n^r$ in $K \langle B \rangle$
and $W_{n}^{\lambda}$ be the set of all polynomials on $B$ over $K$ of multi-degree
$({\lambda}_{1}, {\lambda}_{2}, \ldots , {\lambda}_{r})$.
The symmetric group ${\mathfrak S}_{n}$ acts on the sets ${\mathcal W}_n^r$ and ${\mathcal W}_{n}^{\lambda}$
from the right by place permutations.
Extending these actions naturally to $W_n^r$ and $W_{n}^{\lambda}$ the latter become right permutation
modules for ${\mathfrak S}_{n}$.
We compare the left action of ${\mathfrak S}_{n}$ on $P_{n}^{r}$ and $T_{n}^{\lambda}$ with its right action on $W_{n}^{r}$ and $W_{n}^{\lambda}$, respectively by the following results.

\begin{lemma}\label{symm{w}}
Let $w$ be a word of length $n$ and $\sigma$ be an arbitrary permutation of the symmetric group
${\mathfrak S}_{n}$ on $n$ letters. Then
\[ {\sigma} \cdot \{ w \} \: = \: \{w \cdot {\sigma}^{-1} \} \, . \]
\end{lemma}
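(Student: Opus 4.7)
The plan is to unpack the definitions of both sides and observe that they describe the same ordered partition of $[n]$. There are really no obstacles here — the lemma is essentially a sanity check that the right place-permutation action on words is intertwined with the left action on tabloids via the map $w \mapsto \{w\}$.

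First I would fix notation: write $w = x_1 x_2 \cdots x_n$ with $x_i \in B$, so that $\{w\} = (I_1(w), \ldots, I_r(w))$ with $I_k(w) = \{\, i \in [n] : x_i = a_k \,\}$. The left action of $\sigma$ on a tabloid is entry-wise on each block, so
\[
\sigma \cdot \{w\} \; = \; \bigl(\sigma(I_1(w)), \, \sigma(I_2(w)), \, \ldots, \, \sigma(I_r(w))\bigr).
\]

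Next I would compute $\{w \cdot \sigma^{-1}\}$ directly from the place-permutation definition. By $(x_1 \cdots x_n)\cdot \tau = x_{\tau(1)} \cdots x_{\tau(n)}$ applied with $\tau = \sigma^{-1}$, the letter at position $i$ of $w \cdot \sigma^{-1}$ is $x_{\sigma^{-1}(i)}$. Hence
\[
I_k(w \cdot \sigma^{-1}) \; = \; \{\, i \in [n] : x_{\sigma^{-1}(i)} = a_k \,\} \; = \; \{\, i \in [n] : \sigma^{-1}(i) \in I_k(w) \,\} \; = \; \sigma(I_k(w)),
\]
the last equality being the standard fact that $i \in \sigma(S)$ iff $\sigma^{-1}(i) \in S$.

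Comparing the two displayed expressions block by block gives $\sigma \cdot \{w\} = \{w \cdot \sigma^{-1}\}$, which is the claim. The only conceptual point worth emphasising in the write-up is why $\sigma^{-1}$ (rather than $\sigma$) appears on the right-hand side: it is precisely the standard conversion between a right action and a left action, and it is forced by the place-permutation convention $(x_1 \cdots x_n)\cdot \sigma = x_{\sigma(1)} \cdots x_{\sigma(n)}$, in which $\sigma$ reindexes the \emph{source} positions rather than the target positions.
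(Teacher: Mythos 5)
Your proof is correct and is essentially identical to the paper's: both reduce the claim to the block-wise identity $I_{k}(w \cdot \sigma^{-1}) = \sigma(I_{k}(w))$ and verify it by unpacking the place-permutation definition, using that $i \in \sigma(S)$ iff $\sigma^{-1}(i) \in S$. No issues.
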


\smallskip

\begin{proof}
Since $\{w\} = (I_{1}(w), I_{2}(w), \ldots , I_{r}(w))$ it suffices to show that $I_{k}(w \cdot {\sigma}^{-1}) = \sigma \cdot I_{k}(w)$,
for each $k \in [r]$.
By definition $I_{k}(w) = \{ i \in [n] \, : \, x_{i} = a_{k} \}$, hence clearly
$\sigma \cdot I_{k}(w) = \{ {\sigma}(i) \in [n] \, : \, x_{i} = a_{k} \} =
\{ j \in [n] \, : \, x_{{\sigma}^{-1}(j)} = a_{k} \} = I_{k}(w \cdot {\sigma}^{-1})$.
\end{proof}

We extend by linearity the bijective mapping $w \mapsto \{ w \}$ from ${\mathcal W}_{n}^{r}$ to
${\Pi}_{n}^{r}$ to a $K$-module isomorphism ${f}$ from $W_{n}^{r}$ to $P_{n}^{r}$
defined as ${f}(\sum_{w} k_{w} w) = \sum_{w} k_{w} \{ w \}$, for a typical element $\sum_{w} k_{w} w \in W_{n}^{r}$.

\begin{theorem}\label{wln*ln{w}}
The right action of the symmetric group ${\mathfrak S}_{n}$ on the polynomials $W_{n}^{r}$
(respectively $W_{n}^{\lambda}$, where $\lambda \vdash n$) is equivalent to the left action of ${\mathfrak S}_{n}$ on the $K$-space
of ordered partitions $P_{n}^{r}$ of $[n]$ into $r$ parts (respectively on $\lambda$-tabloids $T_{n}^{\lambda}$).
Moreover,
\[ {f} (w \cdot l_{n}^{*}) \: = \: l_{n} \cdot \{ w \} \, . \]
\end{theorem}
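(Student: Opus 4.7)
The plan is to reduce the theorem to two ingredients already available in the excerpt: the single-permutation identity ${\sigma} \cdot \{w\} = \{w \cdot {\sigma}^{-1}\}$ of Lemma \ref{symm{w}}, and the coefficient identity ${\beta}_{\sigma} = {\alpha}_{{\sigma}^{-1}}$ of Lemma \ref{lnln*symm}\,(i). These two facts together package the entire statement.

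First I would introduce the $K$-linear anti-involution $*$ on $K{\mathfrak S}_{n}$ defined by $X = \sum_{\sigma} k_{\sigma} {\sigma} \mapsto X^{*} = \sum_{\sigma} k_{\sigma} {\sigma}^{-1}$, which satisfies $(XY)^{*} = Y^{*} X^{*}$. Extending Lemma \ref{symm{w}} $K$-linearly to the whole group ring produces, for every $w \in {\mathcal W}_{n}^{r}$ and every $X \in K{\mathfrak S}_{n}$, the identity
\begin{equation*}
{f}(w \cdot X) \: = \: X^{*} \cdot {f}(w) \, .
\end{equation*}
This is precisely the equivalence of the right $K{\mathfrak S}_{n}$-module $W_{n}^{r}$ with the left $K{\mathfrak S}_{n}$-module $P_{n}^{r}$ transported through the anti-involution $*$, which is the content of the first assertion. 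Since the place permutation action preserves the multi-degree of a word and the left action on $\lambda$-tabloids preserves the block-size type, the same identity restricts without change to an equivalence between $W_{n}^{\lambda}$ and $T_{n}^{\lambda}$ when $\lambda \vdash n$.

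For the displayed identity it suffices to specialise $X = l_{n}^{*}$. Writing $l_{n} = \sum_{\sigma} {\alpha}_{\sigma} {\sigma}$, Lemma \ref{lnln*symm}\,(i) gives $l_{n}^{*} = \sum_{\sigma} {\alpha}_{{\sigma}^{-1}} {\sigma}$, so after the reindexing $\tau = {\sigma}^{-1}$ one finds
\begin{equation*}
(l_{n}^{*})^{*} \: = \: \sum_{\sigma} {\alpha}_{{\sigma}^{-1}} {\sigma}^{-1} \: = \: \sum_{\tau} {\alpha}_{\tau} {\tau} \: = \: l_{n} \, .
\end{equation*}
Plugging this into the displayed identity of the previous paragraph yields $f(w \cdot l_{n}^{*}) = l_{n} \cdot \{w\}$, as required.

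There is no genuine obstacle here: the whole argument is careful bookkeeping about which side the group ring acts on, and the crucial observation is that $\sigma \mapsto \sigma^{-1}$ intertwines the right place-permutation action on words with the left action on ordered set partitions. The only care needed is the consistent reindexing $\tau = {\sigma}^{-1}$ when invoking Lemma \ref{lnln*symm}\,(i), as performed above.
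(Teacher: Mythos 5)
Your proof is correct and follows essentially the same route as the paper: both rest on Lemma \ref{symm{w}} to convert the right place-permutation action into a left action via $\sigma \mapsto \sigma^{-1}$, and on Lemma \ref{lnln*symm}\,(i) to identify the transported $l_{n}^{*}$ with $l_{n}$. Your anti-involution $X \mapsto X^{*}$ merely packages explicitly the computation the paper leaves as ``follows directly'' from those two lemmas.
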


\begin{proof}
If we define ${\sigma} \circ w = w \cdot {\sigma}^{-1}$ for $\sigma \in {\mathfrak S}_{n}$ and
$w \in {\mathcal W}_{n}^{r}$, we have also a left action of
${\mathfrak S}_{n}$ on ${\mathcal W}_{n}^{r}$ and consequently on $W_{n}^{r}$.
In view of Lemma \ref{symm{w}} one can show that
${f} (\sigma \circ P) = \sigma \cdot {f} (P)$, for a polynomial $P \in W_{n}^{\lambda}$.
As a result the corresponding left $K{\mathfrak S}_{n}$-modules
$W_{n}^{r}$ and $P_{n}^{r}$ are isomorphic. The restriction of $f$ to $W_{n}^{\lambda}$ is an isomorphism of $W_{n}^{\lambda}$
onto $T_{n}^{\lambda}$. The second part of the theorem follows directly from Lemma \ref{symm{w}} and Lemma \ref{lnln*symm}\,(i).
\end{proof}

Consider the involution ${\tau}_{n}$ of the symmetric group ${\mathfrak S}_{n}$ written as
\begin{equation}
{\tau}_{n}  \: =  \: \prod_{i=1}^{{\left\lfloor \frac{n}{2} \right\rfloor}} \, (i, \, n-i+1) \, . \label{invo}
\end{equation}

\begin{lemma}\label{tildewtn{w}}
The right action of $l_{n}^{*}$ on $\tilde{w}$ is equivalent to the left action of $l_n$ to the tabloid
${\tau}_{n} \cdot \{ w \}$, \textit{i.e.},
\[ {f}({\tilde{w}} \cdot l_{n}^{*})  \: =  \: l_{n} \cdot ( {\tau}_{n} \cdot \{ w \} ) \, . \]
\end{lemma}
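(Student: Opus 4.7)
The plan is to reduce this identity to the two results already established just above, namely Lemma \ref{symm{w}} and Theorem \ref{wln*ln{w}}. The only new ingredient needed is the observation that the involution ${\tau}_{n}$ defined in \eqref{invo} is precisely the permutation that reverses a word of length $n$ under the right place permutation action: indeed, if $w = x_{1} x_{2} \cdots x_{n}$, then by the definition of the place permutation action
\[ w \cdot {\tau}_{n} \: = \: x_{{\tau}_{n}(1)} x_{{\tau}_{n}(2)} \cdots x_{{\tau}_{n}(n)} \: = \: x_{n} x_{n-1} \cdots x_{1} \: = \: \tilde{w} \, . \]

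First I would record this equality $\tilde{w} = w \cdot {\tau}_{n}$ and then apply Lemma \ref{symm{w}} with ${\sigma} = {\tau}_{n}$. Since ${\tau}_{n}$ is an involution, ${\tau}_{n}^{-1} = {\tau}_{n}$, so Lemma \ref{symm{w}} yields
\[ {\tau}_{n} \cdot \{ w \} \: = \: \{ w \cdot {\tau}_{n}^{-1} \} \: = \: \{ w \cdot {\tau}_{n} \} \: = \: \{ \tilde{w} \} \, . \]

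Finally I would apply Theorem \ref{wln*ln{w}} to the word $\tilde{w}$ itself, obtaining
\[ f( \tilde{w} \cdot l_{n}^{*} ) \: = \: l_{n} \cdot \{ \tilde{w} \} \: = \: l_{n} \cdot ( {\tau}_{n} \cdot \{ w \} ) \, , \]
which is exactly the required identity. There is no real obstacle here; the entire content of the lemma is the compatibility between the combinatorial involution ${\tau}_{n}$ acting on tabloids from the left and the word reversal operation, and this compatibility is immediate from the definition of the place permutation action together with the fact that ${\tau}_{n}$ is self-inverse.
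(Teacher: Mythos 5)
Your proof is correct and follows essentially the same route as the paper: the identity $w \cdot \tau_{n} = \tilde{w}$, Lemma \ref{symm{w}} applied with $\sigma = \tau_{n}$ (using $\tau_{n}^{-1} = \tau_{n}$), and then Theorem \ref{wln*ln{w}} applied to $\tilde{w}$. You have merely spelled out the involution step that the paper leaves implicit.
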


\begin{proof}
Clearly $w \cdot {\tau}_{n} = \tilde{w}$ so if we apply Lemma \ref{symm{w}} for $\sigma = {\tau}_{n}$ we obtain
${\tau}_{n} \cdot \{ w \}  =  \{ \tilde{w} \}$. The result then follows from Theorem \ref{wln*ln{w}}.
\end{proof}

A major implication of the isomorphism established in Theorem \ref{wln*ln{w}} is the equivalence
\begin{equation}
w \cdot l^{*}_{n} \: = \: 0  \quad \Longleftrightarrow  \quad l_{n} \cdot \{w\} \: = \: 0, \label{lnequivln*}
\end{equation}
so Problem \ref{suppfreeLieZm} takes the following form.

\begin{problem}\label{tabloidZm}
Let $m$ be a positive integer with $m \neq 1$. Find all unordered set partitions of $[n]$, written in tabloid form
${\mathcal P} = I_{1} / I_{2} / \ldots / I_{r}$, with the property that $\mathcal P$ satisfies
\begin{equation}
l_n \, \cdot \, {\mathcal P} \: = \: 0 \, , \label{tableq}
\end{equation}
where $l_n$ is the left normed Lie bracketing viewed as an element of the group ring ${\mathbb Z}_{m}{\mathfrak S}_{n}$.
\end{problem}

A few points need to be clarified here.
\begin{enumerate}
\item
Theorem \ref{alphbound} imposes the restriction $r = l({\mathcal P}) \leq {\lceil n/2 \rceil}$ in \eqref{tableq}.
Fix such a length $r$. If a specific ordered partition $P = (I_{1}, I_{2}, \ldots , I_{r})$ satisfies (\ref{tableq}) then by
Lemma \ref{l*2alph}\,(ii) and the equivalence \eqref{lnequivln*} all $r!$ ordered partitions formed by permutations of its blocks will also
be solutions of (\ref{tableq}). Hence it suffices to consider only ordered partitions in tabloid form \eqref{tablunpart}, \textit{i.e.}, unordered partitions, of length $r$ where $2 \leq r \leq {\lceil n/2 \rceil}$ since the $1$-block partition is always a trivial solution
of \eqref{tableq}.
\item
If $B_{1}$  and $B_{2}$ are sub-alphabets of $A$ with $B_{1} \supset  B_{2}$, $\phi$
is a literal surjective morphism from $B_{1}^{*}$ onto $B_{2}^{*}$ and there exists a word $w \in B_{1}^{*}$ such that
$l_n \cdot \{ w \} = 0$  then Lemma \ref{l*2alph}\,(i) and the equivalence \eqref{lnequivln*} imply that also $l_n \cdot \{ {\phi}(w) \} = 0$.
Consequently, if $P, Q \in {\Pi}_{n}$, $P$ is a solution of (\ref{tableq}) and $P \preceq Q$ then
$Q$ will also satisfy (\ref{tableq}).
\item
In view of the two previous remarks to solve Problem \ref{tabloidZm} we must determine the smallest set of unordered partitions $\mathcal P$ in
tabloid form \eqref{tablunpart} with $r$ parts that satisfy (\ref{tableq}) and generate all solutions of (\ref{tableq}) in the sense that
every other solution will be a partition $\mathcal Q$ refined by $\mathcal P$ and moreover $\mathcal P$ can not itself be refined by some other
solution $\mathcal R$ of (\ref{tableq}) with more than $r$ parts. We search for such a set of minimal solutions starting from partitions of
length ${\lceil n/2 \rceil}$ and moving up on in the Hasse diagram of the partially ordered set $({\Delta}_{n}, \, \preceq )$.

For example, when $n = 5$ and $m = 2$ one can show that the solutions with $3$ parts are the tabloids of the form
$1, 3, 5 \, / \, 2 \, / \, 4$ and $1, 5 \, / \, 2, 4 \, / \, 3$
and every solution with $2$ parts is generated from those. On the other hand, for $n = 7$ and $m = 2$ we have
the solution $1, 3, 6 \, / \, 2, 5, 7 \, / \, 4$ with $3$ parts which can not be refined by any solution
with $4$ parts; the latter follows directly from Example \ref{example}.
\end{enumerate}

\smallskip

In characteristic zero Theorem \ref{DuchThib} and the equivalence \eqref{lnequivln*} yield the following result,
which - according to our knowledge - has not been traced in the literature.

\begin{proposition}\label{kerlnZ}
Let $\lambda \vdash n$. The $\lambda$-tabloids that satisfy the equation
\begin{equation*}
l_n \, \cdot \, t \: = \: 0 \, ,
\end{equation*}
where $l_n$ is the multi-linear left normed Lie bracketing of the free Lie ring are either the $1$-block partition
$1 , 2 , \ldots , n \, / \,$ or the tabloids $t$ which, when viewed as partitions of $[n]$, are refined by the tabloid
\begin{equation}
 1, n \, / \, 2, n-1 \, / \, 3, n-2 \, / \ldots / \, k, k+1 \, , \label{evenpal}
\end{equation}
for $n = 2k$.
\end{proposition}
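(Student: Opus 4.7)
The plan is to exploit the equivalence \eqref{lnequivln*}, which identifies the right action of $l_{n}^{*}$ on words with the left action of $l_{n}$ on tabloids, and thereby reduce the statement to the Duchamp--Thibon classification (Theorem \ref{DuchThib}) of words annihilated by $l^{*}$ over $\mathbb Z$. Fix a sub-alphabet $B = \{a_{1}, \ldots , a_{r}\}$ of $A$ with $r = l(\lambda)$ and write a given $\lambda$-tabloid $t$ uniquely as $\{w\}$ for the corresponding word $w \in {\mathcal W}_{n}^{\lambda}$. Then \eqref{lnequivln*} yields $l_{n} \cdot t = 0$ if and only if $w \cdot l_{n}^{*} = 0$, i.e., $l^{*}(w) = 0$; Theorem \ref{DuchThib} then forces $w$ to be either a power $a^{n}$ of a single letter (with $n \geq 2$) or a palindrome of even length.

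For the forward implication I would handle the two cases separately. If $w = a_{1}^{n}$, then all positions of $[n]$ lie in a single block and $\{w\}$ is the $1$-block partition $1, 2, \ldots , n \, / \,$. If instead $w$ is a palindrome of even length $n = 2k$, then for each $i \in [k]$ the letters at positions $i$ and $n-i+1$ coincide, so these two positions lie in the same block of $\{w\}$; equivalently, every block $\{i, n-i+1\}$ of the tabloid \eqref{evenpal} is contained in some block of $t$, which is exactly the assertion that $t$ is refined by \eqref{evenpal} with respect to the partial order $\preceq$.

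For the converse, the $1$-block partition trivially satisfies $l_{n} \cdot t = 0$ since $l^{*}(a^{n}) = 0$. The tabloid \eqref{evenpal} itself equals $\{w_{0}\}$ for the even palindrome $w_{0} = a_{1} a_{2} \cdots a_{k} a_{k} \cdots a_{2} a_{1}$, and $l^{*}(w_{0}) = 0$ by Theorem \ref{DuchThib}, hence $l_{n}$ annihilates \eqref{evenpal}. The monotonicity observation made in Remark~2 following Problem \ref{tabloidZm} (itself a consequence of Lemma \ref{l*2alph}(i) together with \eqref{lnequivln*}) then guarantees that every tabloid $t$ with \eqref{evenpal} $\preceq t$ also satisfies equation \eqref{tableq}, completing the equivalence.

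The argument is essentially a translation across the $K{\mathfrak S}_{n}$-module isomorphism established earlier in the section, so I do not anticipate any substantive obstacle; the only point requiring explicit care is the dictionary between the palindrome condition on $w$ of even length and the refinement condition \eqref{evenpal} $\preceq \{w\}$, which I would spell out as above using that positions $i$ and $n-i+1$ carry the same letter precisely when they share a block of $\{w\}$.
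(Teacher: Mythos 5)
Your proposal is correct and takes essentially the same route as the paper, which presents Proposition \ref{kerlnZ} as an immediate consequence of Theorem \ref{DuchThib} and the equivalence \eqref{lnequivln*}; your write-up merely makes explicit the dictionary between even palindromes and tabloids refined by \eqref{evenpal}, together with the monotonicity of solutions under refinement. No gaps.
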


\smallskip

We also present the equivalent of Conjecture \ref{conjtwinantiZ} stated in $\lambda$-tabloid form.

\begin{conjecture}\label{conjtabloidZ}
Let $\lambda \vdash n$ and $t_1$ and $t_2$ be $\lambda$-tabloids with both $l_{n} \, \cdot \, t_1$ and $l_{n} \, \cdot \, t_2$ different
from zero.
\begin{description}
 \item[\em{(i)}]
$l_{n} \, \cdot \, t_1 \: = \: l_{n} \, \cdot \, t_2$ \, if and only if \, $t_1 = t_2$ or $n$ is odd and \, $t_1 = {{\tau}_{n}} \, \cdot \, t_2$.
 \item[\em{(ii)}]
$l_{n} \, \cdot \, t_1 \: = \: - \,\, l_{n} \, \cdot \, t_2$ \, if and only if \, $n$ is even and \, $t_1 = {{\tau}_{n}} \, \cdot \, t_2$.
\end{description}
\end{conjecture}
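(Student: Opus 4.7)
My plan is to first make explicit that Conjecture \ref{conjtabloidZ} is simply the tabloid translation of Conjecture \ref{conjtwinantiZ}, using the machinery already developed in this section, and then to indicate how I would attack the underlying conjecture via Reduction Theorem \ref{reducetoalphof2} and the Pascal descent polynomials of Section \ref{pascal}.

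The equivalence step is a straight application of the word--tabloid dictionary. Given $\lambda$-tabloids $t_1, t_2$, the bijection ${\mathcal W}_n^{\lambda} \to {\mathcal T}_n^{\lambda}$, $w \mapsto \{w\}$, produces unique words $u, v$ with $\{u\} = t_1$ and $\{v\} = t_2$. By Theorem \ref{wln*ln{w}} one has $f(u \cdot l_n^*) = l_n \cdot t_1$ and $f(v \cdot l_n^*) = l_n \cdot t_2$; since $f$ is a $K$-module isomorphism, the non-vanishing hypotheses transfer through \eqref{lnequivln*} to $l^{*}(u) \neq 0 \neq l^{*}(v)$, and the equations $l_n \cdot t_1 = \pm\, l_n \cdot t_2$ translate to $l^{*}(u) = \pm\, l^{*}(v)$. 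Lemma \ref{tildewtn{w}} identifies $\tau_n \cdot t_2$ with $\{\tilde v\}$, so, using injectivity of the word--tabloid map, the conditions $t_1 = t_2$ and $t_1 = \tau_n \cdot t_2$ correspond precisely to $u = v$ and $u = \tilde v$, respectively. The parity cases match on the nose, so the two conjectures are logically identical.

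To attack the conjecture itself, I would apply Reduction Theorem \ref{reducetoalphof2} to reduce to $|B| = 2$, so that $\lambda = (n-s, s)$ and each $\lambda$-tabloid is determined by the $s$-element subset $I \subseteq [n]$ appearing in its second row. Under the map $\overline{I} \mapsto x_I$ introduced in the introduction, the left action of $l_n$ is encoded by the multilinear polynomial $h_n(I) = (x_1 - x_2)\, p_n(I)$, and Conjecture \ref{conjtabloidZ} becomes the commutative-algebra rigidity statement: for subsets $I, J \subseteq [n]$ of common cardinality with $p_n(I), p_n(J) \neq 0$, the identity $p_n(I) = p_n(J)$ should force $I = J$ or ($n$ odd and $I = \tau_n(J)$), and $p_n(I) = -p_n(J)$ should force $n$ even and $I = \tau_n(J)$. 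The plan is then to use the recursion for $p_n(I)$ developed in Section \ref{pascal}, tracking how the distinguished monomials of $p_n(I)$ depend on the positional structure of $I$, so that any linear identity among the $p_n(I)$'s must come from the $\tau_n$-symmetry already encoded in Lemma \ref{l*reversal}.

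The main obstacle is precisely this last rigidity step: ruling out accidental coincidences $p_n(I) = \pm\, p_n(J)$ with $I \notin \{J, \tau_n(J)\}$ is not a formal consequence of anything proved so far, and the only symmetry of $p_n(I)$ currently available is the $\tau_n$-involution which is exactly what the conjecture asserts. Thus while the equivalence of the two conjectures is almost tautological given the dictionary built in Theorems \ref{wln*ln{w}} and Lemma \ref{tildewtn{w}}, the real combinatorial content lies in establishing this rigidity theorem for Pascal descent polynomials, which I expect to be the serious remaining problem.
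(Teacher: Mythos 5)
This statement is a conjecture that the paper leaves open: the paper presents it only as the tabloid translation of Conjecture \ref{conjtwinantiZ} (via Theorem \ref{wln*ln{w}}, Lemma \ref{tildewtn{w}} and the equivalence \eqref{lnequivln*}) and further reduces it, through Reduction Theorem \ref{reducetoalphof2}, to Conjecture \ref{conjpn} on Pascal descent polynomials. Your proposal carries out exactly this same chain of translations and correctly identifies that the remaining rigidity statement for $p_{n}(I)$ is the genuinely open content, so it matches the paper's treatment.
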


\medskip

We conclude this section with a few remarks on the case $m=2$. It is easy to show by induction that
palindromes of length $n > 1$, where $n$ might be even or odd, lie in the kernel of $l^{*}$.
The latter correspond of course to tabloids of the form
\begin{equation}
t = 1, n \, / \, 2, n-1 \, / \, 3, n-2 \, / \ldots / \, r, r + 2 \, / \, r + 1 \, ,  \label{oddpal}
\end{equation}
where $n = 2r + 1$. By an easy induction on $r$ we also get the hook-shaped solution
\begin{equation}
1, 3, 5, \ldots , 2r + 1 \, / \, 2 \, / \, 4 \, / \, 6 \, / \ldots / \, 2r \, . \label{oddhook}
\end{equation}
We conjecture that (\ref{oddpal}) and (\ref{oddhook}) are the only solutions of (\ref{tableq}) with
$r + 1$ parts when $n=2r + 1$.
For $n=2r$, except from tabloids of the form (\ref{evenpal}) that correspond to palindromes of even length,
there exist other solutions with $r$ parts.
For example for $n = 6$ we have also the solutions
$1, 3, 5 \, / \, 2, 6 \, / \, 4$; $1, 3, 6 \, / \, 2, 5 \, / \, 4$; $1, 4, 6 \, / \, 2, 5 \, / \, 3$ and
$2, 4, 6 \, / \, 1, 5 \, / \, 3$.
Nevertheless, we conjecture that the only solution of
type $(2, 2, \ldots , 2)$ with $r$ parts is of the form (\ref{evenpal}).

\bigskip


\section{Pascal descent polynomials}\label{pascal}

In this section we restrict ourselves to words of length $n$ where only two letters occur and consider the corresponding set
${\Delta}_{n}^{2}$ of all unordered set partitions of $[n]$ with $2$ parts. It is known
\cite[p. 258]{Grah + Knut + Pata} that $|{\Delta}_{n}^{2}| = \{ {n \atop 2 } \} = 2^{n-1}  - 1$.
Such a partition is written in tabloid form $J \, / \, I$, so it is uniquely determined by the subset
$I = \{ i_1 , i_2 , \ldots , i_s \}$ of $[n]$ appearing in its second block and will be denoted accordingly by $\overline{I}$;
when $I = \{ i \}$ for brevity we will write $\overline{i}$.

We define a mapping $\psi$ from the set ${\Pi}_{n}^{2}$ of ordered partitions with $2$ parts to
the polynomial algebra $K[x_1 , x_2 , \ldots , x_n ]$ in $n$ commuting variables $x_1 , x_2 , \ldots , x_n$
that sends $\overline{I}$ - viewed as an ordered partition - to the monomial $x_{I} = x_{i_1} x_{i_2} \cdots x_{i_s}$. In the extreme case
where $I = \emptyset$ we may set $x_{I} = 1$.
The mapping $\psi$ is extended by linearity to a $K{\mathfrak S}_{n}$-module isomorphism, also denoted by $\psi$, from $P_{n}^{2}$ to
$K[x_1 , x_2 , \ldots , x_n ]$, under the natural place permutation actions of the symmetric group ${\mathfrak S}_{n}$ in each case.

Let $h_{n}(I)$ be the image under $\psi$ of the element $l_n \cdot \overline{I}$; for simplicity we write $h_{n}(i)$ instead of $h_{n}(\{i\})$.
Let us clarify this definition a bit more. Suppose that $l_{n} = \sum_{{{\sigma} \in {\mathfrak S}_{n}}} {\alpha}_{\sigma} {\sigma}$.
Then we obtain
\begin{equation}
 h_{n}(I) \: = \: \sum_{{{\sigma} \in {\mathfrak S}_{n}}} {\alpha}_{\sigma} {\psi}({\sigma} \cdot \overline{I})
          \: = \: \sum_{{{\sigma} \in {\mathfrak S}_{n}}} {\alpha}_{\sigma} {\psi}(\overline{{\sigma}(I)})
         \: = \: \sum_{{{\sigma} \in {\mathfrak S}_{n}}} {\alpha}_{\sigma} x_{{\sigma}(I)}. \label{defhn}
\end{equation}
The polynomial $h_{n}(I)$ is homogeneous of total degree $s$ and {\em multi-linear}, \textit{i.e.}, the degree of each of its variables is at most $1$.

Let now $N$ be a positive integer with $N > n$ and set $[n+1,N] = [N] \setminus [n]$. It will be useful for us to extend the definition of
$h_{n}(I)$ to subsets of $[N]$. We do this using the last equality of \eqref{defhn}, \textit{i.e.},
$h_{n}(I) = \sum_{{{\sigma} \in {\mathfrak S}_{n}}} {\alpha}_{\sigma} x_{{\sigma}(I)}$ and by identifying ${\mathfrak S}_{n}$ with the
set of permutations $\sigma \in {\mathfrak S}_{N}$ that leave point-wise invariant the subset $[n+1,N]$ of $[N]$.
We obtain the following technical result.

\begin{lemma}\label{exthn}
Let $n, N$ be positive integers with $n < N$ and $I \subseteq [N]$ such that $J = I \cap [n+1,N] \neq \emptyset$.
Then
\[ h_{n}(I) \: = \: h_{n}(I \setminus J) \, x_{J} \, . \]
\end{lemma}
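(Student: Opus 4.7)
The plan is to unwind the extended definition of $h_n$ directly from the last equality of \eqref{defhn}, namely
\[ h_{n}(I) \: = \: \sum_{{\sigma} \in {\mathfrak S}_{n}} {\alpha}_{\sigma} \, x_{{\sigma}(I)} \, , \]
where each $\sigma \in \mathfrak{S}_n$ is identified with the permutation of $[N]$ that fixes $[n+1,N]$ pointwise. Since $J = I \cap [n+1,N]$, every such $\sigma$ satisfies $\sigma(j) = j$ for all $j \in J$, so $\sigma(J) = J$. On the other hand, $\sigma$ permutes $[n]$, so $\sigma(I \setminus J) \subseteq [n]$. In particular the two sets $\sigma(I \setminus J)$ and $J$ are disjoint and $\sigma(I)$ splits as the disjoint union $\sigma(I) = \sigma(I \setminus J) \sqcup J$.

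Once this splitting is established, the corresponding monomial factorizes as $x_{\sigma(I)} = x_{\sigma(I \setminus J)} \, x_{J}$ because the variables $x_1, \ldots, x_N$ commute. Substituting in the defining sum yields
\[ h_{n}(I) \: = \: \sum_{{\sigma} \in {\mathfrak S}_{n}} {\alpha}_{\sigma} \, x_{{\sigma}(I \setminus J)} \, x_{J}
          \: = \: \Big( \sum_{{\sigma} \in {\mathfrak S}_{n}} {\alpha}_{\sigma} \, x_{{\sigma}(I \setminus J)} \Big) \, x_{J}
          \: = \: h_{n}(I \setminus J) \, x_{J} \, , \]
since $x_J$ is independent of $\sigma$ and can be factored out.

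There is essentially no obstacle here: the lemma is a bookkeeping statement recording that the variables indexed by $J$ are inert under the $\mathfrak{S}_n$-action. The only point that deserves a brief verification is the disjointness claim $\sigma(I \setminus J) \cap J = \emptyset$, which is what makes the monomial $x_{\sigma(I)}$ truly split as a product (and not, say, collapse because some $x_i^2$ would appear). This follows immediately from $\sigma([n]) = [n]$ and $J \subseteq [n+1,N]$, so the whole argument is just two lines once \eqref{defhn} and the convention for extending $\mathfrak{S}_n$ inside $\mathfrak{S}_N$ are in place.
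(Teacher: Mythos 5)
Your proof is correct and follows essentially the same route as the paper: both identify $\mathfrak{S}_n$ inside $\mathfrak{S}_N$ as the permutations fixing $[n+1,N]$ pointwise, split $\sigma(I)$ as the disjoint union of $\sigma(I\setminus J)$ and $J$, and factor $x_J$ out of the defining sum. Your explicit check that $\sigma(I\setminus J)\cap J=\emptyset$ is a small, harmless addition the paper leaves implicit.
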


\smallskip

\begin{proof}
In view of the aforementioned identification we let $\sigma \in {\mathfrak S}_{n}$. Then ${\sigma}(I)$ is the disjoint union of
${\sigma}(I \setminus J)$ and $J$ since ${\sigma}(J) = J$, so that \eqref{defhn} yields
\begin{equation*}
h_{n}(I)  \: = \: \sum_{{\sigma} \in {\mathfrak S}_{n}} {\alpha}_{\sigma} \, x_{{\sigma}(I)}
         \: = \: \sum_{{\sigma} \in {\mathfrak S}_{n}} {\alpha}_{\sigma} \, x_{{\sigma}(I \setminus J)} \, x_{J}
        \:  = \: \big [ \sum_{{\sigma} \in {\mathfrak S}_{n}} {\alpha}_{\sigma} \, x_{{\sigma}(I \setminus J)} \big ] \, x_{J}
          \: = \: h_{n}(I \setminus J) \, x_{J} \, .
\end{equation*}
as required.
\end{proof}

\begin{lemma}\label{defrechn}
Let $I \subseteq [n]$. Then the polynomial $h_{n}(I)$ is recursively defined as
\[ h_{n}(I)  = \begin{cases}
   \displaystyle (-1)^{i-1} {{n  - 1 } \choose {i - 1}} \, \{ x_{1} - x_{2} \},  &\text{if \, $I =\{i\}$;} \\
   h_{n-1}(I) \; - \; h_{n-1}({\zeta}_{n} I),  &\text{otherwise,}
               \end{cases}  \]
where ${\zeta}_{n}$ denotes the descending cycle $(n \, \ldots \, 2 \, 1)$.
\end{lemma}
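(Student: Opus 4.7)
The plan is to derive both branches of the recursion directly from the structure of $l_n$ as an element of $K{\mathfrak S}_{n}$, taking advantage of the multiplicative decomposition~\eqref{multln}, i.e.\ $l_{n} = l_{n-1}({\mathbf 1} - {\zeta}_{n})$, where we read the product right-to-left, and of the explicit formula of Lemma~\ref{l*1b}\,(i) for the base case.

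For the singleton case $I = \{i\}$, I would first observe that, under the bijection ${\mathcal W}_{n}^{\lambda} \to {\mathcal T}_{n}^{\lambda}$ for $\lambda = (n-1,1)$, the tabloid $\overline{i}$ is the image of the word $w_i = a^{i-1}ba^{n-i}$, and hence by Theorem~\ref{wln*ln{w}} we have $l_n \cdot \overline{i} = f(l^{*}(w_i))$. Lemma~\ref{l*1b}\,(i) then immediately gives $l^{*}(w_i) = (-1)^{i-1}\binom{n-1}{i-1}\{ba^{n-1} - aba^{n-2}\}$. Applying $f$ sends $ba^{n-1}$ to the tabloid $\overline{1}$ and $aba^{n-2}$ to the tabloid $\overline{2}$, and applying $\psi$ maps these to $x_1$ and $x_2$ respectively, yielding the claimed closed form $(-1)^{i-1}\binom{n-1}{i-1}(x_1-x_2)$.

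For the general case I would use the factorization $l_n = l_{n-1}({\mathbf 1} - {\zeta}_{n})$, where $l_{n-1}$ is viewed as an element of $K{\mathfrak S}_{n}$ that fixes $n$ point-wise. Acting on $\overline{I}$ gives
\begin{equation*}
l_n \cdot \overline{I} \; = \; l_{n-1} \cdot \bigl[({\mathbf 1} - {\zeta}_{n}) \cdot \overline{I}\bigr] \; = \; l_{n-1} \cdot \overline{I} \; - \; l_{n-1} \cdot \overline{{\zeta}_{n} I},
\end{equation*}
since the left action of a permutation on a $2$-block tabloid $\overline{J}$ just sends it to $\overline{\sigma(J)}$. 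Applying the $K{\mathfrak S}_{n}$-module isomorphism $\psi$ and invoking the extended definition of $h_{n-1}$ on subsets of $[n]$ (so that $\psi(l_{n-1} \cdot \overline{J}) = h_{n-1}(J)$ for any $J \subseteq [n]$) yields $h_n(I) = h_{n-1}(I) - h_{n-1}({\zeta}_{n} I)$, as required.

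The only subtle point is making sure the extended definition of $h_{n-1}(J)$ is used correctly when $n \in J$, i.e.\ when $1 \in I$ so that $n \in {\zeta}_{n}I$, or when $n$ already lies in $I$; this is exactly the situation that Lemma~\ref{exthn} is set up to handle, factoring out $x_n$ as needed. Aside from this book-keeping, the derivation is a one-line consequence of~\eqref{multln}, and no further induction is required beyond the direct appeal to Lemma~\ref{l*1b}\,(i) for the singleton base case.
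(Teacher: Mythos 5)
Your proof is correct and follows essentially the same route as the paper: the singleton case via Theorem \ref{wln*ln{w}} together with Lemma \ref{l*1b}\,(i), and the general case via the factorization $l_{n} = l_{n-1}({\mathbf 1} - {\zeta}_{n})$ from \eqref{multln} with Lemma \ref{exthn} handling the book-keeping when $n \in I$ or $1 \in I$. The only (cosmetic) difference is that the paper phrases the second branch as an induction on $n$, whereas you correctly observe that the identity falls out of the factorization directly with no induction hypothesis actually being used.
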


\smallskip

\begin{proof}
For $I = \{ i \}$ Theorem \ref{wln*ln{w}} and Lemma \ref{l*1b} yield
\begin{eqnarray*}
h_{n}(i) \: = \:
{\psi}(l_{n} \cdot {\overline{i}} \, ) & = & {\psi} ({f} \big( ( a^{i-1}ba^{n-i} ) \cdot l_{n}^{*} \big))
      \: = \: {\psi} \bigl( {f} ( l^{*}( a^{i-1}ba^{n-i} ) ) \bigr) \\
       & = & ({\psi}  {f}) ( (-1)^{i-1} {{n  - 1 } \choose {i - 1}} \{ ba^{n - 1 } - aba^{n - 2} \} ) \\
       & = & (-1)^{i-1} \binom{n - 1 }{i - 1} \{ ({\psi}  {f}) (ba^{n-1}) - ({\psi}  {f}) (aba^{n-2}) \} \\
       & = & (-1)^{i-1} {{n  - 1 } \choose {i - 1}} \{ ({\psi} ({\overline{1}}) - {\psi} ({\overline{2}}) \} =
             (-1)^{i-1} {{n  - 1 } \choose {i - 1}} \{ x_1  - x_2 \} .
\end{eqnarray*}
When $|I| > 1$ we argue by induction on $n$, where $n \geq 2$.
The case $n=2$ follows trivially.
Using the multiplicative formula (\ref{multln}) for $l_n$ the induction step yields
\begin{eqnarray*}
h_{n}(I) \: = \: {\psi}(l_{n} \cdot \overline{I}) & = & {\psi} \big [ l_{n-1} (1 - {\zeta}_{n} ) \cdot \overline{I} \big ] \\
       & = & {\psi} \big [ l_{n-1} \cdot \overline{I} \; - \; l_{n-1} \cdot ( {\zeta}_{n} \overline{I} ) \big ] \\
       & = & h_{n-1}(I) - h_{n-1} \big ( {\zeta}_{n} I \big ) \, .
\end{eqnarray*}
Note that if $n \in I$ or $1 \in I$ we get $h_{n-1}(I) = h_{n-1}(I \setminus \{n\})\,x_{n}$ and
$h_{n-1}({\zeta}_{n} I) = h_{n-1}({\zeta}_{n} I \setminus \{n\})\,x_{n}$, respectively, by Lemma \ref{exthn}.
\end{proof}

\medskip

\begin{proposition}\label{defrecpn}
Let $I$ be a subset of $[n]$ of cardinality $s$. The binomial $x_{1} - x_{2}$ divides $h_{n}(I)$.
The corresponding quotient, which we denote by $p_{n}(I)$, is a multi-linear polynomial of total degree $s - 1$
given by the recursive formula
\begin{equation*}
p_{n}(I)  = \begin{cases}
   \displaystyle (-1)^{i-1} {{n  - 1 } \choose {i - 1}},  &\text{if \, $I =\{i\}$;} \\
   p_{n-1}(I) \; - \; p_{n-1}({\zeta}_{n} I),  &\text{otherwise,} \label{pnrec}
               \end{cases}
\end{equation*}
where whenever $I'$ is a subset of $[n]$ with $n \in I'$ we set $p_{n-1}(I') = p_{n-1}(I' \setminus \{n\}) \, x_{n}$.
\end{proposition}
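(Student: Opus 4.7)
The plan is to prove all three claims---divisibility of $h_{n}(I)$ by $x_{1}-x_{2}$, the multi-linear degree bound, and the recursion for the quotient---simultaneously by induction on $n$, piggybacking on Lemma \ref{defrechn}. The key observation is that dividing a Lemma \ref{defrechn}-style recursion by the fixed factor $x_{1}-x_{2}$ produces a recursion of exactly the shape asserted for $p_{n}(I)$, so once divisibility is established the rest is bookkeeping.

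First I would dispense with the base case $n=2$ by direct computation: for $I=\{1\}$ and $I=\{2\}$ Lemma \ref{defrechn} yields $h_{2}(\{1\})=x_{1}-x_{2}$ and $h_{2}(\{2\})=-(x_{1}-x_{2})$, whereas $h_{2}(\{1,2\}) = x_{1}x_{2}-x_{2}x_{1}=0$, and all are divisible by $x_{1}-x_{2}$ with multi-linear quotient of degree $|I|-1$. For the inductive step at level $n>2$, the singleton case $I=\{i\}$ is immediate from Lemma \ref{defrechn}, giving $p_{n}(\{i\})=(-1)^{i-1}\binom{n-1}{i-1}$, a constant (degree $0 = |I|-1$). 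When $|I|>1$, Lemma \ref{defrechn} gives
\[
h_{n}(I) \; = \; h_{n-1}(I) - h_{n-1}({\zeta}_{n}I).
\]
By induction, both summands are divisible by $x_{1}-x_{2}$, with quotients $p_{n-1}(I)$ and $p_{n-1}({\zeta}_{n}I)$ that are multi-linear of total degree $|I|-1$. Dividing the displayed equation by $x_{1}-x_{2}$ yields the required recursion $p_{n}(I)=p_{n-1}(I)-p_{n-1}({\zeta}_{n}I)$ and shows that the quotient is multi-linear of total degree $|I|-1$, since a difference of multi-linear polynomials of the same degree remains multi-linear of that degree.

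The one subtlety---and the main obstacle---is reconciling the recursion with the convention $p_{n-1}(I')=p_{n-1}(I'\setminus\{n\})\,x_{n}$ when $n\in I'$. This must be handled for both $h_{n-1}(I)$, when $n\in I$, and $h_{n-1}({\zeta}_{n}I)$, when $1\in I$ (so that $n\in{\zeta}_{n}I$). In either situation Lemma \ref{exthn} factors out $x_{n}$ cleanly: for instance $h_{n-1}(I)=h_{n-1}(I\setminus\{n\})\,x_{n}$, and since $x_{1}-x_{2}$ and $x_{n}$ are coprime in the polynomial ring, divisibility of $h_{n-1}(I)$ by $x_{1}-x_{2}$ reduces to divisibility of $h_{n-1}(I\setminus\{n\})$, which is covered by the inductive hypothesis on $[n-1]$. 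The resulting quotient is exactly $p_{n-1}(I\setminus\{n\})\,x_{n}$, matching the stated convention. The analogous argument for ${\zeta}_{n}I$ uses $1\in I\Leftrightarrow n\in{\zeta}_{n}I$ and that ${\zeta}_{n}I\setminus\{n\}\subseteq[n-1]$. With this checked, the recursion and the multi-linearity of degree $|I|-1$ pass cleanly from level $n-1$ to level $n$, closing the induction.
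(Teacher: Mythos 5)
Your proposal is correct and follows essentially the same route as the paper: induction on $n$ using Lemma \ref{defrechn} for the recursion $h_{n}(I)=h_{n-1}(I)-h_{n-1}(\zeta_{n}I)$ and Lemma \ref{exthn} to peel off the factor $x_{n}$ when $n\in I$ or $1\in I$, then dividing by $x_{1}-x_{2}$ to obtain the stated recursion for $p_{n}(I)$. The only cosmetic difference is that you make explicit the coprimality of $x_{1}-x_{2}$ and $x_{n}$, where the paper simply writes the factorization $h_{n-1}(I\setminus J)\,x_{J}$ and invokes the inductive hypothesis on the first factor.
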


\begin{proof}
When $s=1$ Lemma \ref{defrechn} immediately yields $p_{n}(i) = (-1)^{i-1} \binom{n-1}{i-1}$ so $p_{n}(I)$ is a generalization of the signed binomial
coefficient.
For $s  > 1$ we argue by induction on $n$. The induction step from $n-1$ to $n$ goes as follows.
We set $J = I \cap \{n\}$ and $K = {\zeta}_{n} \, I \cap \{n\}$, where clearly
$x_{J} = 1$ and $x_{K} = 1$ when $n \notin I$ and $1 \notin I$, respectively.
By Lemma \ref{defrechn} $h_{n}(I) = h_{n-1}(I) \: - \:  h_{n-1}({\zeta}_{n} I)$, so by Lemma \ref{exthn} we obtain
$h_{n}(I) = h_{n-1}(I \setminus J) \, x_{J} \: \, - \: h_{n-1}({\zeta}_{n} I \setminus K) \, x_{K}$. By our induction
hypothesis $h_{n-1}(I \setminus J) = (x_{1} - x_{2}) \, p_{n-1}(I \setminus J)$ and
$h_{n-1}({\zeta}_{n} I \setminus K) = (x_{1} - x_{2}) \, p_{n-1}({\zeta}_{n} I \setminus K)$.
Our result then follows if we set $p_{n}(I) =  p_{n-1}(I \setminus J) \, x_{J} \: - \: p_{n-1}({\zeta}_{n} I \setminus K) \, x_{K}$.
\end{proof}

We give the name {\em Pascal descent polynomial} to $p_{n}(I)$ since it yields a signed binomial coefficient when $I$ is a singleton set and
it originates from the additive formula \eqref{addln} which relates to the descent sums $D_{[k-1]}, \, k \in [n]$.

\smallskip

By Lemma \ref{tildewtn{w}}, the analogue of Lemma \ref{l*reversal} for Pascal descent polynomials is given by the equation
\begin{equation}
p_{n}({{\tau}_{n}}(I))  \: = \: (-1)^{n+1} \, p_{n}(I),  \label{pnreversal}
\end{equation}
where ${\tau}_{n}$ is the involution of ${\mathfrak S}_{n}$ defined in \eqref{invo}.
In particular, if $n$ is even and ${{\tau}_{n}}(I) = I$ then $p_{n}(I) = 0$.

\bigskip

The following result yields a recursive decomposition of the polynomial $p_{n}(I)$.

\smallskip

\begin{theorem}\label{recformulapn}
Let $I = \{ a_{1}, a_{2}, \ldots , a_{d} \}$ be a proper subset of $[n]$ of cardinality $d \geq 2$. Set $k = a_{1} - 1$ and $l = n - a_{d}$. Then
\begin{equation*}
p_{n}(I)  \quad = \quad (-1)^{k+1} \sum_{i=0}^{l} \, \binom{k+i}{i} \, p_{\scriptstyle n-k-1-i}(D) \, \cdot \, x_{\scriptstyle n-k-i}
          \quad + \quad  \sum_{j=0}^{k} \, (-1)^{j} \binom{l+j}{j} \, p_{\scriptstyle n-l-1-j}(D_{j}) \, \cdot \, x_{\scriptstyle n-l-j},
\end{equation*}
where $D = \{a_{2} - a_{1}, \, a_{3} - a_{1}, \, \ldots , \, a_{d} - a_{1}\}$ and
$D_{j} = \{a_{1} - j, \, a_{2} - j, \, \ldots , \, a_{d-1} - j\}$, for each $j = 0, \ldots , k$.
\end{theorem}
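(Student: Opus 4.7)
The plan is to prove the formula by induction on $n$, using the recursion $p_n(I) = p_{n-1}(I) - p_{n-1}(\zeta_n I)$ from Proposition \ref{defrecpn} together with the convention $p_{n-1}(J) = p_{n-1}(J \setminus \{n\}) \cdot x_n$ when $n \in J$. It is convenient to extend the statement to all subsets $I \subseteq [n]$ with $|I| \geq 2$, including the degenerate case $I = [n]$ (for which both sides vanish); this removes corner cases from the inductive hypothesis. The base case $n = 3$ is verified by direct computation using the singleton formula from Proposition \ref{defrecpn}.

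For the inductive step I would distinguish four cases according to whether $1$ and $n$ belong to $I$. When $1, n \in I$ (so $k = l = 0$), one has $I = \{n\} \cup D_0$ and $\zeta_n I = \{n\} \cup D$, and the recursion collapses at once to $p_n(I) = (p_{n-1}(D_0) - p_{n-1}(D)) \, x_n$, which is the $k = l = 0$ specialization of the claim. When $1, n \notin I$ (so $k, l \geq 1$), both $I$ and $\zeta_n I = I - 1$ sit in $[n-1]$ with parameters $(k, l-1)$ and $(k-1, l)$ respectively; they share the same $D$, and a direct check gives $D_j(I - 1) = D_{j+1}(I)$. Expanding by the inductive hypothesis, subtracting, shifting the summation index by one in one copy of each sum, and invoking Pascal's identity $\binom{m-1}{i} + \binom{m-1}{i-1} = \binom{m}{i}$ separately in the first and second sums yields the claim. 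In the mixed case $1 \in I, \, n \notin I$ (so $k = 0, l \geq 1$), the identity $\zeta_n I = \{n\} \cup D$ supplies the boundary $i = l$ term of the first sum via $p_{n-1}(\zeta_n I) = p_{n-1}(D) \, x_n$, while the inductive hypothesis on $p_{n-1}(I)$ with parameters $(0, l-1)$ supplies the remaining terms after an index shift. The last case, $1 \notin I, \, n \in I$ ($k \geq 1, l = 0$), is most cleanly reduced to the previous one via the reversal identity $p_n(\tau_n I) = (-1)^{n+1} p_n(I)$ of (\ref{pnreversal}), after a bijective check that the right-hand side of the claim transforms under $I \mapsto \tau_n I$ as $(-1)^{n+1}$ times itself with $(k, l)$ swapped and $(D, D_j)$ replaced by their $\tau_n$-analogues; this in turn reduces to the identity $\binom{a}{b} = \binom{a}{a-b}$ on the binomial coefficients together with $p_m(\tau_m S) = (-1)^{m+1} p_m(S)$ applied to the subpolynomials.

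The main obstacle will be the Pascal-rule bookkeeping in the case $1, n \notin I$, where four separate sums must be collapsed into two after a careful reindexing, together with the $\tau_n$-compatibility check needed to close the last case. A cleaner alternative to the symmetry reduction would be to carry out a joint induction on $(n, d)$, in which the auxiliary set $I \setminus \{n\}$ of cardinality $d - 1$ is treated by the theorem at smaller $d$, with the $d = 1$ base supplied directly by the singleton formula of Proposition \ref{defrecpn}.
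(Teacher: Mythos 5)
Your proposal follows essentially the same route as the paper: induction on $n$ via the recursion of Proposition \ref{defrecpn}, split into cases according to whether $1$ and $n$ lie in $I$, with Pascal's rule collapsing the four sums in the main case $1, n \notin I$ (the paper dismisses the three easier cases as trivial or immediate, where you spell them out, including the optional reversal trick for $n \in I$). One small indexing slip: in the case $1 \in I$, $n \notin I$, the contribution $-p_{n-1}(\zeta_n I) = -p_{n-1}(D)\,x_n$ is the $i=0$ term of the first sum, not the $i=l$ term.
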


\smallskip

\begin{proof}
We apply Proposition \ref{defrecpn}. The case where $a_{1} = 1$ and $a_{d} = n$ follows trivially.
The cases $a_{1} = 1, \: a_{d} < n$ and  $a_{1} > 1, \: a_{d} = n$ follow by an immediate induction on $n$.
If $1 < a_{1}$ and $a_{d} < n$ the induction on $n$ is a bit more tedious. For $n = 3$ the result is true; one can check it easily for
$I = \{ 1, 2 \}$ and $I = \{ 2, 3 \}$. Suppose that it holds for $n = m$. We will show that it also holds for $n = m + 1$.
Setting $k = a_{1} - 1$ and $l = m + 1 - a_{d}$, Proposition \ref{defrecpn} and our induction hypothesis yield
\begin{eqnarray*}
p_{m+1}(I) & = &  p_{m}(I) \, - \, p_{m}(\{ a_{1} - 1, a_{2} - 1, \ldots , a_{d} - 1 \}) \\
& &  \!\!\!\!\!\!\!\!\!\!\!\!\!\!\!\!\!\!\!\!\!\! \quad = \:
(-1)^{k+1} \sum_{i=0}^{l-1} \, \binom{k+i}{i} p_{m-k-1-i}(D)  \cdot  x_{m-k-i}
        \: + \: \sum_{j=0}^{k} \, (-1)^{j} \binom{l-1+j}{j} p_{m-l-j}(D_{j}) \cdot x_{m-l-j+1}\\
&  & \!\!\!\!\!\!\!\!\!\!\!\!\!\!\!\!\!\!\!\!\!\!\!\!\!\!\!\!\!\!\!
\, + \, (-1)^{k+1} \sum_{i=0}^{l} \, \binom{k-1+i}{i} p_{m-k-i}(D) \cdot  x_{m-k-i+1}
        \; + \; \sum_{j=0}^{k-1} \, (-1)^{j+1} \binom{l+j}{j} p_{m-l-1-j}(D_{j+1}) \cdot  x_{m-l-j}  \\
& &  \!\!\!\!\!\!\!\!\!\!\!\!\!\!\!\!\!\!\!\!\!\! \quad = \:
(-1)^{k+1} \big\{ \sum_{i=1}^{l} \, \binom{k+i-1}{i-1} p_{m-k-i}(D)  \cdot  x_{m-k-i+1}
\, + \, \sum_{i=0}^{l} \, \binom{k+i-1}{i} p_{m-k-i}(D)  \cdot  x_{m-k-i+1} \big\} \\
&  & \!\!\!\!\!\!\!\!\!\!\!\!\!\!\!\!\!\!\!\!\!\!\!\!\!\!\!\!\!\!\!
        \: + \: \big\{ \sum_{j=0}^{k} \, (-1)^{j} \binom{l-1+j}{j} p_{m-l-j}(D_{j}) \cdot x_{m-l-j+1}
        \; + \; \sum_{j=1}^{k} \, (-1)^{j} \binom{l-1+j}{j-1} p_{m-l-j}(D_{j}) \cdot x_{m-l-j+1} \big\} \\
& &  \!\!\!\!\!\!\!\!\!\!\!\!\!\!\!\!\!\!\!\!\!\! \quad = \:
(-1)^{k+1} \big\{ \sum_{i=1}^{l} \, \big[ \binom{k+i-1}{i-1} + \binom{k+i-1}{i} \big] p_{m-k-i}(D) \cdot x_{m-k-i+1} \, + \, p_{m-k}(D) \cdot  x_{m-k+1} \big\} \\
&  & \!\!\!\!\!\!\!\!\!\!\!\!\!\!\!\!\!\!\!\!\!\!\!\!\!\!\!\!\!\!\!
\: + \: \big\{ \sum_{j=1}^{k} \, (-1)^{j} \big[ \binom{l-1+j}{j} + \binom{l-1+j}{j-1} \big] p_{m-l-j}(D_{j}) \cdot x_{m-l-j+1}
\, + \, p_{m-l}(D_{0}) \cdot  x_{m-l+1} \big\} \\
& &  \!\!\!\!\!\!\!\!\!\!\!\!\!\!\!\!\!\!\!\!\!\! \quad = \:
(-1)^{k+1} \big\{ \sum_{i=1}^{l} \, \binom{k+i}{i} p_{m-k-i}(D) \cdot x_{m-k-i+1}\, + \, \binom{k + 0}{0} p_{m-k}(D) \cdot  x_{m-k+1} \big\} \\
&  & \!\!\!\!\!\!\!\!\!\!\!\!\!\!\!\!\!\!\!\!\!\!\!\!\!\!\!\!\!\!\!
\: + \: \big\{ \sum_{j=1}^{k} \, (-1)^{j} \binom{l+j}{j} p_{m-l-j}(D_{j}) \cdot x_{m-l-j+1}
\, + \, (-1)^{0}\binom{l+0}{0} p_{m-l}(D_{0}) \cdot  x_{m-l+1} \big\} \\
& &  \!\!\!\!\!\!\!\!\!\!\!\!\!\!\!\!\!\!\!\!\!\! \quad = \:
(-1)^{k+1} \sum_{i=0}^{l} \, \binom{k+i}{i} p_{m-k-i}(D) \cdot x_{m-k-i+1} \: + \:
\sum_{j=0}^{k} \, (-1)^{j} \binom{l+j}{j} p_{m-l-j}(D_{j}) \cdot x_{m-l-j+1},
\end{eqnarray*}
as required.
\end{proof}

\smallskip

By way of example let us calculate $p_{6}(I)$ for the subset $I = \{2,3,5\}$ of $[6]$. We write $p_{6}(2,3,5)$
for brevity and by Proposition 5.4 we obtain
\begin{equation*}
p_{6}(2,3,5) \: = \: {1 + 0 \choose 0} \, p_{4}(1,3) \cdot x_{5} \, + \, {1 + 1 \choose 1} \, p_{3}(1,3) \cdot x_{4} \, + \,
                   {1 + 0 \choose 0} \, p_{4}(2,3) \cdot x_{5} \, - \, {1 + 1 \choose 1} \, p_{3}(1,2) \cdot x_{4} \, .
\end{equation*}

Clearly $p_{4}(2,3) = 0$ since the subset $\{2,3\}$ of $[4]$ corresponds to the even palindrome $ab^{2}a$.
For $p_{3}(1,2)$ we obtain $p_{3}(1,2) = p_{2}(1,2) - p_{2}(1) \cdot x_3$. Again $\{1,2\} = [2]$ corresponds to
the word $b^{2}$ so we have $p_{2}(1,2) = 0$, hence $p_{3}(1,2) = - (-1)^{1-1}{2-1 \choose 1-1} \, x_{3} = - x_{3}$.
For the remaining terms we have $p_{3}(1,3) = p_{2}(1) \cdot x_{3} - p_{2}(2) \cdot x_{3} =
(-1)^{1-1}{2-1 \choose 1-1} \, x_{3} - (-1)^{2-1}{2-1 \choose 2-1} \, x_{3} = 2 x_{3}$ and consequently
$p_{4}(1,3) =  p_{3}(1,3) - p_{3}(2) \cdot x_{4} = 2 x_{3} - (-1)^{2-1}{3-1 \choose 2-1} \, x_{4} = 2 x_{3} + 2 x_{4}$.
Summing up we finally obtain
\[ p_{6}(2,3,5) \: = \: 1 \cdot ( 2 x_{3} + 2 x_{4} ) \cdot x_{5} \, +  \, 2 \cdot ( 2 x_{3} ) \cdot x_{4}  \, - \,
2 \cdot ( - x_{3} ) \cdot x_{4} \: = \: 6 x_3 x_4 \, + \, 2 x_3 x_5 \, + \, 2 x_4 x_5 . \]
Observe that the greatest common divisor of the coefficients of $p_{6}(2,3,5)$ is equal to $2$,
hence the word $ab^{2}aba$ of length $6$, that corresponds to $I$, has $c(w) = 2$
and therefore does not lie in the support of the free Lie algebra over a field of characteristic $2$.

\bigskip

By Theorem \ref{suppm|c(w)}\,(i) and the equivalence \eqref{lnequivln*} established by Theorem \ref{wln*ln{w}},
if we consider words on a two-lettered alphabet Problem \ref{suppfreeLieZm} will be equivalent to the following.

\begin{problem}\label{pnmodm}
Let $m$ be a non-negative integer with $m \neq 1$. Find all set partitions $J \, / \, I$ of $[n]$
with the property that
\begin{equation*}
p_{n}(I) \: \equiv  \: 0 \: (\bmod \, m ) \, .
\end{equation*}
\end{problem}

For $m = 0$ Proposition \ref{kerlnZ} implies that the only solutions to Problem \ref{pnmodm} are - except from the trivial
solution where $I = \emptyset$, for each $n$ - all the subsets $I$ of $[n]$ fixed
by the involution ${\tau}_{n}$ defined by (\ref{invo}), when $n$ is even.

\smallskip

For $m > 1$ our next result provides a necessary condition - stated explicitly on $n, m$ and $I$ - for
$p_{n}(I) \, \equiv  \, 0 \, (\bmod m )$ to hold.

\smallskip

\begin{proposition}\label{Nn}
Let $m$ and $n$ be a positive integers with $m > 1$, $I$ be a subset of $[n]$ with and
$N_{n}(I)$ be the integer defined as
\[ N_{n}(I) \: = \: \sum_{i \in I} (-1)^{i-1} {{n-1} \choose {i-1}}. \]
\begin{description}
\item[\em{(i)}]
If $p_{n}(I) \: \equiv  \: 0 \: (\bmod m )$ then $m \mid N_{n}(I)$.
\item[\em{(ii)}]
In particular, if $m = p$, a prime number, $n = p^{e}$ and $p \nmid |I|$ then $p_{n}(I) \: \not\equiv  \: 0 \: (\bmod m )$.
\end{description}
\end{proposition}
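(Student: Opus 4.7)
The plan is to reduce both parts to the single identity
\begin{equation*}
p_{n}(I)\bigl|_{x_{1} = x_{2} = \cdots = x_{n} = 1} \: = \: N_{n}(I).
\end{equation*}
Granting this, (i) is immediate, since a polynomial all of whose coefficients lie in $m\,\mathbb{Z}$ takes values in $m\,\mathbb{Z}$ under every integer specialisation.

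To establish the displayed identity I would induct on $n$ using the recursion of Proposition \ref{defrecpn}. The base case $|I| = 1$ is tautological, since $p_{n}(\{i\}) = (-1)^{i-1}\binom{n-1}{i-1} = N_{n}(\{i\})$. For the inductive step, two observations collapse the bookkeeping: the extension rule $p_{n-1}(I') = p_{n-1}(I' \setminus \{n\})\,x_{n}$ becomes invisible once $x_{n} = 1$, and $N_{n-1}(J) = N_{n-1}(J \setminus \{n\})$ whenever $n \in J$, because the term $(-1)^{n-1}\binom{n-2}{n-1}$ vanishes. Hence in each of the four subcases (depending on whether $1$ and/or $n$ lies in $I$) the substitution $x_{j} = 1$ reduces the recursion $p_{n}(I) = p_{n-1}(I) - p_{n-1}({\zeta}_{n}I)$ to
\[
p_{n}(I)\bigl|_{x_{j}=1} \: = \: N_{n-1}(I) - N_{n-1}({\zeta}_{n}I),
\]
and a single application of Pascal's rule $\binom{n-1}{i-1} = \binom{n-2}{i-1} + \binom{n-2}{i-2}$, together with the shift $i \mapsto i - 1$ induced by ${\zeta}_{n}$, confirms that the right-hand side equals $N_{n}(I)$.

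Part (ii) follows by contraposition from (i): it suffices to prove $p \nmid N_{n}(I)$ when $n = p^{e}$ and $p \nmid |I|$. Writing $i - 1 = \sum_{q} d_{q}\,p^{q}$ in base $p$ and exploiting that every base-$p$ digit of $n - 1 = p^{e} - 1$ equals $p - 1$, Lucas' theorem gives $\binom{n-1}{i-1} \equiv \prod_{q}\binom{p-1}{d_{q}} \pmod{p}$, and the standard congruence $\binom{p-1}{j} \equiv (-1)^{j} \pmod{p}$ then yields $\binom{n-1}{i-1} \equiv (-1)^{s_{p}(i-1)} \pmod{p}$, where $s_{p}$ denotes the base-$p$ digit sum. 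Hence the $i$-th summand of $N_{n}(I)$ is congruent to $(-1)^{(i-1) + s_{p}(i-1)} \pmod{p}$. For $p$ odd, every $p^{q}$ is odd, so $i - 1 \equiv s_{p}(i-1) \pmod{2}$ and the exponent is even; for $p = 2$ the sign is automatically $+1$ modulo $2$. In either case $N_{n}(I) \equiv |I| \not\equiv 0 \pmod{p}$, as required. The one genuine subtlety is identifying the correct specialisation in (i); after that, the induction and the Lucas-based computation for (ii) are both short.
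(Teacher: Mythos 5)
Your proof is correct, but it reaches the key identity of part (i) by a genuinely different route from the paper. Both arguments rest on the same specialisation principle --- that the coefficient sum of $p_{n}(I)$ equals $N_{n}(I)$ --- but the paper derives it from the \emph{additive} descent formula $l_{n} = \sum_{k=1}^{n}(-1)^{k-1}D_{[k-1]}$: it substitutes $x_{1}=1$, $x_{2}=\cdots=x_{n}=t$ into $h_{n}(I)$, counts the $\binom{n-1}{k-1}$ permutations with descent set $[k-1]$ according to whether $k \in I$, and reads off $h_{n}(I)(1,t,\ldots,t) = N_{n}(I)\,(t^{s-1}-t^{s})$, hence $p_{n}(I)(1,t,\ldots,t) = N_{n}(I)\,t^{s-1}$. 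You instead run an induction on $n$ through the recursion of Proposition \ref{defrecpn} (which descends from the \emph{multiplicative} formula for $l_{n}$) and close it with Pascal's rule, using that ${\zeta}_{n}$ shifts indices down by one and that the boundary binomials $\binom{n-2}{n-1}$ and $\binom{n-2}{-1}$ vanish; I checked that $N_{n-1}(I) - N_{n-1}({\zeta}_{n}I) = N_{n}(I)$ does come out correctly in all four subcases. Your route is self-contained at the level of the $p_{n}(I)$ recursion and never mentions descent classes; the paper's computation yields slightly more, namely the graded form $p_{n}(I)(1,t,\ldots,t)=N_{n}(I)\,t^{s-1}$ of \eqref{pnNn}, which is reused later in Corollary \ref{suffsupp}. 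For part (ii) the two arguments are essentially identical: both reduce via Lucas to $\binom{p-1}{j}\equiv(-1)^{j}\pmod{p}$, and both need the observation that $k\equiv s_{p}(k)\pmod{2}$ for odd $p$ (which you make explicit and the paper leaves implicit). The only things worth adding are a word on the base case $n=2$ of your induction and the remark that $p_{n}(I)\equiv 0 \pmod m$ means all coefficients are divisible by $m$, so the all-ones evaluation is too --- both one-line checks.
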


\begin{proof}
(i) We need to relate the polynomial $h_{n}(I) \in {\mathbb Z}[x_{1}, x_{2}, \ldots , x_{n}]$ with the integer
$N_{n}(I)$. Let $|I| = s$. We claim that if we set $x_{1} = 1$ and $x_{2} = x_{3} = \cdots = x_{n} = t$ in $h_{n}(I)$ we obtain
the polynomial specialization
\begin{equation}
h_{n}(I)(1, t, t, \ldots , t) \: = \: N_{n}(I)\,t^{s-1} - N_{n}(I)\,t^{s}. \label{hnNn}
\end{equation}
If \eqref{hnNn} is true then it is evident that $m \mid N_{n}(I)$. Since
$h_{n}(I) = (x_{1} - x_{2}) \, p_{n}(I)$ by Proposition 5.3, equation \eqref{hnNn} yields
\begin{equation}
p_{n}(I)(1, t, t, \ldots , t) \: = \: N_{n}(I)\,t^{s-1}. \label{pnNn}
\end{equation}
Thus $N_{n}(I)$ may be given a combinatorial interpretation as the sum of the coefficients appearing in all monomials of $p_{n}(I)$.

To prove our claim we use the additive formula \eqref{addln} for $l_n$.
Since $h_{n}(I) = \psi ( l_n \cdot \overline{I} )$ we obtain
\begin{equation}
 h_{n}(I) \: = \: \sum_{k=1}^{n} \, (-1)^{k-1} \psi (D_{[k-1]} \cdot \overline{I} \, ).  \label{sumpsidesc}
\end{equation}
A typical element of $D_{[k-1]}$ is a permutation ${\pi}$ which, when viewed as word in $n$ distinct letters
from $[n]$, is written as
\[ {\pi} \: = \: j_{1} \, j_{2} \, \ldots \, j_{k-1} \, \fbox{$j_{k}$} \, j_{k+1} \, \ldots \, j_{n-1} \, j_{n} \, ,\]
where $j_{1} > j_{2} > \cdots > j_{k-1} > j_{k} = 1 < j_{k+1} < \cdots < j_{n-1} < j_{n}$. It is then clear that
$\displaystyle |D_{[k-1]}| = {{n-1} \choose {k-1}}$ and
$\displaystyle {\psi}({\pi} \cdot \overline{I}) = x_{j_{i_{1}}} x_{j_{i_{2}}} \cdots x_{j_{i_{s}}}$,
when $I = \{i_{1}, i_{2}, \ldots , i_{s} \}$. Setting $x_{1} = 1$ and $x_{2} = x_{3} = \cdots x_{n} = t$ we obtain
\begin{equation}
 \psi ( D_{[k-1]} \cdot \overline{I} )(1, t, t, \ldots , t) = \begin{cases}
   \displaystyle {{n  - 1 } \choose {k - 1}} \, t^{s-1},  &\text{if \, $k \in I$;} \\
   \\
   \displaystyle {{n  - 1 } \choose {k - 1}} \, t^{s},   &\text{otherwise.}
               \end{cases}    \label{psidesc}
\end{equation}
Summing up all these elements by \eqref{sumpsidesc} we obtain
\[ h_{n}(I)(1, t, t, \ldots , t) = \sum_{i \in I} (-1)^{i-1} {{n-1} \choose {i-1}}t^{s-1} \, + \,
\sum_{i \notin I} (-1)^{i-1} {{n-1} \choose {i-1}}t^{s} \]
and our claim follows since
$\displaystyle \sum_{i \in I} (-1)^{i-1} {{n-1} \choose {i-1}} \, + \, \sum_{i \notin I} (-1)^{i-1} {{n-1} \choose {i-1}} =
\sum_{i = 1}^{n} (-1)^{i-1} {{n-1} \choose {i-1}} = 0$, by the binomial theorem.

(ii) We claim that $N_{n}(I) = s$. If this is true the result will follow immediately by part (i) and the assumption
$p \nmid s$. To prove the claim it suffices to show that for each $k \in \{ 0, \ldots , p^{e} - 1 \}$
\begin{equation}
(-1)^{k}{ {p^{e} - 1} \choose k} \equiv 1  \, (\bmod p ). \label{p^e}
\end{equation}
We apply Lucas correspondence theorem and write the numbers $p^{e} - 1$ and $k$ in base $p$ respectively as 
$p^{e} - 1 =  \sum_{t=0}^{e-1} (p-1) \, p^{t}$ and $k =  \sum_{t=0}^{e-1} k_{t} \, p^{t}$, where $k_{t} \in \{ 0, 1, \ldots , p-1 \}$,
so we need to show that 
$\displaystyle (-1)^{l}{ {p - 1} \choose l} \equiv 1  \, (\bmod p )$, for each $l \in \{ 0, \ldots , p - 1 \}$.
The latter follows from the fact that $\displaystyle p  \mid \! {p \choose l} = {{p - 1} \choose l} + {{p - 1} \choose {l - 1}}$, which yields
$\displaystyle {{p - 1} \choose l} \equiv - {{p - 1} \choose {l - 1}} \, (\bmod p )$, and the rest is done by a straightforward induction on $l$.
\end{proof}

\medskip

\begin{corollary}\label{suffsupp}
Let $u,v$ and $w$ be words of length $n$ with $alph(w) = \{a, b\}$; $I, J$ and $K$ be the subsets of $[n]$
consisting of the positions that $b$ occurs in $u,v$ and $w$, respectively and $N_{n}(I)$, $N_{n}(J)$ and $N_{n}(K)$ be defined as in
Proposition \ref{Nn}. If $m$ is a non-negative integer with $m \neq 1$ then
\begin{description}
\item[\em{(i)}]
If $m \nmid N_{n}(K)$ the word $w$ lies in the support of the free Lie algebra ${\mathcal L}_{{\mathbb Z}_{m}}(A)$.
In particular, if $m = p$, a prime number and $n = p^{e}$ every word $w$ with $|K| = s$ and $p \nmid s$ lies
in the support of ${\mathcal L}_{{\mathbb Z}_{m}}(A)$.
\item[\em{(ii)}]
If $u, v$ is a twin pair of words with respect to ${\mathcal L}_{{\mathbb Z}_{m}}(A)$ then
$N_{n}(I) \equiv N_{n}(J) \, (\bmod \, m)$.
\item[\em{(iii)}]
If $u, v$ is an anti-twin pair of words with respect to ${\mathcal L}_{{\mathbb Z}_{m}}(A)$ then
$N_{n}(I) \equiv - N_{n}(J) \, (\bmod \, m)$.
\end{description}
\end{corollary}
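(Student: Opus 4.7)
The strategy for all three parts is the same: chain together the translations already built in the paper, namely support/twin/anti-twin $\leftrightarrow l^*(w)\bmod m$ (Theorem~\ref{suppm|c(w)}), then $l^*(w)\leftrightarrow h_n(K)\leftrightarrow p_n(K)$ via the $K\mathfrak{S}_n$-isomorphism $\psi\circ f$ of Theorem~\ref{wln*ln{w}} together with Proposition~\ref{defrecpn}, and finally $p_n(K)\mapsto N_n(K)$ via the specialization identity $p_n(I)(1,t,\dots,t)=N_n(I)\,t^{|I|-1}$ established inside the proof of Proposition~\ref{Nn}.

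For (i), Theorem~\ref{suppm|c(w)}(i) says that $w$ fails to lie in the support of $\mathcal{L}_{\mathbb{Z}_m}(A)$ if and only if $l^*(w)\in(m)\,\mathbb{Z}\langle A\rangle$. Under $\psi\circ f$ this is equivalent to $h_n(K)\equiv 0\pmod m$, and since $x_1-x_2$ is monic in $x_1$ (hence not a zero divisor in $\mathbb{Z}/m[x_1,\dots,x_n]$), equivalently $p_n(K)\equiv 0\pmod m$. Proposition~\ref{Nn}(i) then forces $m\mid N_n(K)$, so contrapositing proves the first claim. The ``in particular'' assertion is then immediate: Proposition~\ref{Nn}(ii) directly gives $p_n(K)\not\equiv 0\pmod p$ under the hypotheses $m=p$, $n=p^e$, $p\nmid |K|$, placing $w$ in the support.

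For (ii), Theorem~\ref{suppm|c(w)}(ii) reduces the twin condition to $l^*(u)-l^*(v)\in(m)\,\mathbb{Z}\langle A\rangle$; transferring via $\psi\circ f$ gives $h_n(I)\equiv h_n(J)\pmod m$ in $\mathbb{Z}[x_1,\dots,x_n]$, hence $p_n(I)\equiv p_n(J)\pmod m$. Setting $x_1=1$ and $x_2=\cdots=x_n=t$, the specialization identity from the proof of Proposition~\ref{Nn} turns this into the polynomial congruence $N_n(I)\,t^{|I|-1}\equiv N_n(J)\,t^{|J|-1}\pmod m$ in~$t$. When $|I|=|J|$ one simply reads off $N_n(I)\equiv N_n(J)\pmod m$; when $|I|\neq|J|$ the two monomials in $t$ are distinct, so both coefficients must vanish $\bmod\,m$ and the same conclusion holds. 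Part (iii) is identical with the anti-twin condition $l^*(u)+l^*(v)\in(m)\,\mathbb{Z}\langle A\rangle$, producing $N_n(I)\equiv -N_n(J)\pmod m$.

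The only mild subtlety is handling $|I|\neq|J|$ in parts (ii) and (iii); everything else is a straightforward pullback through machinery already assembled, so I do not anticipate a genuine obstacle.
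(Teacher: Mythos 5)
Your proposal is correct and follows essentially the same route as the paper: part (i) is the contrapositive of Proposition \ref{Nn}, and parts (ii)--(iii) pull the twin/anti-twin conditions back through Theorem \ref{suppm|c(w)}\,(ii) and the isomorphism of Theorem \ref{wln*ln{w}} to $p_{n}(I)\mp p_{n}(J)\equiv 0 \pmod m$, then specialize via \eqref{pnNn}. Your extra care about cancelling $x_1-x_2$ modulo $m$ and about the case $|I|\neq|J|$ only fills in details the paper leaves implicit.
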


\begin{proof}
Part (i) is a direct consequence of Proposition \ref{Nn}.
For parts (ii) and (iii) we use Theorem \ref{suppm|c(w)}\,(ii). For a twin pair we obtain
$l^{*}(u) \, - \, l^{*}(v) \in (m)\,{\mathbb Z} \langle A \rangle$ which yields
$p_{n}(I) \, - \, p_{n}(J) \equiv 0 \, (\bmod \, m)$ by Theorem \ref{wln*ln{w}} and our result follows by \eqref{pnNn}.
For an anti-twin pair we argue similarly for the polynomial $l^{*}(u) \, + \, l^{*}(v)$ and its commutative analogue
$p_{n}(I) \, + \, p_{n}(J)$.
\end{proof}

\smallskip

{\em Remarks.}
The invariant $N_{n}(I)$ is a partial sum of signed binomial coefficients from the $n$-th row
of the Pascal triangle (starting to count from $n = 0$). Considering this row $\bmod \, m$ Corollary \ref{suffsupp}\,(i) poses
the requirement that the signed sum of the entries in the positions appearing in $I$ has to be different from zero $\bmod \, m$,
for the word $w$ corresponding to $I$ to lie in the support of ${\mathcal L}_{{\mathbb Z}_{m}}(A)$.
In the binary case this simply means that the digit $1$ is allowed to appear an odd number of times in these positions. Note that the number
of appearances of $1$'s in each row of the Pascal triangle $\bmod \, 2$ is always a power of $2$; it is in fact equal to $2^{b(n)}$, where $b(n)$ is the number of occurrences of the digit $1$ in the binary representation of $n$ (\textit{e.g.}, see \cite{Fine} or \cite{Wolf}).

The condition of Corollary \ref{suffsupp}\,(i) is sufficient but not necessary since there exist many words $w$ with
$m \mid N_{n}(I)$ that also lie in the support of the free Lie algebra ${\mathcal L}_{{\mathbb Z}_{m}}(A)$.
For example, for $m = 2$ and $w = a^{2}b^{2}a$ we get $I = I(w) = \{2,3\}$ and $N_{5}(I) = 2$, but $w$ lies
in the support of ${\mathcal L}_{{\mathbb Z}_{2}}(A)$ since $p_{5}(I) = x_{3} + x_{4}$ and $c(w) = 1$.
On the other hand, for $|I| = 1$ it is a necessary and sufficient condition identified with our theoretical characterization of the
support of ${\mathcal L}_{{\mathbb Z}_{m}}(A)$ and is checked with Kummer's Lemma.
Note also that it is even possible to have $N_{n}(I) = 0$ with $p_{n}(I) \neq 0$, \textit{e.g.}, 
for $w = a^{2}b^{2}a^{2}ba^{2}$ we get $I = \{ 3, 4, 7 \}$ and $N_{9}(I) = 0$. 
Similar examples which demonstrate that the converse of Corollary \ref{suffsupp}\,(ii) and (iii) does not hold can be found, \textit{e.g.},
consider the words $u = ab^{3}a^{2}ba^{2}$ and $v = a^{2}b^{2}a^{2}b^{2}a$ of length $9$ with corresponding $I = I(u) = \{ 2, 3, 4, 7 \}$ and
$J = J(v) = \{ 3, 4, 7, 8 \}$. Then clearly $N_{9}(I) = N_{9}(J) = -8$ but $p_{9}(I) \neq p_{9}(J)$, as one can check by
Theorem \ref{recformulapn}, so that $l^{*}(u) \neq l^{*}(v)$ and hence $u, v$ is not a pair of twin words.

\smallskip

Finally, in view of Reduction Theorem \ref{reducetoalphof2}, Conjecture \ref{conjtabloidZ} for tabloids boils down to the following one
for Pascal descent polynomials.

\begin{conjecture}\label{conjpn}
Let $I, \, J$ be subsets of $[n]$ of cardinality $s \leq {\lfloor n/2 \rfloor}$ with $p_{n}(I) \neq 0$
and $p_{n}(J) \neq 0$. Then
\begin{description}
 \item[\em{(i)}]
$p_{n}(I) = p_{n}(J)$ if and only if $I = J$ or $n$ is odd and $I = {{\tau}_{n}}(J)$.
 \item[\em{(ii)}]
$p_{n}(I) = - p_{n}(J)$ if and only if $n$ is even and $I = {{\tau}_{n}}(J)$.
\end{description}
\end{conjecture}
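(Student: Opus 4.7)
The \emph{if} direction of both parts is immediate from \eqref{pnreversal}: the identity $p_n(\tau_n(I)) = (-1)^{n+1} p_n(I)$ gives $p_n(I) = p_n(J)$ when $n$ is odd and $I = \tau_n(J)$, and $p_n(I) = -p_n(J)$ when $n$ is even and $I = \tau_n(J)$; the case $I = J$ in (i) is trivial. The substance is therefore to show that $p_n(I)$ determines the unordered pair $\{I, \tau_n(I)\}$ whenever $p_n(I) \neq 0$.

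My proposed approach is strong induction on $n$, with base cases for small $n$ verified by direct computation from Proposition \ref{defrecpn}. For the inductive step, given $I = \{a_1 < \cdots < a_s\}$ with $k = a_1 - 1$ and $l = n - a_s$, the strategy is first to recover the unordered pair $\{k, l\}$ from $p_n(I)$ and then to reduce to a smaller subset. The natural invariant to examine is the largest index $j_{\max}$ for which $x_j$ appears with nonzero coefficient in $p_n(I)$: by Theorem \ref{recformulapn} the explicit variables $x_{n-k-i}$ and $x_{n-l-j}$ have maximum index $n - \min(k,l)$, while every subpolynomial $p_{n-k-1-i}(D)$ and $p_{n-l-1-j}(D_j)$ contributes only variables of index strictly less than $n - \min(k,l)$. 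Assuming no catastrophic cancellation at this extremal variable, one recovers $\min(k,l)$, and (since $\tau_n$ interchanges $k$ and $l$) this is the best one can hope for. With $\min(k,l)$ in hand, the isolated leading term ($i = 0$ in the first sum if $k \leq l$, or $j = 0$ in the second sum if $l \leq k$) exhibits the coefficient of $x_{n-\min(k,l)}$ as $\pm p_{n - \min(k,l) - 1}(D)$ with $D = \{a_2 - a_1, \ldots, a_s - a_1\}$; applying the inductive hypothesis to $D$ as a subset of $[n - \min(k,l) - 1]$ recovers $D$ up to its own $\tau$-involution, and hence $I$ up to $\tau_n$. When $k = l$, both leading terms collide at $x_{n-k}$, contributing $(-1)^{k+1} p_{n-k-1}(D) + p_{n-k-1}(D_0)$ with $D_0 = \{a_1, \ldots, a_{s-1}\}$, and one must argue separately — using the symmetric role played by $I$ and $\tau_n(I)$ in this case — that the two contributions can be disentangled.

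The main obstacle is proving that the coefficient of $x_{n - \min(k,l)}$ in $p_n(I)$ is indeed nonzero, which is needed to justify the identification of $\min(k,l)$ from $p_n(I)$. An auxiliary combinatorial lemma seems unavoidable here; one natural candidate is a sign-reversing involution applied to the expansion $[x_T] h_n(I) = \sum_{k \geq 1} (-1)^{k-1} \, \#\{\pi \in D_{[k-1]} : \pi(I) = T\}$ coming from \eqref{addln}, which would reduce the problem to a tractable count of descent-restricted permutations. The global invariant $p_n(I)(1, t, \ldots, t) = N_n(I)\,t^{s-1}$ from Proposition \ref{Nn} is by itself too coarse — the examples $I = \{2,3,4,7\}$ and $J = \{3,4,7,8\}$ in $[9]$ from the remarks after Corollary \ref{suffsupp} show that $N_n(I) = N_n(J)$ can hold without $J \in \{I, \tau_n(I)\}$ — but a richer family of specializations (for instance, setting successive blocks of variables equal to one another) should provide the extra rigidity required to separate non-$\tau_n$-equivalent subsets and to close the induction.
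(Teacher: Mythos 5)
The statement you are trying to prove is Conjecture \ref{conjpn}; the paper offers no proof of it (it is one of the equivalent forms of Conjecture \ref{conjtwinantiZ}, which is left open), and only verifies the case $s=1$ via the symmetry identity for binomial coefficients. So there is no paper argument to compare against, and the question is simply whether your proposal closes the gap. It does not: it is a strategy outline whose critical steps are conditional on claims you have not established. The central one is your extraction of $\min(k,l)$ from the largest variable index occurring in $p_n(I)$. By Theorem \ref{recformulapn} the variable $x_{n-\min(k,l)}$ indeed appears only in the $i=0$ (or $j=0$) leading term, so there is no cross-term cancellation to worry about; the real danger is that its coefficient is the single quantity $(-1)^{k+1}p_{n-k-1}(D)$ (resp.\ $p_{n-l-1}(D_0)$), and nothing prevents this from vanishing even though $p_n(I)\neq 0$ --- for instance $D$ may be a set fixed by the relevant involution of $[n-k-1]$ with $n-k-1$ even, in which case \eqref{pnreversal} forces $p_{n-k-1}(D)=0$ and your invariant $j_{\max}$ no longer reads off $\min(k,l)$. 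You flag this as needing "an auxiliary combinatorial lemma," but that lemma is precisely where the difficulty of the conjecture is concentrated, and the sign-reversing involution you gesture at is not exhibited.

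Two further steps are also incomplete. First, even granting that you recover $D$ up to its own involution $\tau_{n-k-1}$ from the inductive hypothesis, this does not immediately return $I$ up to $\tau_n$: the difference set of $\tau_n(I)$ is the \emph{reversed} gap sequence $\{a_d-a_{d-1},\ldots,a_d-a_1\}$, which is not the same subset of $[n-k-1]$ as $\tau_{n-k-1}(D)=\{n-k-(a_j-a_1)\}$ in general, so you must reconcile the two ambiguities (the unordered pair $\{k,l\}$ and the $\tau$-orbit of $D$) and show the only consistent preimages are $I$ and $\tau_n(I)$. Second, the $k=l$ case, where the two leading terms collide in $(-1)^{k+1}p_{n-k-1}(D)+p_{n-k-1}(D_0)$, is explicitly deferred, and the concluding appeal to "a richer family of specializations" is speculation rather than argument --- as you yourself note, the one specialization the paper provides, $p_n(I)(1,t,\ldots,t)=N_n(I)\,t^{s-1}$ from Proposition \ref{Nn}, is already known to be too coarse. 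The proposal is a reasonable plan of attack, but each of these three points is an open problem within the open problem, and none is resolved.
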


For $s = 1$, \textit{i.e.}, when $I = \{i\}$ and $J = \{j\}$, Conjecture \ref{conjpn} holds as a special case (for $r = n - 1$, $k = i - 1$ and $l = j - 1$)
of the fact that $\displaystyle \binom{r}{k} = \binom{r}{l}$ if and only if
$k = l$ or $k = r - l$. The "if" part, known as the {\em symmetry identity}, follows directly from the definition of binomial coefficients
and the "only if" part follows from the inequality $\displaystyle \binom{r}{k} <  \binom{r}{k+1}$ when $1 \leq k + 1 \leq \lfloor r/2 \rfloor$.
This is another indication showing that the Pascal descent polynomial $p_{n}(I)$ is indeed an extension of the usual notion
of the binomial coefficient.

\bigskip


\section{ Further Research}\label{futur}

Various equivalent forms of Conjecture \ref{conjtwinantiZ} on twin and anti-twin words - which is enough to prove on a
two-lettered alphabet - have been presented in this article and we strongly  believe that this will finally be resolved.
More precisely one can use the recursive formula of Proposition \ref{recl*shuffle} in a manner similar to the proofs of Theorem \ref{DuchThib}
and Theorem \ref{alphbound}, but as it turns out, a lot more cases in combinatorics on words appear in such a consideration.

By Remark \ref{propershuffle} and Reduction Theorem \ref{reducetoalphof2}, another equivalent conjecture which seems to be worth investigating 
is the following.
Suppose that $u$ and $v$ are words of common multi-degree and length $n > 1$ on a two-lettered alphabet which are not powers of
a single letter or palindromes of even length. Then the binomial $u - v$ (respectively $u + v$) can be expressed as a $K$-linear
combination of proper shuffles if and only if either $u = v$ or $n$ is odd and $u = \tilde{v}$
(respectively if $n$ is even and $u = \tilde{v}$). One is challenged to check this using one of the two well known bases of the shuffle algebra,
namely the triangular $\mathbb Z$-basis - originally due to Radford \cite{Radf} -
$\displaystyle {\mathcal Q}_{w} = \frac{1}{i_{1}! \ldots i_{k}!} l_{1}^{ \, {\sqcup \mspace{-3.0mu} \sqcup} i_{1}} \sh \cdots \sh \,
l_{k}^{ \, {\sqcup \mspace{-3.0mu} \sqcup} i_{k}}$, where $w = {l_{1}}^{i_{1}} \cdots {l_{k}}^{i_{k}}$ is the unique decreasing factorization
(with respect to the lexicographical order in $A^{+}$) of a non-Lyndon word $w$ as product of Lyndon words with
$l_{1} > \cdots > l_{k}; \, i_{1}, \ldots , i_{k} \geq 1$ (see \cite[\S 6.1]{Reut}) and the Lie polynomial $\mathbb Q$-basis
$\displaystyle {\bigoplus}_{n \geq 0} V_{n}$, where $V_{n}$ denotes the subspace of ${\mathbb Q}{\langle A \rangle}$ generated by the shuffle
products of $n$ Lie polynomials (see \cite[\S 6.5.1]{Reut}).

A completely different approach via Pascal descent polynomials is to use Theorem \ref{recformulapn} in order to be able to resolve
Conjecture \ref{conjpn}.
We do not yet know how or even if Theorem \ref{recformulapn} could lead to a complete solution of Problem \ref{pnmodm} (probably combined with
successive applications of our condition in Proposition \ref{Nn} involving the invariant $N_{n}(I)$) but we are certain that
the right framework for such a search is within the geometry of the Pascal triangle $\bmod \, m$,
which after all is needed even in the simple case where $|I| = 1$.

Finally, in the case where $m = 2$ - the smallest instance of Problem 5.5 - we have made some computations using the computer algebra
system \textsc{Gap 4} and have obtained all solutions up to $n=12$. (A list of those solutions up to $n =10$ is presented in the
following Appendix.)


\begin{appendix}
\section{Solutions of Problem \ref{pnmodm} for $\mathbf{m = 2}$ and $\mathbf{n \leq 10}$}

For $3 \leq n \leq 10$ and $1 \leq s \leq \lfloor n/2 \rfloor$ we list all subsets $I$ of $[n]$ of cardinality $s$ with the property that
$p_{n}(I) \, \equiv  \, 0 \, (\bmod \, 2 )$. By Corollary \ref{suffsupp}\,(i) there are no solutions 
when $n$ is a power of $2$ and $s$ is odd.

\begin{align*}
\bf{n}& \bf{= 3} & \bf{s}& \bf{ = 1} & : &  \quad \{2\} \\
      &          &       &           &   & \\
\bf{n}& \bf{= 4} & \bf{s}& \bf{ = 2} & : &  \quad \{1,3\}, \, \{1,4\}, \, \{2,3\}, \, \{2,4\} \\
      &          &       &           &   & \\
\bf{n}& \bf{= 5} & \bf{s}& \bf{ = 1} & : &  \quad \{2\}, \, \{3\}, \, \{4\} \\
      &          & \bf{s}& \bf{ = 2} & : &  \quad \{1,5\}, \, \{2,4\} \\
      &          &       &           &   & \\
\bf{n}& \bf{= 6} & \bf{s}& \bf{ = 1} & : &  \quad \{3\}, \, \{4\}  \\
      &          & \bf{s}& \bf{ = 2} & : &  \quad \{1,5\}, \, \{1,6\}, \, \{2,5\}, \, \{2,6\}, \, \{3,4\} \\
      &          & \bf{s}& \bf{ = 3} & : &  \quad \{1,3,5\}, \, \{1,3,6\}, \, \{1,4,6\}, \, \{2,3,5\}, \,
                                                  \{2,4,5\}, \, \{2,4,6\} \\
      &          &       &           &   & \\
\bf{n}& \bf{= 7} & \bf{s}& \bf{ = 1} & : &  \quad \{2\}, \, \{4\}, \, \{6\} \\
      &          & \bf{s}& \bf{ = 2} & : &  \quad \{1,5\}, \, \{1,7\}, \, \{2,4\}, \, \{2,6\}, \, \{3,5\}, \, \{3,7\}, \, \{4,6\} \\
      &          & \bf{s}& \bf{ = 3} & : &  \quad \{1,3,6\}, \, \{1,4,7\}, \, \{2,4,6\}, \, \{2,5,7\}, \, \{3,4,5\} \\
      &          &       &           &   & \\
\bf{n}& \bf{= 8} & \bf{s}& \bf{ = 2} & : &  \quad \{1,5\}, \, \{1,7\}, \, \{1,8\}, \, \{2,5\}, \, \{2,6\}, \, \{2,7\}, \, \{2,8\}, \,
                                                  \{3,5\}, \, \{3,6\}, \, \{3,7\}, \\
      &          &       &           &   &  \quad \{4,5\}, \, \{4,6\},  \, \{4,7\}, \, \{4,8\} \\
      &          & \bf{s}& \bf{ = 4} & : &  \quad \{1,2,7,8\}, \, \{1,3,5,7\}, \, \{1,3,5,8\}, \, \{1,3,6,8\}, \, \{1,4,5,8\}, \,
                                                  \{1,4,6,8\}, \, \{2,3,5,7\}, \\
      &          &       &           &   &  \quad \{2,3,6,7\}, \, \{2,4,5,7\}, \, \{2,4,6,7\}, \, \{2,4,6,8\}, \,                                                  \{3,4,5,6\} \\
      &          &       &           &   & \\
\bf{n}& \bf{= 9} & \bf{s}& \bf{ = 1} & : &  \quad \{2\}, \, \{3\}, \, \{4\}, \, \{5\}, \, \{6\}, \, \{7\}, \, \{8\} \\
      &          & \bf{s}& \bf{ = 2} & : &  \quad \{1,9\}, \, \{2,4\}, \, \{2,6\}, \, \{2,8\}, \, \{3,6\}, \, \{3,7\}, \, \{4,6\}, \,
                                                  \{4,7\}, \, \{4,8\}, \, \{6,8\} \\
      &          & \bf{s}& \bf{ = 3} & : &  \quad \{1,5,9\}, \, \{2,4,6\}, \, \{2,4,8\}, \, \{2,5,7\}, \, \{2,5,8\}, \,
                                                  \{2,6,8\}, \, \{3,5,7\}, \, \{3,5,8\}, \\
      &          &       &           &   &  \quad \{4,5,6\}, \, \{4,6,8\} \\
      &          & \bf{s}& \bf{ = 4} & : &  \quad \{1,2,8,9\}, \, \{1,3,5,9\}, \, \{1,3,6,9\}, \, \{1,3,7,9\}, \, \{1,4,6,9\}, \,
                                                  \{1,4,7,9\}, \, \{1,5,7,9\}, \\
      &          &       &           &   &  \quad \{2,3,4,8\}, \, \{2,3,7,8\}, \, \{2,4,6,8\}, \, \{2,6,7,8\}, \, \{3,4,6,7\} \\
      &          &       &           &   &  \\
\bf{n}& \bf{= 10} & \bf{s}& \bf{ = 1} & : &  \quad \{3\}, \, \{4\}, \, \{5\}, \, \{6\}, \, \{7\}, \, \{8\} \\
      &           & \bf{s}& \bf{ = 2} & : &  \quad \{1,9\}, \, \{1,10\}, \, \{2,9\}, \, \{2,10\}, \, \{3,7\}, \, \{3,8\}, \, \{4,7\}, \,
                                                  \{4,8\}, \, \{5,6\} \\
      &           & \bf{s}& \bf{ = 3} & : & \quad \{1,5,9\}, \, \{2,5,9\}, \, \{2,6,9\}, \, \{2,6,10\}, \, \{3,5,7\}, \,
                                                  \{4,5,7\}, \, \{4,6,7\}, \, \{4,6,8\} \\
      &           & \bf{s}& \bf{ = 4} & : & \quad \{1,2,9,10\}, \, \{1,3,5,9\}, \, \{1,3,6,9\}, \, \{1,3,6,10\}, \, \{1,3,7,9\}, \,
                                                  \{1,3,8,10\}, \\
      &           &       &           &   & \quad \{1,4,6,9\}, \, \{1,4,6,10\}, \, \{1,4,7,10\}, \, \{1,5,6,10\}, \, \{1,5,7,9\}, \,
                                                  \{1,5,7,10\},\\
      &          &      &        &   & \quad \{1,5,8,10\}, \, \{2,3,5,9\}, \, \{2,3,7,9\}, \, \{2,3,8,9\}, \, \{2,4,5,9\}, \, \{2,4,6,9\}, \\                                                  &          &      &        &   & \quad \{2,4,6,10\}, \, \{2,4,7,9\}, \, \{2,4,8,9\}, \, \{2,4,8,10\}, \, \{2,5,6,9\}, \, \{2,5,7,9\},\\      &          &      &        &   & \quad \{2,5,7,10\}, \, \{2,5,8,10\}, \, \{2,6,7,9\}, \, \{2,6,8,9\}, \, \{2,6,8,10\}, \, \{3,4,7,8\},\\
      &          &      &        &   & \quad \{3,5,6,8\}, \, \{4,5,6,7\} \\
      &           & \bf{s}& \bf{ = 5} & : & \quad \{1,3,4,5,9\}, \, \{1,3,4,6,10\}, \, \{1,3,5,7,9\}, \, \{1,3,5,7,10\}, \, \{1,3,5,8,10\}, \\
      &          &      &        &   & \quad \{1,3,6,8,10\}, \, \{1,4,6,8,10\}, \, \{1,5,7,8,10\}, \, \{2,3,4,6,9\}, \, \{2,3,5,7,9\}, \\
      &          &      &        &   & \quad \{2,4,5,7,9\}, \, \{2,4,6,7,9\}, \, \{2,4,6,8,9\}, \, \{2,4,6,8,10\}, \, \{2,5,7,8,9\}, \\
      &          &      &        &   & \quad \{2,6,7,8,10\}.
\end{align*}
\end{appendix}

{\bf Acknowledgements}

\smallskip

The author wishes to thank professor G. Duchamp for helpful discussions on this subject - in
particular for pointing out Problems 1.3 and 1.4 on twin and anti-twin words - during the author's visit at \textsc{Lipn}
(Laboratoire d' Informatique de Paris-Nord) at the University of Paris XIII.
He is also indebted to professor A. Konovalov for his advice on computations using \textsc{Gap 4} to implement
the action of $l_n$ on subsets of $[n]$.

\bigskip
\bigskip



\begin{thebibliography}{99}
\bibitem{Diek + Rozen} V. Diekert and G. Rozenberg (editors), \textsl{The Book of Traces}, World Scientific, 1995.

\bibitem{Duch + Laug + Luqu} G. Duchamp, \'{E}. Laugerotte, J.-G. Luque,
\textsl{On the support of graph Lie algebras}, Theoret. Comput. Sci. 273 (2002) 283-294.

\bibitem{Duch + Thib} G. Duchamp, J.-Y. Thibon, \textsl{Le support de l'alg\`{e}bre de Lie libre},
Discrete Math. 76 (1989) 123-132.

\bibitem{Fine} N. J. Fine, \textsl{Binomial coefficients modulo a prime}, The American Mathematical Monthly, 54, no. 10, part 1, (1984) 589-592.

\bibitem{Grah + Knut + Pata} R. Graham, D. Knuth, O. Patashnik, \textsl{Concrete Mathematics : a foundation for
computer science}, 2nd ed., Addison-Wesley, 1994.

\bibitem{Kumm} E. E. Kummer, \textsl{\"Uber die Erg\"{a}nzungss\"{a}tze den allgemeinen Reciprocit\"{a}tsgesetzen},
J. Reine Angew. Math., 44 (1852) 93-146. Reprinted in his Collected Papers, Vol. 1, 485-538.

\bibitem{Loth} M. Lothaire, \textsl{Combinatorics on Words}, Encyclopedia of Mathematics and its Applications,
Vol. 17, Addison-Wesley, Reading, 1983.

\bibitem{Luca} E. Lucas, \textsl{Sur les congruences des nombres eul\'{e}riens et des coefficients differentiels des fonctions
trigonom\'{e}triques, suivant un module premier}, Bulletin de la Soci\'{e}t\'{e} math\'{e}matique de France, 6 (1877) 49-54.

\bibitem{Radf} D. E. Radford, \textsl{A natural ring basis for the shuffle algebra and an application to group schemes},
Jour. of Algebra 58 (1979) 432-454.

\bibitem{Ree} R. Ree, \textsl{Lie elements and algebra associated with shuffles}, Ann. Math. 68 (1958) 210-220.

\bibitem{Reut} C. Reutenauer, \textsl{Free Lie Algebras}, London Mathematical Society New Series, Vol. 7,
Oxford University Press, London, 1993.

\bibitem{Saga} B. E. Sagan, \textsl{The Symmetric Group: Representations, Combinatorial Algorithms,
and Symmetric Functions}, Second Edition, Graduate Texts in Mathematics, Vol. 203,
Springer-Verlag, New York, 2001.

\bibitem{Scho} M. Schocker, \textsl{The descent algebra of the symmetric group},
Representations of finite dimensional algebras and related topics in Lie theory and geometry, Fields Inst. Comm. 40 (2004) 145-161.

\bibitem{Wolf} S. Wolfram, \textsl{Geometry of binomial coefficients}, The American Mathematical Monthly, 91, no. 9, (1984) 566-571.
\end{thebibliography}
\end{document}